\begin{document}
\baselineskip 18pt
\hfuzz=6pt

\newtheorem{theorem}{Theorem}[section]
\newtheorem{prop}[theorem]{Proposition}
\newtheorem{lemma}[theorem]{Lemma}
\newtheorem{definition}[theorem]{Definition}
\newtheorem{cor}[theorem]{Corollary}
\newtheorem{example}[theorem]{Example}
\newtheorem{remark}[theorem]{Remark}
\newcommand{\ra}{\rightarrow}
\newcommand{\ccc}{{\mathcal C}}
\newcommand{\one}{1\hspace{-4.5pt}1}

\newcommand{\wh}{\widehat}
\newcommand{\f}{\frac}
\newcommand{\df}{\dfrac}
\newcommand{\sgn}{\textup{sgn\,}}
 \newcommand{\rn}{\mathbb R^n}
  \newcommand{\si}{\sigma}
  \newcommand{\ga}{\gamma}
   \newcommand{\nf}{\infty}
\newcommand{\p}{\partial}
\newcommand{\De}{\Delta}

\newcommand{\norm}[1]{\left\|{#1}\right\|}%
\newcommand{\supp}{\operatorname{supp}}

\newcommand{\tf}{\tfrac}
\newcommand{\qq}{\quad\quad}
\newcommand{\lab}{\label}
\newcommand{\zzz}{\mathbf Z}
\newcommand{\li}{L^{\infty}}
\newcommand{\intrn}{\int_{\rn}}
\newcommand{\qqq}{\quad\quad\quad}
\numberwithin{equation}{section}


\newcommand{\vp}{\varphi}
\newcommand{\al}{\alpha}
\newcommand{\R}{\RR}
\newcommand{\intr}{\int_{\R}}
\newcommand{\intrr}{\int_{\R^2}}
\newcommand{\de}{\delta}
\newcommand{\om}{\omega}
\newcommand{\Tht}{\Theta}
\newcommand{\tht}{\theta}
\newcounter{question}
\newcommand{\qt}{%
        \stepcounter{question}%
        \thequestion}
\newcommand{\bq}{\fbox{Q\qt}\ }
\renewcommand{\wp}{\Psi}
\newcommand{\sgg}{\si_{gg}}
\newcommand{\sbtj}{\si_{b2j}}
\newcommand{\sbrk}{\si_{b3k}}
\newcommand{\sbzg}{\si_{b_0g}}

\newcommand{\abs}[1]{\left\vert #1\right\vert}%

\newcommand{\be}{\beta}%

\def\RR{\mathbb R}
\def\bbr{\mathbb R}

\title[Multilinear Multiplier Theorems]
{Multilinear Multiplier Theorems and Applications}

\author{Loukas Grafakos}

\address{Department of Mathematics, University of Missouri, Columbia MO 65211, USA}
\email{grafakosl@missouri.edu}

\author{Danqing He}

\address{Department of Mathematics, Sun Yat-sen  University, Guangzhou, 510275, P. R. China}
\email{hedanqing@mail.sysu.edu.cn}

\author{Hanh Van Nguyen}

\address{Department of Mathematics, University of Alabama, Tuscaloosa, AL 35487, USA}
\email{hvnguyen@ua.edu}

\author{Lixin Yan}

\address{Department of Mathematics, Sun Yat-sen  University, Guangzhou, 510275, P. R. China}
\email{mcsylx@mail.sysu.edu.cn}

\thanks{{\it Mathematics Subject Classification:} Primary   42B15. Secondary 42B25}
\thanks{{\it Keywords and phases:} Multilinear   operators, multiplier operators,   Calder\'on commutators.}
\thanks{The first three authors would like to acknowledge the support of Simons Foundation.
The fourth author was supported by
 NNSF of China (Grant No. 11371378, 11471338 and 11521101).}

\begin{abstract}
We obtain new  multilinear multiplier theorems for  symbols  of restricted smoothness  which lie locally in certain Sobolev spaces.  We provide applications concerning the boundedness of the commutators of Calder\'on and Calder\'on-Coifman-Journ\'e. 
\end{abstract}

\maketitle


\bigskip

 \section{Introduction}
\setcounter{equation}{0}

The  theory of multilinear multipliers has made significant advances in recent years. An $n$-dimensional
$m$-linear multiplier  is a bounded function $\si$ on $(\rn)^m $ associated with an $m$-linear
operator
$T_\si$ on $\rn\times \cdots\times \rn$ in the following way:
\begin{equation}\label{Ts}
T_\si(f_1,\dots , f_m)(x) =
\int_{(\rn)^m } \!\!\!\wh{f_1}(\xi_1) \cdots \wh{f_m}(\xi_m) \si(\xi_1,\dots , \xi_m) e^{2\pi i x\cdot (\xi_1+\cdots +\xi_m)}
d\xi_1\!\cdots \!d\xi_m,
\end{equation}
where $f_j$, $j=1,\dots , m$, are Schwartz functions in $\rn$, and
$\wh{f_j}(\xi_j) = \int_{\rn} f_j(x) e^{-2\pi i x\cdot \xi_j} dx$ is the
Fourier transform of $f_j$.  A classical result of Coifman and Meyer \cite{CM-G, CM2} says that  if
for all sufficiently large multiindices $\al_1,\dots , \al_m\in (\mathbb Z^+\cup \{0\})^n$ we have
\begin{equation}\label{mi}
\Big|  \p_{ \xi_1}^{\al_1}  \cdots  \p_{ \xi_m}^{\al_m}  \si (\xi_1,\dots , \xi_m) \Big| \lesssim
(|\xi_1|+\cdots +|\xi_m|)^{-(|\al_1|+\cdots + |\al_m|)}
\end{equation}
for all $(\xi_1,\dots , \xi_m)  \in (\rn)^m \setminus \{(0,\dots ,0)\}$, then $T_\si$ admits a bounded extension from
$L^{p_1}(\rn) \times \cdots \times L^{p_m}(\rn)$ to $L^p(\rn)$
when $1<p_1,\dots , p_m\le \nf$, $1/p=1/p_1+\cdots +1/p_m$, and
$1\le p<\nf$. The extension of this theorem to indices $p>1/m$ was simultaneously obtained by
Kenig and Stein \cite{KS} (when $m=2$) and Grafakos and Torres \cite{GT2}.
 This theorem provides an $m$-linear extension of Mikhlin's classical
linear multiplier result \cite{Mi}. H\"ormander \cite{Ho} obtained an improvement of Mikhlin's theorem
showing that when $m=1$, $T_\si$ maps
$L^{p_1}(\rn)$ to $L^{p_1}(\rn)$, $1<{p_1}<\nf$    under the       weaker condition
\begin{equation}\label{ho}
\sup_{j\in \mathbb Z}\big\| (I-\De)^{s/2} \big(\si(2^j \cdot ) \wh{\Psi}\big) \big\|_{L^2(\rn)} <\nf,
\end{equation}
where $s>n/2$ and $\wh{\Psi}$ is a smooth function   supported in an annulus centered at the origin.
Here $\De$ is the Laplacian and
$(I-\De)^{s/2}$ is an operator given on the Fourier transform side by multiplication with $(1+4\pi^2 |\xi|^2)^{s/2}$.
H\"ormander's theorem was  extended to $L^r$-based Sobolev spaces and to indices ${p_1}\le 1$, with
$L^{p_1}$ replaced by the Hardy space $H^{p_1}$, by Calder\'on and Torchinsky \cite{CT}.

The adaptation of    H\"ormander's theorem   to the multilinear setting was first obtained by Tomita \cite{To}. This
theorem was later extended by Grafakos and Si \cite{GrSi} to the range $p<1$ by replacing $L^2$-based Sobolev spaces
by $L^r$-based Sobolev spaces. The endpoint cases where some $p_j$ are equal to infinity were treated by
Grafakos, Miyachi, and Tomita \cite{GrMiTo}.
Fujita and Tomita \cite{FuTo} provided weighted  extensions of these results and also noticed that the operator
$(I-\De)^{s/2}$ in $(\rn)^m$ can be replaced by 
$(I-\De_{\xi_1})^{s_1/2}\cdots (I-\De_{\xi_m})^{s_m/2}$, where $\De_{\xi_j}$ is
the Laplacian in the $\xi_j$th variable.
The bilinear version of the Calder\'on and Torchinsky theorem
was proved by Miyachi and Tomita \cite{MiTo}, while the $m$-linear version (for general $m$) was  proved by
Grafakos and Nguyen \cite{GrNg} and Grafakos, Miyachi, Nguyen, and Tomita \cite{GrMiNgTo}.

To study certain   multilinear singular integrals, such as multicommutators,  
there is a need for a multilinear multiplier theorem  that can handle  symbols on $(\rn)^m$ which, for instance, have
one derivative in each variable  but no   two derivatives in a given variable.
We notice that  in the case where   $s_{j} $ are positive integers for all $j $, replacing
$(I-\De)^{s/2}$  on $(\rn)^m$ by $(I-\De_{\xi_1})^{s_1/2}\cdots (I-\De_{\xi_m})^{s_m/2}$,  as in Fujita and Tomita \cite{FuTo},
   reflects the following decay condition for the derivatives of $\si$
\begin{equation}\label{GG1}
\big|\p_{\xi_{1}}^{\be_{1 }}
\p_{\xi_{2 }}^{\be_{2 }}
\cdots \p_{\xi_{m }}^{\be_{m }}  \si (\xi_1,\dots , \xi_m) \big| \lesssim (|\xi_1|+\cdots +|\xi_m|)^{-\sum_{j=1}^m
 |  \be_{j }|} ,
\end{equation}
where each multiindex $\be_j$ satisfies  $|\be_j| \le s_j$. In this case a given coordinate of $\xi_j$ could be differentiated 
as many as $s_j$ times.
In this article we study multipliers that satisfy the following  coordinate-wise version of \eqref{GG1} 
\begin{equation}\begin{split}\label{GG2}
\big|\p_{\xi_{11}}^{\be_{11}}  \cdots \p_{\xi_{1n}}^{\be_{1n}}
\p_{\xi_{21}}^{\be_{21}}  \cdots  \p_{\xi_{2n}}^{\be_{2n}}
\cdots \p_{\xi_{m1}}^{\be_{m1}} &\cdots \p_{\xi_{mn}}^{\be_{mn}} \si (\xi_1,\dots , \xi_m) \big|
\\
& \lesssim (|\xi_1|+\cdots +|\xi_m|)^{-\sum_{j=1}^m
\sum_{\ell =1}^n  \be_{j\ell}},
\end{split} \end{equation}
where    $\xi_j=(\xi_{j1}, \dots , \xi_{jn})$ and each $  \be_{j\ell}$ is at most $ s_j /n$.
  Condition \eqref{GG2}
  weakens the  Coifman-Meyer hypothesis \eqref{mi} and also \eqref{GG1}  in the sense that
  it does not allow any one-dimensional variable to be differentiated more
   than an appropriate number of  times.

We now state our first main result  concerning the operator $T_\si$ in \eqref{Ts}. Here and throughout the 
$i$th coordinate of the vector  $\xi_j$ in $\R^n$ is denoted by $\xi_{ji}$. 
We denote partial derivatives in the $\xi_{ji}$ variable   by $\p_{\xi_{ji}}$. 
Also the Laplacian $\De_{\xi_j} $ on $\R^n$ is given by $ \p_{\xi_{j1}}^2+\cdots + \p_{\xi_{jn}}^2$. 
We have a result that extends condition \eqref{GG2} 
in the Sobolev space setting. 
We define
$(I-\p_{\xi_{i\ell}}^2)^{\f{\ga_{i\ell}}{2}}f(\xi)
$
as the linear operator $ ((1+4\pi^2|\eta_{i\ell}|^2)^{\f{\ga_{i\ell}}{2}}\wh f(\eta))^{\vee}(\xi)$ 
related to the multiplier
$(1+4\pi^2|\eta_{i\ell}|^2)^{\f{\ga_{i\ell}}{2}}$.

\begin{theorem}\label{1dil}
Suppose that  $ 1 \le r\le 2$   and $  \ga_{i\ell}  >1/r$ for all $1\le i \le m$ and $1\le \ell \le n$. Let 
$\sigma$ be a bounded function on $\mathbb{R}^{mn}$ such that
\begin{equation}\label{dilationj}
\sup_{j\in\mathbb{Z}}\bigg\|{ \prod_{\substack{ 1\le i \le m \\ 1\le \ell \le n  }} (I-\p_{\xi_{i\ell}}^2)^{\f{\ga_{i\ell}}{2}}  \big[ \sigma(2^j\cdot)\widehat{\Psi} \big]}\bigg\|_{L^r (\mathbb{R}^{mn})}=A<\infty,
\end{equation}
where $\widehat{\Psi}$ is a smooth function  supported in the annulus $\frac{1}{2}\le |(\xi_1,\dots , \xi_m) |\le 2$
in   $\mathbb R^{mn}$  that satisfies
$$
\sum_{j\in\mathbb{Z}}\widehat{\Psi}(2^{-j}(\xi_1,\dots , \xi_m) )=1, \qquad \textup{for
all $ (\xi_1,\dots , \xi_m) \in (\mathbb R^n)^m\setminus\{0\}.$}
$$
If $1<p_i<\infty$, $i=1,\dots , m$, satisfy $\max\limits_{1\le i\le m} \max\limits_{1\le \ell\le n}   \frac{1}{\gamma_{i\ell}}  < \min\limits_{1\le i \le m} p_i $   and $\frac{1}{p}=\frac{1}{p_1}+\cdots+\frac{1}{p_m} $, then   we have
\begin{equation}\label{equ:TSigmaEST}
\norm{T_{\sigma}}_{L^{p_1}(\mathbb{R} ^{n})\times\cdots\times L^{p_m}(\mathbb{R} ^{n})\to L^p(\mathbb{R} ^{n})}\lesssim A.
\end{equation}
\end{theorem}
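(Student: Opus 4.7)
I would adapt the Fourier-series strategy of Tomita~\cite{To} and its refinements in Grafakos--Si~\cite{GrSi} and Grafakos--Miyachi--Nguyen--Tomita~\cite{GrMiNgTo} to the coordinate-wise smoothness encoded in \eqref{dilationj}. The plan begins with the standard dyadic decomposition $\si=\sum_{j\in\zzz}\si_j$ with $\si_j(\xi)=\si(\xi)\wh{\Psi}(2^{-j}\xi)$, each piece supported in the annulus $\{|\xi|\sim 2^j\}$, so that $T_\si=\sum_j T_{\si_j}$. Since $\si(2^j\cdot)\wh{\Psi}$ is compactly supported on a fixed cube $[-L,L]^{mn}$, one can expand it as a multiple Fourier series
\[
\si(2^j\eta)\wh{\Psi}(\eta)=\sum_{k\in\zzz^{mn}} c_{k,j}\,e^{2\pi i k\cdot\eta/(2L)},
\]
with coefficients indexed by $k=(k_{i\ell})$, $1\le i\le m$, $1\le \ell\le n$. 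Iterated integration by parts in each coordinate $\eta_{i\ell}$ separately, combined with the Hausdorff--Young inequality applied variable by variable (this is where the constraint $1\le r\le 2$ enters), yields the product-type coefficient estimate
\[
\bigg(\sum_{k\in\zzz^{mn}}\Big[|c_{k,j}|\prod_{i,\ell}(1+|k_{i\ell}|)^{\ga_{i\ell}}\Big]^{r'}\bigg)^{1/r'}\lesssim A,
\]
uniformly in $j$. This product estimate, rather than an isotropic one, is the key gain afforded by~\eqref{dilationj}.

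Next I would choose $\wh{\Tht}$ smooth with compact support in a slightly wider annulus and equal to $1$ on the support of $\wh{\Psi}$, and rewrite
\[
T_{\si_j}(f_1,\dots,f_m)(x)=\sum_{k}c_{k,j}\,U^{(j,k)}(f_1,\dots,f_m)(x),
\]
where $U^{(j,k)}$ is the $m$-linear operator with multiplier $\wh{\Tht}(2^{-j}\xi)e^{2\pi i k\cdot 2^{-j}\xi/(2L)}$, whose kernel is a translation by $2^{-j}k_i/(2L)$ in the $i$-th spatial variable of a Schwartz kernel at scale $2^{-j}$. To sum over $j$, I would insert auxiliary Littlewood--Paley decompositions $\Delta_{j_i}^{(i)}$ in each $\xi_i$ variable, use that the annulus support of $\wh{\Tht}(2^{-j}\cdot)$ forces $\max_i j_i\sim j$, and symmetrize; the factor with the largest frequency would be controlled by its Littlewood--Paley square function via Fefferman--Stein's vector-valued inequality, while the remaining $m-1$ factors would be estimated by the Hardy--Littlewood maximal operator. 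This would produce a bound of the form
\[
\bigg\|\sum_{j\in\zzz}U^{(j,k)}(f_1,\dots,f_m)\bigg\|_{L^p}\lesssim \prod_{i,\ell}(1+|k_{i\ell}|)^{\al_{i\ell}}\prod_{i=1}^m\|f_i\|_{L^{p_i}},
\]
with polynomial growth exponents $\al_{i\ell}$ that can be taken arbitrarily close to $1/p_i$ by tracking shifted maximal functions coordinate by coordinate.

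Combining the two estimates and summing over $k$ via H\"older's inequality with exponents $(r,r')$, the resulting double series converges provided $\ga_{i\ell}-\al_{i\ell}>1/r$ for every pair $(i,\ell)$, which translates precisely into the two hypotheses $\ga_{i\ell}>1/r$ and $\max_{i,\ell}1/\ga_{i\ell}<\min_i p_i$ in the theorem. The main obstacle is the second step: establishing the polynomial-growth $L^p$ bound for $\sum_j U^{(j,k)}$ while tracking each coordinate $k_{i\ell}$ separately. The difficulty is that $\wh{\Tht}$ is not of tensor-product form in the variables $\xi_1,\dots,\xi_m$, so the Littlewood--Paley reduction must be performed delicately, one frequency variable at a time, and the shifted kernel must be dissected coordinate by coordinate in order to produce the product-type growth in $k$ that matches the product coefficient decay coming from~\eqref{dilationj}. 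Once this careful bookkeeping is done, the coordinate-wise character of the Sobolev hypothesis is perfectly matched by the product-type maximal function bounds, which is what allows the threshold for each $\ga_{i\ell}$ to stay at the one-dimensional value $1/r$ rather than the full-dimensional Sobolev exponent $mn/r$.
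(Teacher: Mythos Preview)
Your proposal follows a genuinely different route from the paper. The paper does \emph{not} expand $\sigma(2^j\cdot)\widehat\Psi$ in a Fourier series at all. Instead, after the same cone decomposition into pieces $\sigma_1=\sigma\Phi_{m,m-1}$ (largest variable dominates) and $\sigma_2=\sigma\Psi_{m,m-1}$ (two largest variables comparable), it writes $\Delta_j^\theta T_{\sigma_1}(\vec f\,)(x)$ as a kernel integral, multiplies and divides by the product weight
\[
w_{\vec\gamma}(y_1,\dots,y_m)=\prod_{i=1}^m\prod_{\ell=1}^n(1+4\pi^2|y_{i\ell}|^2)^{\gamma_{i\ell}/2},
\]
and applies H\"older with a single exponent $\rho$ chosen so that $\max_{i,\ell}1/\gamma_{i\ell}<\rho<\min(p_1,\dots,p_m,r)$. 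One factor is controlled by Hausdorff--Young ($1\le\rho\le 2$) and Lemma~\ref{XL1}, producing exactly the Sobolev norm $A$; the other factor is a product of integrals each dominated by $\mathcal M(|f_i|^\rho)^{1/\rho}$ (strong maximal function) because $\rho\gamma_{i\ell}>1$ in every coordinate. There are no shifts, no Fourier coefficients, and no sum over a lattice $k$: the pointwise bound $|\Delta_j^\theta T_{\sigma_1}(\vec f\,)|\lesssim A\prod_{i<m}\mathcal M(|f_i|^\rho)^{1/\rho}\,\mathcal M(|\Delta_j^\eta f_m|^\rho)^{1/\rho}$ follows in one stroke, and the Littlewood--Paley and Fefferman--Stein inequalities finish the job.

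Your approach, by contrast, has a quantitative gap in the final accounting. You claim growth exponents $\alpha_{i\ell}$ close to $1/p_i$ for $\big\|\sum_j U^{(j,k)}\big\|$ and then require $\gamma_{i\ell}-\alpha_{i\ell}>1/r$ for summability in $k$. But that combination forces $\gamma_{i\ell}>1/p_i+1/r$, which is \emph{strictly stronger} than the hypothesis $\gamma_{i\ell}>\max(1/r,\max_{i'}1/p_{i'})$; for instance with $r=p_1=\cdots=p_m=2$ you would need $\gamma_{i\ell}>1$ whereas the theorem allows any $\gamma_{i\ell}>1/2$. So the two conditions do not ``translate precisely'' as you assert. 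Moreover, the mechanism you invoke to obtain $\alpha_{i\ell}\approx 1/p_i$ --- shifted maximal functions coordinate by coordinate --- is fragile: the shifts $2^{-j}k_i$ are $j$-dependent, and there is no inequality of the form $\mathcal M g(x+a)\lesssim C(a)\,\mathcal M g(x)$, so controlling the square function in $j$ after the $k$-expansion is exactly the point where the Fourier-series route runs into trouble in this setting. The paper's weighted-kernel argument avoids this entirely because the single parameter $\rho$ simultaneously encodes both hypotheses $\gamma_{i\ell}>1/r$ (so that $\rho\le r$ and Lemma~\ref{XL1} applies) and $\gamma_{i\ell}>1/\min p_i$ (so that $\rho<p_i$ and $\mathcal M(|\cdot|^\rho)^{1/\rho}$ is bounded on $L^{p_i}$).
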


Taking $\ga_{i\ell}=\ga_i/n$ for all $\ell \in \{1,\dots , n\}$ and using simple embeddings between Sobolev spaces we   
deduce the following consequence of  Theorem~\ref{1dil}.

\begin{cor}\label{cor1}
Let  $ 1 \le r \le 2$   and suppose that  $\ga_{i} >n/r$ for all $i=1,\dots , m$. Let 
$\sigma$ be a bounded function on $\mathbb{R}^{mn}$ such that
\begin{equation}\label{dilationjj}
\sup_{j\in\mathbb{Z}}
\Big\|{   (I- \De_{\xi_1})^{\f{\ga_1}{2} } \cdots  (I- \De_{\xi_m})^{\f{\ga_m}{2} } 
 \big[ \sigma(2^j\cdot)\widehat{\Psi} \big]}\Big\|_{L^r (\mathbb{R}^{mn})}=A<\infty,
\end{equation}
where $\Psi$ is as in Theorem~\ref{1dil}. Then \eqref{equ:TSigmaEST}  holds where $p_i$ are as in 
Theorem~\ref{1dil}. 
\end{cor}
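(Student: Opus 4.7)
The plan is to deduce the corollary directly from Theorem~\ref{1dil} by the specialization $\gamma_{i\ell} = \gamma_i/n$ for every $\ell \in \{1,\dots,n\}$. Under this choice, the assumption $\gamma_i > n/r$ immediately gives $\gamma_{i\ell} > 1/r$ as required by Theorem~\ref{1dil}, and the constraint $\max_{i,\ell} 1/\gamma_{i\ell} = \max_i n/\gamma_i < \min_i p_i$ is precisely the condition on $p_i$ inherited from Theorem~\ref{1dil}. It therefore suffices to prove, uniformly in $j \in \mathbb{Z}$, the $L^r$-bound
\begin{equation*}
\Big\| \prod_{\substack{1\le i \le m \\ 1\le \ell \le n}} (I-\p_{\xi_{i\ell}}^2)^{\gamma_i/(2n)}\big[\sigma(2^j\cdot)\widehat{\Psi}\big] \Big\|_{L^r(\mathbb{R}^{mn})}
\lesssim
\Big\| \prod_{i=1}^m (I-\Delta_{\xi_i})^{\gamma_i/2}\big[\sigma(2^j\cdot)\widehat{\Psi}\big] \Big\|_{L^r(\mathbb{R}^{mn})},
\end{equation*}
after which the corollary follows by invoking Theorem~\ref{1dil}.

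The displayed inequality is a form of the standard Sobolev embedding and can be recast as the $L^r(\mathbb{R}^{mn})$-boundedness of the Fourier multiplier with symbol
$$
m(\eta_1, \dots, \eta_m) \,=\, \prod_{i=1}^m \frac{\prod_{\ell=1}^n (1+4\pi^2 \eta_{i\ell}^2)^{\gamma_i/(2n)}}{(1+4\pi^2 |\eta_i|^2)^{\gamma_i/2}}.
$$
Since $m$ factors into independent symbols on the $m$ disjoint blocks of $n$ variables, it is enough to show that each block symbol is a bounded multiplier on $L^r(\mathbb{R}^n)$. The elementary AM-GM inequality $\prod_\ell (1+\eta_\ell^2) \le (1+|\eta|^2)^n$ gives an $L^\infty$ bound of order one, and a direct computation shows that each block symbol satisfies a Mikhlin-type condition: its $\beta$-th derivative decays like $(1+|\eta|)^{-|\beta|}$. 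By the classical Mikhlin multiplier theorem this settles the range $1 < r < \infty$.

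The main obstacle is the endpoint $r=1$, where Mikhlin's theorem fails. Here, the strong smoothness hypothesis $\gamma_i > n$ can be exploited via the subordination identity $(1+4\pi^2|\eta|^2)^{-\gamma_i/2} = \tfrac{1}{\Gamma(\gamma_i/2)}\int_0^\infty t^{\gamma_i/2-1}e^{-t(1+4\pi^2|\eta|^2)}dt$, which expresses each block symbol as an integral of products of Gaussian-type symbols whose inverse Fourier transforms are Schwartz functions with $L^1$-norms uniformly integrable against $t^{\gamma_i/2-1}e^{-t}\,dt$. This places the associated convolution kernel in $L^1(\mathbb{R}^n)$, yielding $L^1 \to L^1$ boundedness via Young's inequality. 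Once the multiplier estimate is established in all cases, applying it with $G = \sigma(2^j\cdot)\widehat{\Psi}$ and taking the supremum over $j \in \mathbb{Z}$ shows that the quantity $A$ from Theorem~\ref{1dil} is dominated by \eqref{dilationjj}, and \eqref{equ:TSigmaEST} follows.
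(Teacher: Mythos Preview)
Your reduction to Theorem~\ref{1dil} with $\gamma_{i\ell}=\gamma_i/n$ and the multiplier reformulation are exactly the paper's route, and your Mikhlin argument for $1<r\le 2$ is correct.

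The $r=1$ step, however, has a genuine gap. With your choice $\gamma_{i\ell}=\gamma_i/n$, the one-dimensional factor in the subordination is $h_t(s)=(1+4\pi^2s^2)^{\gamma_i/(2n)}e^{-4\pi^2 t s^2}$, and a scaling computation gives $\|\check h_t\|_{L^1(\mathbb R)}\sim t^{-\gamma_i/(2n)}$ as $t\to 0^+$. Hence
\[
\int_0^1 t^{\gamma_i/2-1}e^{-t}\,\|\check h_t\|_{L^1}^{\,n}\,dt
\;\sim\;\int_0^1 t^{\gamma_i/2-1}\,t^{-\gamma_i/2}\,dt
\;=\;\int_0^1 t^{-1}\,dt=\infty,
\]
so the integrability you assert fails logarithmically. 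In fact the exact embedding $L^1_{\gamma_i}(\mathbb R^n)\hookrightarrow L^1_{(\gamma_i/n,\dots,\gamma_i/n)}(\mathbb R^n)$ is false in general: the associated symbol does not vanish at infinity, and a dilation argument shows that if it were an $L^1$ multiplier then so would be the homogeneous symbol $\prod_\ell(\eta_\ell^2/|\eta|^2)^{\gamma_i/(2n)}$, which is a product of Riesz-transform symbols and not bounded on $L^1$.

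The fix is to actually use the strict inequality $\gamma_i>n$: choose $\gamma_i'\in(n,\gamma_i)$ and set $\gamma_{i\ell}=\gamma_i'/n$ instead. The block symbol then carries an extra factor $(1+4\pi^2|\eta|^2)^{-(\gamma_i-\gamma_i')/2}$, and the subordination integral becomes $\int_0^1 t^{(\gamma_i-\gamma_i')/2-1}\,dt<\infty$, yielding an $L^1$ kernel. Since $\gamma_i'$ can be taken arbitrarily close to $\gamma_i$ and the constraint $\max_i n/\gamma_i<\min_i p_i$ is strict, the range of admissible $p_i$ is unaffected.
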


We also provide  an endpoint case of Corollary~\ref{cor1}  when all $p_i=1$. 
Let $H^1(\R^n)$ denote  the classical Hardy space on $\R^n$. We note that when $m=1$, boundedness for $T_\si$ is known to hold from $H^1 $ to $L^1$.

\begin{theorem}\label{End}
 Suppose that   $ 1 \le r \le 2$  and that  $\ga_{i} >n  $ for all $i=1,\dots , m$.
Let $\sigma$ be a bounded function on $\mathbb{R}^{mn}$ which satisfies \eqref{dilationjj}. 
Then we have
\begin{equation}\label{equ:TSigmaESTH1}
\norm{T_{\sigma}}_{H^{ 1}(\R^n)\times\cdots\times H^1(\R^n)\to L^{1/m,\nf}(\R^n)}\lesssim A.
\end{equation}
\end{theorem}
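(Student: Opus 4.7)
The plan is to combine a dyadic frequency decomposition of $\si$ with a Calder\'on-Zygmund-type decomposition of the inputs at height tied to $\lambda$, and to exploit the strengthened hypothesis $\ga_i > n$ (as opposed to $\ga_i > n/r$ in Corollary~\ref{cor1}) to obtain \emph{integrable} pointwise decay of the kernel in each variable separately. Writing $\si = \sum_{j\in\zzz} \si_j$ with $\si_j(\xi) = \si(\xi) \wh\Psi(2^{-j}\xi)$, the hypothesis \eqref{dilationjj} together with $\ga_i > n$ and Sobolev embedding yields a tensor-product decay estimate for the kernel $K_j$ of $T_{\si_j}$ of the form
\[
|K_j(y_1, \ldots, y_m)| \lesssim 2^{jmn} \prod_{i=1}^m (1 + 2^j|y_i|)^{-\ga_i},
\]
and an analogous estimate for $\p_{y_i} K_j$ losing one power of $2^j$. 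Both bounds are integrable in each $y_i$ separately, which is the key analytic consequence of $\ga_i > n$.

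Next, fix $\lambda > 0$ and $f_i \in H^1(\R^n)$. The plan is to perform a Calder\'on-Zygmund decomposition of each $f_i$ at height proportional to $\lambda^{1/m}$, writing $f_i = g_i + \sum_{Q \in \mathcal Q_i} b_{i,Q}$, where $g_i$ satisfies $\|g_i\|_{\li} \lesssim \lambda^{1/m}$ with $\|g_i\|_{L^2}^2 \lesssim \lambda^{1/m}\|f_i\|_{H^1}$, each $b_{i,Q}$ is a mean-zero bump supported on a cube $Q$, and $\sum_{Q} |Q| \lesssim \lambda^{-1/m} \|f_i\|_{H^1}$. By multilinearity, $T_\si(f_1, \ldots, f_m)$ breaks into $2^m$ pieces, in each of which every input slot is either $g_i$ or the bad sum. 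The all-good contribution is controlled using the $L^2 \times \cdots \times L^2 \to L^{2/m}$ bound from Corollary~\ref{cor1} combined with Chebyshev's inequality, producing a level-set bound of the correct order.

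For each of the remaining $2^m - 1$ mixed pieces, I would restrict attention to the complement of the exceptional set $E = \bigcup_i \bigcup_{Q \in \mathcal Q_i} c^\ast Q$, whose measure satisfies $|E| \lesssim \lambda^{-1/m} \sum_i \|f_i\|_{H^1}$. On this good set, I would exploit the mean-zero property of each $b_{i,Q}$ by subtracting the zeroth-order Taylor expansion of the kernel $K_j(x-y_1,\ldots,x-y_m)$ at the center of $Q$ in the variable $y_i$. The resulting error is bounded by the derivative estimate on $\p_{y_i}K_j$ above times the diameter $\ell(Q)$, producing integrable decay in $x$ outside a neighborhood of $Q$. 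Summing in $Q$ via the measure bound $\sum_Q |Q| \lesssim \lambda^{-1/m} \|f_i\|_{H^1}$ then delivers the desired weak-type estimate.

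The main obstacle will be the coordinated summation in $j \in \zzz$ across several variables simultaneously when more than one slot is bad. The natural cutoff $2^j \ell(Q_i) \sim 1$ between the ``strong $L^2$ regime'' and the ``Taylor-cancellation regime'' depends on which variable's bad piece is under consideration, so the cutoffs must be chosen compatibly across the variables, typically by splitting along the smallest relevant scale among the contributing cubes. A secondary technical point is that $L^{1/m, \nf}$ is only a quasi-Banach space, so naive $\ell^{1/m}$-type atomic summation loses an unbounded constant; working directly at the distributional level via the Calder\'on-Zygmund decomposition is what makes the endpoint argument viable.
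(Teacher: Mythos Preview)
Your approach is genuinely different from the paper's, and it is worth contrasting the two.

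The paper does \emph{not} use a Calder\'on--Zygmund decomposition at all. Instead it recycles the cone decomposition $\si=\sum (\si\Phi_{j,k}+\si\Psi_{j,k})$ from the proof of Theorem~\ref{1dil}, and for each piece establishes a \emph{pointwise} domination of the form
\[
 |\De_j^\theta T_{\si_1}(f_1,\dots,f_m)| \lesssim A\Big(\prod_{i=1}^{m-1} M(f_i)\Big) M(\De_j^\eta f_m),
\]
and analogously for $\si_2$. The hypothesis $\ga_i>n$ is used exactly as you say, but to place the weighted kernel in $L^\infty$ (via Hausdorff--Young from $L^1$, after the $\de$-loss trick of Lemma~\ref{XL1}), so that the integrals collapse directly to Hardy--Littlewood maximal functions. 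One then takes $\ell^2(j)$, applies H\"older's inequality in $L^{1/m,\infty}$, uses the weak-type Fefferman--Stein inequality, and finally invokes He's square-function characterization of $H^{1/m,\infty}$ together with the Littlewood--Paley characterization of $H^1$. No level sets, no good/bad splitting.

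Your Calder\'on--Zygmund route is plausible and, if it goes through, would even yield the formally stronger endpoint $L^1\times\cdots\times L^1\to L^{1/m,\infty}$, since nothing in your sketch uses the $H^1$ structure beyond the embedding $H^1\hookrightarrow L^1$ and the mean-zero property of the bad pieces (which ordinary $L^1$ Calder\'on--Zygmund already provides). The tradeoff is exactly the obstacle you identify: the summed kernel $K=\sum_j K_j$ is \emph{not} a standard multilinear Calder\'on--Zygmund kernel under the product-type decay $\prod_i(1+2^j|y_i|)^{-\ga_i}$ (the size bound $|K|\lesssim(\sum|y_i|)^{-mn}$ can fail when the $|y_i|$ are very disparate and $\ga_i<mn$), so one cannot simply invoke Grafakos--Torres. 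The dyadic-in-$j$ argument you outline, with cutoffs governed by the smallest participating cube, can be made to work, but the bookkeeping across several simultaneous bad inputs is substantially heavier than the paper's two-line maximal-function argument. The paper's route buys brevity and a transparent use of the $H^1$ square function; yours would buy a stronger conclusion at the cost of a longer proof.
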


\bigskip

Another extension of the Coifman-Meyer multiplier theorem is in the multiparameter setting. In this case 
\eqref{mi} is relaxed to
\begin{equation}\begin{split}\label{GG3}
\big|\p_{\xi_{11}}^{\al_{11}}  &\cdots \p_{\xi_{1n}}^{\al_{1n}}
\p_{\xi_{21}}^{\al_{21}}  \cdots  \p_{\xi_{2n}}^{\al_{2n}}
\cdots \p_{\xi_{m1}}^{\al_{m1}} \cdots \p_{\xi_{mn}}^{\al_{mn}} \si (\xi_1,\dots , \xi_m) \big|
\\
& \lesssim \,\, (|\xi_{11} |
+\cdots+|\xi_{m1} |)^{-(\al_{11}+\cdots +\al_{m1})}\cdots
(|\xi_{1n} |+\cdots+|\xi_{mn} |)^{-(\al_{1n}+\cdots +\al_{mn})}
\end{split} \end{equation}
for sufficiently large  indices $\al_{i\ell} $. Such a condition was first
considered  by Muscalu, Pipher, Tao, and Thiele     \cite{MuPiTaTh, MuPiTaTh2}, who obtained boundedness for the
associated operator in the case $m=2$, i.e., from
$L^{p_1}\times   L^{p_2}$ to $L^p$ when $1/p_1+ 1/p_2=1/p$ and $1/2<p<\nf$.

In this article we also prove  a multilinear multiplier theorem   that extends   condition 
 \eqref{GG3}.  Precisely, we study multilinear multipliers that satisfy 
\begin{equation}\begin{split}\label{GG4}
\big|\p_{\xi_{11}}^{\be_{11}}  &\cdots \p_{\xi_{1n}}^{\be_{1n}}
\p_{\xi_{21}}^{\be_{21}}  \cdots  \p_{\xi_{2n}}^{\be_{2n}}
\cdots \p_{\xi_{m1}}^{\be_{m1}} \cdots \p_{\xi_{mn}}^{\be_{mn}} \si (\xi_1,\dots , \xi_m) \big|
\\
& \lesssim (|\xi_{11} |+\cdots+|\xi_{m1} |)^{-(\be_{11}+\cdots +\be_{m1})}\cdots
(|\xi_{1n} |+\cdots+|\xi_{mn} |)^{-(\be_{1n}+\cdots +\be_{mn})} 
\end{split} \end{equation}
with $\be_{ji} $ are restricted. To handle the case of  fractional  derivatives   we state our 
condition in terms of Sobolev spaces. 
We denote by 
$(I-\p_{\xi_{j\ell}}^2)^{
 \f{\ga_{j\ell}}{2}}
 $
 the operator given on the Fourier transform side 
by multiplication by 
$(1+4\pi^2 |y_{j\ell}|^2) ^{
 \f{\ga_{j\ell}}{2}}
 $,
 where $y_j$ is the dual variable of $\xi_j$. 
We now state  our   multiparameter version of Theorem~\ref{1dil},
 which extends the results in \cite{MuPiTaTh, MuPiTaTh2}
for H\"ormander type multipliers with minimal smoothness in a way that avoids time-frequency analysis.

 \begin{theorem}\label{Tensor}
Let  $ 1 \le r \le 2$  and $  \ga_{i\ell}  >1/r$ for all $1\le i \le m$ and $1\le \ell\le n$. Suppose that
$\sigma$ is a bounded function on $\mathbb R^{mn}$ such that
$$
   \sup_{k_1,\ldots,k_n\in\mathbb{Z}  } \!
  \bigg\|  \! \prod_{j=1}^m (I-\p_{\xi_{j1}}^2)^{\f{\ga_{j1}}{2}} \! \cdots  \! (I-\p_{\xi_{jn}}^2)^{\f{\ga_{jn}}{2}} \Big[ \sigma 
  \big( D_{k_1,\dots , k_n}\Xi \big)
  \!   \prod_{\ell=1}^n \!\wh{\Psi_\ell} (\xi_{1 \ell}, \dots , \xi_{m \ell})
  \Big]  \bigg\|_{L^r (\mathbb{R}^{mn})}\!\!\!\! \!\! =A<\nf,
$$
where 
$$
D_{k_1,\dots , k_n}\Xi =
 \begin{bmatrix}
    \xi_{11} & \xi_{12} &   \dots  & \xi_{1n} \\
    \xi_{21} & \xi_{22} &   \dots  & \xi_{2n} \\
         \vdots & \vdots &  \ddots & \vdots \\
   \xi_{m1} & \xi_{m2} &   \dots  & \xi_{mn}
\end{bmatrix}
\begin{bmatrix}
    2^{k_1} \\
    2^{k_2}   \\
    \vdots   \\
   2^{k_n} 
\end{bmatrix},
$$
for some $\widehat{ \Psi_\ell }$ smooth functions  on $\mathbb R^{m}$ supported
 in the annulus $\frac{1}{2}\le |\eta |\le 2$ satisfying
 \begin{equation}\label{Psic}
\sum_{k\in\mathbb{Z}}\widehat{\Psi_\ell}(2^{-k}\eta)=1, \qquad \quad\textup{for all $\eta\in \mathbb R^{m}\setminus\{0\}.$}
 \end{equation}
If $1<p_i<\infty$, $i=1,\dots , m$, satisfy $\max\limits_{1\le i\le m} \max\limits_{1\le \ell\le n}   \frac{1}{\gamma_{i\ell}}  < \min\limits_{1\le i \le m} p_i $   and $\frac{1}{p}=\frac{1}{p_1}+\cdots+\frac{1}{p_m} $, then   we have
\begin{equation*}
\norm{T_{\sigma}}_{L^{p_1}(\mathbb{R} ^{n})\times\cdots\times L^{p_m}(\mathbb{R} ^{n})\to L^p(\mathbb{R} ^{n})}\lesssim A.
\end{equation*}
\end{theorem}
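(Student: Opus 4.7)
The plan is to follow the same scheme that underlies Theorem~\ref{1dil}, but replace its single-parameter Littlewood--Paley decomposition along the isotropic annuli $\{|(\xi_1,\dots,\xi_m)|\sim 2^j\}$ by a product-type decomposition along the $n$-parameter cells $\{|\vec\xi_{\cdot\ell}|\sim 2^{k_\ell}\text{ for every }\ell\}$, where $\vec\xi_{\cdot\ell}=(\xi_{1\ell},\dots,\xi_{m\ell})\in\RR^m$. The partition of unity coming from \eqref{Psic}, namely
\[
\sum_{\vec k\in\zzz^n}\prod_{\ell=1}^n\wh{\Psi_\ell}\bigl(2^{-k_\ell}\vec\xi_{\cdot\ell}\bigr)=1
\]
away from the coordinate hyperplanes, yields the splitting $\sigma=\sum_{\vec k}\sigma_{\vec k}$ with $\sigma_{\vec k}(\xi)=\sigma(\xi)\prod_\ell\wh{\Psi_\ell}(2^{-k_\ell}\vec\xi_{\cdot\ell})$. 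Pulling back by $D_{\vec k}$ one obtains functions $\tau_{\vec k}(\eta)=\sigma(D_{\vec k}\eta)\prod_\ell\wh{\Psi_\ell}(\vec\eta_{\cdot\ell})$ supported in a fixed compact product of annuli, and the hypothesis of the theorem is precisely a uniform bound on the tensor-product Sobolev norm $\|\tau_{\vec k}\|_{W^{(\gamma_{j\ell}),r}}$.

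Next I would Fourier-expand each $\tau_{\vec k}$ on a large fixed cube containing its support, using a tensorised smooth truncation $\Phi(\eta)=\prod_\ell\Phi_\ell(\vec\eta_{\cdot\ell})$ equal to $1$ there, producing
\[
\tau_{\vec k}(\eta)=\Phi(\eta)\sum_{\vec a\in\zzz^{mn}}c_{\vec k,\vec a}\,e^{2\pi i\vec a\cdot\eta}.
\]
The product-Sobolev assumption and Hausdorff--Young give the coordinate-wise decay
\[
\Big(\sum_{\vec a}\prod_{j,\ell}(1+|a_{j\ell}|)^{r'\gamma_{j\ell}}|c_{\vec k,\vec a}|^{r'}\Big)^{1/r'}\lesssim A
\]
uniformly in $\vec k$. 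Inserting back and undoing the dilation, $\sigma_{\vec k}$ becomes a rapidly-convergent series of elementary tensors $c_{\vec k,\vec a}\prod_\ell\Phi_\ell(2^{-k_\ell}\vec\xi_{\cdot\ell})e^{2\pi i a_{j\ell}\,2^{-k_\ell}\xi_{j\ell}}$; the operator $T_{\sigma_{\vec k}}$ thus factors, in each column direction $\ell$, as a one-dimensional Littlewood--Paley-type frequency localisation at scale $2^{k_\ell}$ applied to a translate (by $2^{-k_\ell}a_{j\ell}$) of $f_j$ in the $\ell$th coordinate.

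For each fixed $\vec a$ the sum $\sum_{\vec k}T_{\sigma_{\vec k,\vec a}}$ is then controlled by a multiparameter Littlewood--Paley argument: first split each $\wh{\Psi_\ell}$ into $m$ pieces according to which of the $m$ coordinates $|\xi_{j\ell}|$ dominates in the $\ell$th column (a standard paraproduct reduction that gives access to a true Littlewood--Paley projection of some $f_{j_\ell}$ in direction $\ell$), then apply the $n$-fold iterated Fefferman--Stein vector-valued inequality for the strong maximal operator together with the product square-function characterisation of $L^p$, exactly as in the proof of Theorem~\ref{1dil} but carried out separately in each of the $n$ coordinate directions. This yields
\[
\bigl\|T_{\sigma,\vec a}(f_1,\dots,f_m)\bigr\|_{L^p}\lesssim\prod_{j,\ell}(1+|a_{j\ell}|)^{\kappa_{j\ell}}\prod_j\|f_j\|_{L^{p_j}}
\]
for some exponents $\kappa_{j\ell}$ reflecting the $L^{p_i}$-norms of shifted maximal functions; summing in $\vec a$ is then absorbed by the coefficient decay provided $\kappa_{j\ell}<\gamma_{j\ell}$, which is the meaning of the hypothesis $\max_{i,\ell}\gamma_{i\ell}^{-1}<\min_i p_i$ together with $\gamma_{j\ell}>1/r$.

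The main obstacle is the multiparameter summation in $\vec k\in\zzz^n$. In contrast with the single-parameter setting of Theorem~\ref{1dil}, where a single Fefferman--Stein inequality closes the argument, one must here establish an $n$-fold product strong maximal/Littlewood--Paley inequality that tolerates the translations in each coordinate coming from the factors $e^{2\pi i a_{j\ell} 2^{-k_\ell}\xi_{j\ell}}$, while producing only a polynomial blowup in each $|a_{j\ell}|$. Matching that polynomial loss against the Sobolev decay of $c_{\vec k,\vec a}$ — with no room to spare in the critical regime $\gamma_{i\ell}=1/p_i$ — is where the range condition $\max_{i,\ell}\gamma_{i\ell}^{-1}<\min_i p_i$ becomes essential, and it is the sole place where the whole argument is tight.
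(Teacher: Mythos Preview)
Your approach is genuinely different from the paper's: the paper never Fourier--expands the localized pieces $\tau_{\vec k}$. Instead it obtains, for each fixed $\vec k$, the pointwise bound
\[
\big|T_{\tau_{\vec k}}(g_1,\dots,g_m)(x)\big|\lesssim A\prod_{i=1}^m\mathcal M(|g_i|^\rho)(x)^{1/\rho}
\]
directly, by inserting the weight $\prod_{i,\ell}(1+2^{k_\ell}|x_\ell-y_{i\ell}|)^{\gamma_{i\ell}}$, applying H\"older with exponents $\rho,\rho'$ (where $\max_{i,\ell}\gamma_{i\ell}^{-1}<\rho<\min(r,p_1,\dots,p_m)$), and using Hausdorff--Young together with Lemma~\ref{XL1}. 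No coefficient sum over a lattice $\vec a$ ever appears, and so no ``polynomial loss in $|a_{j\ell}|$'' needs to be matched against anything.

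There are two concrete gaps in your scheme. First, splitting each column only by ``which coordinate dominates'' does not suffice to sum over $\vec k$. If in column $\ell$ one merely knows that $|\xi_{j_\ell\ell}|$ is the largest, then only $f_{j_\ell}$ receives a genuine Littlewood--Paley projection at scale $2^{k_\ell}$ in direction $\ell$; the remaining functions carry only low-pass pieces with no $\ell^2$-summability in $k_\ell$, and $|\sum_j\xi_{j\ell}|$ need not be comparable to $2^{k_\ell}$, so one cannot transfer the projection to the output either. The paper resolves this by a finer column-wise dichotomy (case~I: one coordinate strictly dominates, giving $|\sum_j\xi_{j\ell}|\sim 2^{k_\ell}$ and hence an output projection $\widetilde\Delta_{k_\ell}^{(\ell)}$; case~II: the two largest are comparable, giving projections on two inputs and closure via Cauchy--Schwarz). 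Without this, the sum $\sum_{\vec k}$ is not controlled. Second, the claim that the loss exponents $\kappa_{j\ell}$ from ``shifted maximal functions'' satisfy $\kappa_{j\ell}<\gamma_{j\ell}$ under the stated hypotheses is unjustified. The shift $2^{-k_\ell}a_{j\ell}$ forces the kernel bound $(1+|2^{k_\ell}(x_\ell-y_\ell)-a_{j\ell}|)^{-N}\le(1+|a_{j\ell}|)^{N}(1+|2^{k_\ell}(x_\ell-y_\ell)|)^{-N}$ with $N>1$ just to reach the strong maximal function, so $\kappa_{j\ell}>1/\rho$ at best; combined with the $\ell^{r'}$ decay of the coefficients this yields summability only when $\gamma_{j\ell}>1/r+1/\rho$, strictly stronger than the hypothesis $\gamma_{j\ell}>1/r$. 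The paper's weighted-H\"older argument is what makes the sharp range $\gamma_{i\ell}>1/r$ accessible.
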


A version of Theorem~\ref{Tensor} was proved by 
 Chen and Lu \cite{CL} when $r=m=2$  and when the 
differential operator 
$(I-\p_{\xi_{j1}}^2)^{\f{\ga_{j1}}{2}} \cdots   (I-\p_{\xi_{jn}}^2)^{\f{\ga_{jn}}{2}}$ is replaced by 
$(I-\De_{\xi_j})^{\f{\ga_j}2}$, where  $\ga_j=\ga_{j1} +\cdots + \ga_{jn} $; besides allowing $r$ to be less than $2$ and $m\ge 2$, 
Theorem~\ref{Tensor} improves that of Chen and Lu \cite{CL} in the sense that only  a restricted number of derivatives falls on  each coordinate, while in  \cite{CL} all   derivatives could fall on a single coordinate $\xi_j$  of the multiplier. 
An immediate  consequence  of Theorem \ref{Tensor} is the following:

 \begin{cor}\label{less1}
Let $ \si_{\ell}(\xi_{1 \ell }, \dots , \xi_{m \ell })$ be  bounded functions on $\mathbb{R}^{m }$
for $1\le \ell \le n$. Let
$\sigma (\xi_1, \dots , \xi_m)= \prod_{\ell =1}^n
\si_{\ell}(\xi_{1 \ell }, \dots , \xi_{m \ell })$, where  $\xi_i=(\xi_{i1},\dots , \xi_{in})\in \R^n$, $1\le i\le m$.
Suppose that for some $ \ga_{i\ell}$ 
 and $r$   as in Theorem \ref{Tensor} we have
\begin{equation}\label{dilationjQQ}
\sup_{1\le \ell\le n}\sup_{k\in\mathbb{Z}}\norm{
(I-\p_{\xi_{1\ell}}^2)^{\f{\ga_{1\ell}}{2}} \cdots   (I-\p_{\xi_{m\ell}}^2)^{\f{\ga_{m\ell}}{2}} \Big[
\sigma_\ell
 (2^k\cdot)\widehat{\Psi_\ell }   \Big]}_{L^r   (\mathbb{R}^{m})}=B<\infty
\end{equation}
where $\wh{ \Psi_\ell}$ is a smooth function supported in an annulus in $\R^m$ that satisfies  \eqref{Psic}.
Then for $\max\limits_{1\le \ell \le n} (\frac{1}{\gamma_{i\ell}}, 1)<p_i<\infty$
  for all $i=1,\dots , m$  and $\frac{1}{p}=\frac{1}{p_1}+\cdots+\frac{1}{p_m} $  we have
\begin{equation*}
\norm{T_{\sigma}}_{L^{p_1}(\mathbb{R} ^{n})\times\cdots\times L^{p_m}(\mathbb{R} ^{n})\to L^p(\mathbb{R} ^{n})}\lesssim B^n.
\end{equation*}
\end{cor}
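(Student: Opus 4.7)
The plan is to reduce Corollary~\ref{less1} to Theorem~\ref{Tensor} by exploiting the tensor product structure of $\sigma$ in a bookkeeping argument; there is no real obstacle, only careful verification that the hypothesis of Theorem~\ref{Tensor} is satisfied with constant $A \lesssim B^n$.

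First I would rewrite the symbol after dilation. Because $\sigma(\xi_1,\dots,\xi_m)=\prod_{\ell=1}^n \sigma_\ell(\xi_{1\ell},\dots,\xi_{m\ell})$, each column of $D_{k_1,\dots,k_n}\Xi$ is scaled only by $2^{k_\ell}$ in the $\ell$th coordinate direction, so
\begin{equation*}
\sigma\bigl(D_{k_1,\dots,k_n}\Xi\bigr)=\prod_{\ell=1}^{n}\sigma_{\ell}\bigl(2^{k_\ell}\xi_{1\ell},\dots,2^{k_\ell}\xi_{m\ell}\bigr).
\end{equation*}
Combined with the fact that $\prod_{\ell=1}^n \widehat{\Psi_\ell}(\xi_{1\ell},\dots,\xi_{m\ell})$ is already a tensor product in the $\ell$-variable groups, the function appearing inside the norm of Theorem~\ref{Tensor} factors as
\begin{equation*}
\prod_{\ell=1}^{n} F_{k_\ell,\ell}(\xi_{1\ell},\dots,\xi_{m\ell}),\qquad F_{k,\ell}(\eta):=\sigma_{\ell}(2^{k}\eta)\,\widehat{\Psi_\ell}(\eta).
\end{equation*}

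Next I would exploit the commutativity of the Bessel-type operators. The total operator $\prod_{j=1}^{m}\prod_{\ell=1}^{n}(I-\p_{\xi_{j\ell}}^{2})^{\gamma_{j\ell}/2}$ in Theorem~\ref{Tensor} may be regrouped as $\prod_{\ell=1}^{n}\bigl[\prod_{j=1}^{m}(I-\p_{\xi_{j\ell}}^{2})^{\gamma_{j\ell}/2}\bigr]$. Since the $\ell$th factor acts only on the variables $(\xi_{1\ell},\dots,\xi_{m\ell})$ and is disjoint from the variables occurring in any other factor, applying it to the tensor product above preserves the tensor structure: each bracketed operator lands exactly on $F_{k_\ell,\ell}$. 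The $L^r(\R^{mn})$ norm of a tensor product factors as a product of $L^r(\R^m)$ norms, so for any $k_1,\dots,k_n\in\zzz$,
\begin{equation*}
\bigg\|\prod_{j,\ell}(I-\p_{\xi_{j\ell}}^{2})^{\gamma_{j\ell}/2}\Big[\sigma(D_{k_1,\dots,k_n}\Xi)\prod_{\ell=1}^{n}\widehat{\Psi_\ell}\Big]\bigg\|_{L^r(\R^{mn})} = \prod_{\ell=1}^{n}\bigg\|\prod_{j=1}^{m}(I-\p_{\xi_{j\ell}}^{2})^{\gamma_{j\ell}/2}\bigl[\sigma_{\ell}(2^{k_\ell}\cdot)\widehat{\Psi_\ell}\bigr]\bigg\|_{L^r(\R^{m})}.
\end{equation*}
By hypothesis \eqref{dilationjQQ}, each factor on the right is bounded by $B$ uniformly in $k_\ell$, so the supremum on the left is at most $B^n$, and the constant $A$ in Theorem~\ref{Tensor} satisfies $A\le B^n$.

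Finally I would check the indices and apply Theorem~\ref{Tensor}. The assumption $\max_{\ell}\max(1/\gamma_{i\ell},1)<p_i$ for every $i$ forces $p_i>1$ and $p_i>1/\gamma_{i\ell}$ for all $\ell$; taking minimum over $i$ and maximum over $(i,\ell)$ gives $\max_{i,\ell}1/\gamma_{i\ell}<\min_i p_i$, which is exactly the index hypothesis of Theorem~\ref{Tensor}. Invoking that theorem with constant $A\le B^n$ produces the bound $\|T_\sigma\|_{L^{p_1}\times\cdots\times L^{p_m}\to L^p}\lesssim B^n$, which is the claim.
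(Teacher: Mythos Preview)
Your proposal is correct and follows exactly the route the paper intends: the paper simply declares Corollary~\ref{less1} to be ``an immediate consequence of Theorem~\ref{Tensor}'' without further detail, and your argument supplies precisely the verification that the tensor structure of $\sigma$, the column-wise nature of the dilation $D_{k_1,\dots,k_n}$, and the separability of the Bessel-type operators together force the quantity $A$ in Theorem~\ref{Tensor} to factor as a product bounded by $B^n$. There is nothing to add.
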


As an application, we use this corollary to give a short proof of the boundedness
of Calder\'on-Coifman-Journ\'e commutators (Proposition \ref{th7.1})
where the results in  \cite{CL, MuPiTaTh, MuPiTaTh2} are not applicable.


Finally, we use arrows to denote elements of $\R^{nm}$, i.e., $\vec \xi = (\xi_1,\dots , \xi_m)$, 
 where
 $\xi_j\in \R^n$.

\medskip

\section{Preliminaries}

The following lemma will be useful in the sequel. 	
	
{\begin{lemma}\label{XL1}
Let $\ga_{i\ell},\ga_j,\ga>0$, $1\le i \le m $, $1\le j,\ell\le n$. 
Let $D^{\Gamma}$ be a differential operator  on $\mathbb R^{mn}$ 
of one of the following three  types:
\begin{eqnarray*}
 &&\prod_{i=1}^m\prod_{\ell=1}^n (I-\p_{\xi_{i\ell} }^2)^{\f{\ga_{i\ell}}{2}}; \\
  &&(I-\De_{\xi_1})^{\f{\ga_1}{2}} \cdots  (I-\De_{\xi_m})^{\f{\ga_m}{2}}; \\
  &&(I-\De_{\xi_1}-\cdots -\De_{\xi_m})^{\f{\ga }{2}}.
\end{eqnarray*}
Let $1<\rho \le r<\infty $ and let $\phi$ be a smooth function with compact support. Then there is a constant 
$C= C(\rho,r,\phi,n,\ga_{i\ell},\ga_j,\ga)$ such that 
\begin{equation}\label{Lemm11ineq}
\big\| D^{\Gamma}(\phi f) \big\|_{L^\rho( \R^{mn})} \le C \big\| D^{\Gamma}(f) \big\|_{L^r(\R^{mn})}
\end{equation}
is valid for all Schwartz functions  $f$ on $\R^{mn}$. 

Moreover, if   $D_\de$ is an operator of one of the following three  types:
 \begin{eqnarray*}
 &   & \prod_{i=1}^m\prod_{\ell=1}^n (I-\p_{\xi_{i\ell} }^2)^{\f{\de}{2}} \\
 &   & (I-\De_{\xi_1})^{\f{\de}{2}} \cdots  (I-\De_{\xi_m})^{\f{\de}{2}} \\
  &   & (I-\De_{\xi_1}-\cdots -\De_{\xi_m})^{\f{\de }{2}}
\end{eqnarray*}
then for $D^\Gamma$ and  $D_\de$ of the same type and $\de>0$ we have 
\begin{equation}\label{Lemm11ineq2}
\big\|D^{\Gamma} D_{-\de}(\phi f) \big\|_{L^1( \R^{mn})} \le C' \big\| D^{\Gamma} D_{ \de}(f) \big\|_{L^1(\R^{mn})}
\end{equation}
for all Schwartz functions  $f$ on $\R^{mn}$. Here $C'$ is a constant depending 
on $ \phi,n,\ga_{i\ell},\ga_j,\ga,\de$. 
\end{lemma}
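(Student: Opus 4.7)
My plan is to exploit two structural features. First, the Fourier multiplier symbols underlying all three forms of $D^\Gamma$, as well as $D^\Gamma D_{\pm\delta}$, are smooth functions of polynomial growth on all of $\RR^{mn}$, since each base factor $(1+4\pi^2 y^2)^{\alpha/2}$ is strictly positive; consequently their convolution kernels are smooth away from the origin and decay faster than any polynomial at infinity (by iterated integration by parts in the frequency variable). Second, $\phi$ is smooth with $\textup{supp}\,\phi\subset B(0,R)$, so $\phi f$ lives in a fixed ball, which permits a local/tail decomposition of each estimate. A third recurring ingredient is that $D^{-\Gamma}$ and $D_{-\delta}$ are convolutions with $L^1$ Bessel-type kernels, so $\|f\|_{L^r}\lesssim\|D^\Gamma f\|_{L^r}$ and analogous inversions hold.

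For \eqref{Lemm11ineq}, I will split $\RR^{mn}=B(0,2R)\cup B(0,2R)^c$. On the local piece I invoke a fractional Leibniz (Kato--Ponce) inequality of the appropriate product, tensor, or isotropic Bessel type to obtain, for $1<r<\infty$,
$$\|D^\Gamma(\phi f)\|_{L^r(\RR^{mn})}\le C(\phi)\bigl(\|f\|_{L^r}+\|D^\Gamma f\|_{L^r}\bigr)\le C'(\phi)\|D^\Gamma f\|_{L^r},$$
and H\"older in the fixed ball converts this $L^r$ bound into the desired $L^\rho$ bound at the cost of a factor $|B(0,2R)|^{1/\rho-1/r}$. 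On $B(0,2R)^c$, the kernel decay $|K_{D^\Gamma}(x-y)|\le C_N(1+|x-y|)^{-N}$ valid for $|x-y|\ge R$ produces
$$|D^\Gamma(\phi f)(x)|\le C_N(1+|x|)^{-N}\|\phi\|_{L^{r'}}\|f\|_{L^r},$$
whose $L^\rho$-norm over the tail is controlled by a constant times $\|D^\Gamma f\|_{L^r}$.

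For \eqref{Lemm11ineq2} the scheme is parallel, but the $L^1$ endpoint is the main obstacle and is what forces the $\delta$-smoothing on the right. Writing $D^\Gamma D_{-\delta}=G_\delta\ast D^\Gamma$ with $G_\delta\in L^1(\RR^{mn})$ and applying Young's inequality reduces the matter to $\|D^\Gamma(\phi f)\|_{L^1}\le C\|D^\Gamma D_\delta f\|_{L^1}$. Since classical Kato--Ponce degenerates at $L^1$, I will run a Littlewood--Paley decomposition $f=f_{\mathrm{low}}+\sum_{k\ge 0}f_k$ adapted to the operator at hand, combined with the reverse Bernstein estimate $2^{k(\Gamma+\delta)}\|f_k\|_{L^1}\lesssim\|D^\Gamma D_\delta f\|_{L^1}$ uniform in $k$. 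Distributing $D^\Gamma$ across each $\phi f_k$ and using its essentially scalar action $2^{k\Gamma}$ on frequency $2^k$, one is led to sum $\sum_{k\ge 0}2^{k\Gamma}\|f_k\|_{L^1}=\sum_{k\ge 0}2^{-k\delta}\bigl(2^{k(\Gamma+\delta)}\|f_k\|_{L^1}\bigr)$, a geometric series that converges precisely because $\delta>0$. The low-frequency term is dealt with by ordinary Bernstein, and the tail on $B(0,2R)^c$ is controlled by kernel decay as in the first inequality; the positive $\delta$ is indispensable throughout this step.
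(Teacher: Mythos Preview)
Your treatment of \eqref{Lemm11ineq} is sound and is in fact mentioned by the authors themselves as an alternative: they note that Kato--Ponce inequalities of the appropriate product or tensor type (from \cite{GrOh}) yield this estimate. Their own argument is different, however: they embed $D^\Gamma$ in the analytic family $D^{z\Gamma}$ and interpolate between $z=it$ (where $D^{\pm it\Gamma}$ is an $L^\rho$-bounded multiplier with polynomial bounds in $t$) and $z=2N+it$ (where the integer-order operator is expanded by the ordinary Leibniz rule, followed by H\"older on $\textup{supp}\,\phi$ to pass from $L^\rho$ to $L^r$). The same complex-interpolation scheme is then recycled for \eqref{Lemm11ineq2}: at $z=it$ one uses that $D^{\pm it\Gamma}D_{-\delta}$ has an $L^1$ convolution kernel, and at $z=2N+it$ one expands $D^{2N\Gamma}$ by Leibniz and observes that every resulting piece $D^kD^{-2N\Gamma}D^{-it\Gamma}D_{-\delta}$ again has an integrable kernel. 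Their approach is more uniform across the two inequalities and sidesteps the $L^1$ endpoint difficulties entirely by always pairing a possibly rough multiplier with a $D_{-\delta}$.

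Your argument for \eqref{Lemm11ineq2} has a genuine gap. After the correct reduction to $\|D^\Gamma(\phi f)\|_{L^1}\lesssim\|D^\Gamma D_\delta f\|_{L^1}$ and the Littlewood--Paley split $f=f_{\mathrm{low}}+\sum_k f_k$, you assert that $D^\Gamma$ has ``essentially scalar action $2^{k\Gamma}$'' on $\phi f_k$, i.e.\ that $\|D^\Gamma(\phi f_k)\|_{L^1}\lesssim 2^{k\Gamma}\|f_k\|_{L^1}$. But $\phi f_k$ is \emph{not} frequency-localized near $2^k$: multiplication by the compactly supported $\phi$ convolves $\widehat{f_k}$ with the Schwartz (but not compactly supported) function $\widehat\phi$, so $\widehat{\phi f_k}$ has a tail at all high frequencies. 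Consequently $D^\Gamma$ sees arbitrarily large frequencies, and a naive bound gives only $\sum_j 2^{j\gamma}\|P_j(\phi f_k)\|_{L^1}$, which diverges unless you control the high-$j$ pieces. This can be repaired---for $j>k+C$ one exploits the vanishing moments of $\psi_j$ together with $P_jP_k=0$ to show $\|P_j(\phi f_k)\|_{L^1}\lesssim_M 2^{-jM}\|f_k\|_{L^1}$ for every $M$---but that cancellation argument is the real content of the step and is absent from your sketch. Without it the geometric-series summation does not close, and the $L^1$ case is precisely where one cannot fall back on Calder\'on--Zygmund or multiplier theorems to absorb the leakage.
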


\begin{proof}
Estimate \eqref{Lemm11ineq} could be derived by  versions 
 of the Kato-Ponce inequality adapted to the types  of operators in question, 
such versions are given   in  \cite[Section 5]{GrOh}. 
In the case where $D^{\Gamma}= (I-\De )^{\f \gamma 2}$, the proof of \eqref{Lemm11ineq} is also 
given in \cite[Lemma 7.5.7]{GrafakosMFA}. The idea in this reference also works in this setting. 
We provide a sketch: we embed $D^{\Gamma}$ in the analytic family of differential operators $D^{z\Gamma}$ (in which all $\gamma$'s are multiplied by $z$) and  reduce matters to the inequality 
$$
\big\| D^{z\Gamma}(\phi D^{-{z\Gamma}} f) \big\|_{L^\rho(\R^{mn})} \lesssim \big\| f \big\|_{L^r(\R^{mn})}\, .
$$
Let us assume that $\ga_{i\ell}$, $\ga_j $, $\ga$ 
are rational numbers;
if the case of rational numbers is proved, then by continuity we can deduce the result for all positive numbers as follows: 
on the right of the inequality we obtain a constant that is polynomial in $ \ga_{i\ell} ,  \ga_j$ or $\ga$. But each 
function $D^\Gamma (\phi f)$ and $D^\Gamma (  f)$ is continuous in  $ \ga_{i\ell} ,  \ga_j$ or $\ga$. Using this 
continuity we obtain the conclusion for all $ \ga_{i\ell} ,  \ga_j$, $\ga$ positive reals. 

 To prove \eqref{Lemm11ineq} we
  interpolate between the cases where $z=it$ and $z=2N+it$, where $N$ is a natural number 
 and common multiple of all the  denominators of  
  $\ga_{i\ell}$, $\ga_j $, $\ga$. 
 At the  endpoint cases  $z=it$ and $z=2N+it$, the 
 $D^{it\Gamma}$ and $D^{-it\Gamma}$ are   $L^\rho$ bounded  with bounds that grow at most polynomially in $t$ 
 (and in the $\ga$'s), while  $D^{2N\Gamma}$ is expanded via Leibniz's rule. 
Applying the  H\"ormander multiplier theorem and H\"older's inequality (to estimate the  $L^\rho$ norm over the support of $\phi$ by the $L^r$ norm over the entire space) we obtain the 
 claimed assertion in the cases where $z=it$ and $z=2N+it$ with bounds that grow at most polynomially in $t$. 
Interpolation for analytic families of operators yields the claimed conclusion.

We now turn our attention to \eqref{Lemm11ineq2} which is equivalent to 
\begin{equation}\label{Lemm11ineq2-2}
\big\| D^{\Gamma} D_{-\de}\big(\phi D^{- \Gamma }D_{-\de} (f)\big) \big\|_{L^1( \R^{mn})} \le C' \big\| f \big\|_{L^1(\R^{mn})}
\end{equation}
and observe that $D_{-\de}=(D_{\de})^{-1}$. 

We embed the operator $f\mapsto D^{\Gamma} D_{-\de}\big(\phi D^{- \Gamma }D_{-\de} (f)\big)$ into the analytic family of operators 
$f\mapsto D^{z\Gamma} D_{-\de}\big(\phi D^{-z \Gamma }D_{-\de} (f)\big)$ and we obtain  \eqref{Lemm11ineq2-2} as a consequence 
of interpolation between the points $z=it$ and $z=2N+it$, where $N$ is  as before and $t$ is real. 
At the endpoint $z=it$   we have that $D^{\pm it \Gamma}D_{-\de}$ is a convolution operator with  an integrable kernel and so 
$$
\| D^{it\Gamma}D_{-\de}(\phi D^{-it\Gamma}D_{-\de}(f))\|_{L^1} \lesssim \|\phi D^{-it\Gamma}D_{-\de}(f)\|_{L^1}
\lesssim \|  D^{-it\Gamma}D_{-\de}(f)\|_{L^1} \lesssim \| f \|_{L^1}
$$
with constants bounded by polynomial expressions of the $\ga$'s and $|t|$. 
When $z=2N+it$   we have 
\begin{equation}\label{Lemm11ineq2-23}
\big\| D^{it\Gamma}D_{-\de}D^{ 2N\Gamma}(\phi D^{-it}D_{-\de}(f)) \big\|_{L^1}   \lesssim     
\big\| D^{ 2N\Gamma}(\phi D^{- 2N\Gamma-it\Gamma}D_{-\de}(f)) \big\|_{L^1} 
\end{equation}
and we expand the $D^{ 2N\Gamma}$ derivative via Leibniz's rule. Then $\| D^{ 2N\Gamma}(\phi G)\|_{L^1}$ is 
bounded by a constant multiple of a sum of  terms like $\| D^{k }(G)\|_{L^1}$ where $0\le k\le 2N\Gamma$ and 
$D^{k }$ has the same type as $D^\Gamma$. 
Each operator of the form $D^{k }D^{-2N\Gamma}D^{-it\Gamma}  D_{-\de}$
is given by convolution with an integrable kernel. 
At the end we control the right hand side of \eqref{Lemm11ineq2-23} by a constant multiple of $\|f\|_{L^1}$, with 
a constant bounded by  polynomial expressions of  the $\ga$'s and $|t|$. 
This concludes the sketch of proof. 
\end{proof}
}

We will also need a reverse square function inequality associated with Littlewood-Paley operators 
acting on each variable separately.
We denote variables in $\mathbb R^{ nl}$ by $(z_1,\dots , z_n)$, where each $z_j$ lies in $\R^l$.
Fix a smooth function $\wh\Psi$ supported in an annulus in $\R^l$
satisfying $\sum_{j\in \mathbb Z}\wh\Psi  (2^{-j}z) =1$ for all $z\neq 0$.
For $j\in \mathbb Z$, define a Littlewood-Paley operator
$$
\Delta_{j }^{(k)}(f) = \big( \wh{f} (z_1,z_2,\dots , z_n)  \wh\Psi  (2^{-j}z_k)    \big)\spcheck
$$
acting on functions $f$ on $\mathbb R^{ nl}$.
 We need the following result.

\begin{lemma}\label{GLP} 
For $f\in L^p(\mathbb R^{ nl})$ with $1<p<\nf$   we have
\begin{equation}\label{lp1}
\Big\|\Big(\sum_{j_1}\cdots\sum_{j_n}|\Delta_{j_1}^{(1)}\cdots\Delta_{j_n}^{(n)}(f)|^2\Big)^{1/2}\Big\|_{L^p(\mathbb R^{ nl})}
\lesssim\|f\|_{L^p(\mathbb R^{ nl})}.
\end{equation}
Conversely,   for $0<p<\nf$ there exists a constant $C$ such that for any  $f$   in   $ L^2(\R^{nl})$  satisfying
\begin{equation}\label{lp0}
\|(\sum_{j_1}\cdots\sum_{j_n}|\Delta_{j_1}^{(1)}\cdots
\Delta_{j_n}^{(n)}(f)|^2)^{1/2}\|_{L^p}<\nf
\end{equation}
we have
\begin{equation}\label{lp2}
\|f\|_{L^p(\mathbb R^{ nl})}\le C \Big\|\Big(\sum_{j_1}
\cdots\sum_{j_n}|\Delta_{j_1}^{(1)}\cdots\Delta_{j_n}^{(n)}(f)|^2\Big)^{1/2}\Big\|_{L^p(\mathbb R^{ nl})}.
\end{equation}
\end{lemma}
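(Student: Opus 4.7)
\textbf{Plan for the forward inequality \eqref{lp1}, $1<p<\nf$.} I would proceed by induction on $n$. The base case $n=1$ is the classical one-parameter Littlewood--Paley inequality on $\R^l$, whose $\ell^2$-valued (Fefferman--Stein) extension says that for any sequence of scalar functions $\{g_\al\}$ on $\R^l$,
$$
\Big\|\Big(\sum_{j}\sum_\al |\Delta_j g_\al|^2\Big)^{1/2}\Big\|_{L^p(\R^l)} \lesssim \Big\|\Big(\sum_\al |g_\al|^2\Big)^{1/2}\Big\|_{L^p(\R^l)}.
$$
For the inductive step, since the operators $\Delta_{j_k}^{(k)}$ act in disjoint blocks of coordinates and mutually commute, I apply this $\ell^2(\mathbb Z^{n-1})$-valued inequality in the $z_n$-variable (with $z_1,\dots,z_{n-1}$ held fixed and then integrated over) to the family
$$
g_{j_1,\ldots,j_{n-1}}\;=\;\Delta_{j_1}^{(1)}\cdots\Delta_{j_{n-1}}^{(n-1)} f.
$$
This produces the $n$-parameter square function on the left and the $(n-1)$-parameter square function on the right, reducing the problem by one parameter and closing the induction.

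\textbf{Plan for the reverse inequality \eqref{lp2}.} The starting point is a Calder\'on reproducing formula. I would choose $\wh{\widetilde\Psi}$ smooth, supported in a slightly larger annulus in $\R^l$, and identically equal to $1$ on the support of $\wh\Psi$, and let $\widetilde\Delta_j^{(k)}$ be defined like $\Delta_j^{(k)}$ but with $\wh\Psi$ replaced by $\wh{\widetilde\Psi}$. Since $\widetilde\Delta_j^{(k)}\Delta_j^{(k)}=\Delta_j^{(k)}$ as Fourier multipliers, for $f\in L^2(\R^{nl})$ one has (with convergence in $L^2$)
$$
f \;=\; \sum_{j_1,\ldots,j_n} \widetilde\Delta_{j_1}^{(1)}\cdots\widetilde\Delta_{j_n}^{(n)}\bigl(\Delta_{j_1}^{(1)}\cdots\Delta_{j_n}^{(n)} f\bigr).
$$

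When $1<p<\nf$, \eqref{lp2} follows by duality: pairing $f$ with $g\in L^{p'}\cap L^2$, moving the $\widetilde\Delta_j^{(k)}$ onto $g$, applying Cauchy--Schwarz in the index $(j_1,\dots,j_n)$, and then invoking the forward inequality \eqref{lp1} (already proved) for $g$ with the operators $\widetilde\Delta_j^{(k)}$, which enjoy the same Littlewood--Paley estimates as the $\Delta_j^{(k)}$.

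\textbf{The genuinely hard case is $0<p\le 1$,} where duality and Khintchine's inequality no longer apply. Here I would invoke product Hardy space theory in the sense of S.-Y.~A. Chang and R.~Fefferman: an $L^2$ function $f$ whose multi-parameter square function $S(f):=\big(\sum_{j_1,\ldots,j_n}|\Delta_{j_1}^{(1)}\cdots\Delta_{j_n}^{(n)}f|^2\big)^{1/2}$ satisfies $\|S(f)\|_{L^p}<\nf$ belongs to the product Hardy space $H^p(\R^l\times\cdots\times\R^l)$ with norm comparable to $\|S(f)\|_{L^p}$, and on that space the $L^p$ quasi-norm is controlled by the $H^p$ norm via the iterated strong-maximal-function characterization. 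Chaining these two estimates through the reproducing formula above yields \eqref{lp2} in the range $0<p\le 1$. Because \textsc{Lemma~\ref{GLP}} is used only as a preliminary tool, invoking the established product-$H^p$ machinery at this point is natural and, essentially, unavoidable.
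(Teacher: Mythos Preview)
Your argument for the forward inequality \eqref{lp1} is standard and matches what the paper does (the paper simply cites it as well known). For the reverse inequality, however, your route diverges from the paper's in an instructive way.

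For $1<p<\infty$ you use duality and the reproducing formula, which is correct but not what the paper does. For $0<p\le 1$ you invoke the Chang--Fefferman product Hardy space machinery as a black box. This is legitimate, but it is considerably heavier than necessary, and in fact the paper's Remark following the proof of Lemma~\ref{GLP} presents the inequality $\|f\|_{L^p}\le C\|f\|_{H^p(\R^l\times\cdots\times\R^l)}$ as a \emph{consequence} of \eqref{lp2}, not as an input to it. So while your argument is not circular, it does reverse the intended logical flow.

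The paper instead runs a single induction on $n$ that works uniformly for all $0<p<\infty$. The base case $n=1$ uses only the one-parameter square function characterization of $H^p(\R^l)$ (together with the observation that $f\in L^2$ forces the polynomial correction to vanish). The inductive step is carried out via Rademacher randomization: one proves the vector-valued reverse inequality
\[
\Big\|\Big(\sum_k|f_k|^2\Big)^{1/2}\Big\|_{L^p(\R^{nl})}\lesssim \Big\|\Big(\sum_{j_1,\dots,j_n,k}|\Delta_{j_1}^{(1)}\cdots\Delta_{j_n}^{(n)}f_k|^2\Big)^{1/2}\Big\|_{L^p(\R^{nl})}
\]
by applying the scalar $n$-parameter case to $g=\sum_k f_k r_k(t)$, integrating in $t$, and using Khintchine's inequality on both sides. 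Your remark that ``Khintchine's inequality no longer applies'' when $p\le 1$ is not correct: Khintchine holds for all $0<p<\infty$, and this is exactly the mechanism the paper exploits. The payoff is that the paper's proof needs only single-parameter $H^p$ theory, avoiding the product Hardy space apparatus entirely.
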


\begin{proof}[Proof of Lemma \ref{GLP}]
The proof of \eqref{lp1} is well known and is omitted;  see for instance
\cite[Theorem 6.1.6]{Grafakos1} when $l=1$ but the same idea works for all $l$.
So we now focus on  \eqref{lp2} which we prove inductively.
The case $n=1$ is   the reverse of the Littlewood-Paley inequality when $p>1$.
 When $n=1$ and $p\le 1$, then by  \cite[Theorem 2.2.9]{GrafakosMFA}
there is a polynomial $Q$ on $\R^l$ such that
$$
\|f-Q\|_{H^p(\mathbb R^{  l})} \lesssim \Big\|\Big(\sum_{j_1} |\Delta_{j_1 }^{(1)}
(f)|^2\Big)^{1/2}\Big\|_{L^p(\mathbb R^{  l})}<\nf \, .
$$
Since $f$ lies in $ L^2(\R^{ l})$, it follows that $f-Q$ is a locally integrable
function which lies in $H^p(\R^l)$  and thus $\|f-Q \|_{L^p}\lesssim
\|f-Q\|_{H^p(\mathbb R^{  l})}<\nf$. Therefore $Q=0$ and \eqref{lp2} follows.

Assume that the assertion is valid for $n$. We will prove the
case $n+1$. Let $r_k$ be the Rademacher functions reindexed by $k\in \mathbb Z$.
 Applying   \eqref{lp2} to $g=\sum_kf_kr_k$ we obtain
\begin{align*}
&\hspace{-.5in} \int_{\mathbb R^l}\cdots\int_{\mathbb R^l}
\bigg(\sum_k|f_k(x_1,\dots,x_{n})|^2\bigg)^{{ p/2}}dx_1\cdots dx_{n}\\
\lesssim &\int_{\R^l}\cdots\int_{\R^l}\int_0^1
\Big|\sum_kf_k(x_1,\dots,x_n)r_k(t_{n+1} )   \Big|^pdt_{n+1}dx_1\cdots dx_n\\
=&C\int_0^1\int_{\R^l}\cdots\int_{\R^l}|g(x_1,\dots,x_n)|^pdx_1\cdots dx_ndt_{n+1},
\end{align*}
where   we used the property of Rademacher functions; see
for instance \cite[Appendix C]{Grafakos1}. By the induction hypothesis, the preceding expression is bounded by
a multiple of
\begin{align*}
 &\hspace{-.1in}  \int_0^1\int_{(\mathbb R^l)^n} \bigg(\sum_{j_1}\cdots\sum_{j_n}
|\Delta_{j_1}^{(1)}\cdots\Delta_{j_n}^{(n)}g(x_1,\dots,x_n)|^2 \bigg)^{p/2}dx_1\cdots dx_ndt_{n+1}\\
\lesssim&\int_0^1\!\! \int_{(\mathbb R^l)^n} \!  \int_{[0,1]^{n }}  \!
\Big|\sum_{j_1}\cdots\sum_{j_n}
\Delta_{j_1}^{(1)}\cdots\Delta_{j_n}^{(n)}g(x_1,\dots,x_n)   \prod_{i=1}^n r_{j_i}(t_i)
\Big|^p  \! dt_1\cdots dt_n d\vec x \, dt_{n+1}\\
\approx &\int_{(\mathbb R^l)^n} \! \int_{[0,1]^{n+1 }} \!
\Big| \!\! \sum_{j_1,\dots , j_n,k} \!\!
\Delta_{j_1}^{(1)}\cdots\Delta_{j_n}^{(n)}f_k(x_1,\dots,x_n)r_k(t_{n+1})\prod_{i=1}^n r_{j_i}(t_i)
\Big|^p   \! dt_1\cdots dt_{n+1}d\vec x\\
\lesssim& \int_{(\mathbb R^l)^n} \bigg(\sum_{j_1}\cdots\sum_{j_{n }}\sum_k
|\Delta_{j_1}^{(1)}\cdots\Delta_{j_{n}}^{(n)}f_k(x_1,\dots,x_{n})|^2\bigg)^{p/2}dx_1\cdots dx_{n},
\end{align*}
once again the properties  of Rademacher functions were used and $d\vec x= dx_1\cdots dx_n$.

It follows that
\begin{align*}
&\hspace{-.1in} \int_{  (\R^l)^{n+1}}  |f(x_1,\dots,x_{n+1})|^pdx_1\cdots dx_{n+1}\\
 \lesssim &\int_{ (\R^l)^{n+1}} \sup_{t>0} \big|[\vp_t*f(x_1,\dots,x_{n})](x_{n+1})\big|^pdx_{n+1}dx_1\cdots dx_{n}\\
\lesssim &  \int_{  (\R^l)^{n+1}}   \bigg(
\sum_{j_{n+1}}|\Delta_{j_{n+1}}^{(n+1)}f(x_1,\dots,x_{n+1})|^2\bigg) ^{p/2}dx_{n+1}dx_1\cdots dx_{n}\\
\approx&\int_{  (\R^l)^{n+1}}   \bigg(
\sum_{j_{n+1}}|\Delta_{j_{n+1}}^{(n+1)}f(x_1,\dots,x_{n+1})|^2\bigg) ^{p/2}dx_1\cdots dx_{n+1}\\
\lesssim&\int_{  (\R^l)^{n+1}}   \! \bigg(\sum_{j_1}\cdots\sum_{j_n}\sum_{j_{n+1}}
|\Delta_{j_1}^{(1)}\cdots\Delta_{j_n}^{(n)}\Delta_{j_{n+1}}^{(n+1)}
f(x_1,\dots,x_n,x_{n+1})|^2\bigg)^{p/2}\! dx_1\cdots dx_ndx_{n+1},
\end{align*}
where in the last step we use the inequality in the preceding alignment.
To make this argument precise, we   work with
 finitely many terms and then then pass to limit using   Fatou's lemma.
\end{proof}

  \begin{remark}\label{11071}
In both \eqref{lp1} and \eqref{lp2} we do not need 
  the full set of variables. For example, we have
  $$
  \Big\|\Big(\sum_{j_1\in \mathbb Z}\cdots\sum_{j_q\in \mathbb Z}|\Delta_{j_1}^{(1)}\cdots\Delta_{j_q}^{(q)}(f)|^2\Big)^{1/2}\Big\|_{L^p(\mathbb R^{ nl})}
\approx\|f\|_{L^p(\mathbb R^{ nl})}
  $$
  for any $1\le q\le n$ by applying 
  Lemma \ref{GLP} to $f$ as a function of $(x_1,\dots, x_q)$.
 \end{remark}

  \begin{remark}
As a consequence of   \eqref{lp2} one can derive the following inequality:
 $$
 \|f\|_{L^p(\R^{nl}) }\leq C\|f\|_{H^p({\mathbb R}^{l}\times \cdots \times {\mathbb R}^{l})}\ \ \ \ {\rm for}\ \ f\in L^2(\R^{nl})  , \ 0<p\leq 1
 $$
where $H^p(\underbrace{ {\mathbb R}^{l}\times \cdots \times {\mathbb R}^{l}}_{\textup{$n$ times}})$ denotes  the  multiparameter Hardy space; on this see \cite{HLLW}.
\end{remark}

\medskip

 \section{The proof of Theorem \ref{1dil}}\label{0926}

\begin{proof}
  For $1\le k\ne l  \le m,$ we introduce sets
  $$
  U_{k,l} = \Big\{(\xi_1,\ldots,\xi_m)\in(\mathbb R^n)^m\ :\ \max_{j\ne k,l} |\xi_j|
  \le \frac{11}{10}|\xi_k|\le \frac{11}{50m}|\xi_{l}|\Big\}
  $$
  and
  $$
  W_{k,{l}} = \Big\{(\xi_1,\ldots,\xi_m)\in(\mathbb R^n)^m\ :\ \max_{j\ne k,{l}} |\xi_j|
  \le \frac{11}{10}|\xi_k|,\frac{1}{10m}|\xi_{\ell}|\le |\xi_k|\le 2|\xi_{l}|\Big\}.
  $$
  We now construct  smooth homogeneous  of degree zero functions  $\Phi_{k,{l}}$
  and $\Psi_{k,{l}}$     supported in $U_{k,{l}}$ and $W_{k,{l}}$, respectively, and such that
\begin{equation}
  \sum_{1\le k\ne l\le m}\Big(\Phi_{k,{l}}(\xi_1,\dots , \xi_m )+\Psi_{k,{l}}(\xi_1,\dots , \xi_m)\Big)=1
\end{equation}
for   every $(\xi_1,\dots , \xi_m)$
in $\big(\mathbb R^{n}\big)^m\setminus\{0\}; $ such functions can be constructed following
 the hint of Exercise 7.5.4 in  \cite{GrafakosMFA}.
In the support of $\Phi_{k,l}$  the   vector with the largest magnitude is $\xi_{l}$, while in the support
of $\Psi_{k,l}$  the    vector with the largest magnitude is $\xi_{l} $ and the one with the second largest magnitude  is
$\xi_k$. This partition of unity induces the following decomposition of $\si$:
\begin{equation}\label{12.5.bvhuawe}
 \si = \sum_{j=1}^m\sum_{\substack{k=1\\k\neq j}}^m
 \big(\si \, \Phi_{j,k} +   \si \, \Psi_{j,k} \big)\, .
 \end{equation}
 We will prove the required assertion for each piece of
 this decomposition, i.e., for    the
 multipliers $ \si \, \Phi_{j,k}$ and $ \si \, \Psi_{j,k}$ for each pair $(j,k)$ in the previous sum.
 In view of the symmetry of the decomposition, it suffices to consider the case of a  fixed  pair $(j,k)$ in the   sum in \eqref{12.5.bvhuawe}.
 To simplify {the} notation, we fix the {pair $(m,m-1)$;} thus, for the rest of the proof we fix $j=m$ and {$k=m-1$,} and we prove boundedness for the  $m$-linear operators whose symbols are
$ \si_1 =\si \, \Phi_{m,m-1}$ and $ \si_2= \si \, \Psi_{m,m-1}$.
These correspond to the $m$-linear operators
$T_{\si_1}$ and $T_{\si_2}$, respectively. Note     that $\si_1$ is supported in the set where
\[
\max(|\xi_1|, \dots , |\xi_{m-2}|) \le \tf{11}{10}\,  |\xi_{m-1}|
\qquad
\textup{and}
\qquad
 |\xi_{m-1}| \le    \tf{1 }{5m}\,  |\xi_m| \, .
\]
Also $\si_2$ is supported in the set where
\[
 \max(|\xi_1|, \dots , |\xi_{m-2}|) \le \tf{11}{10}\,  |\xi_{m-1}|
\qquad
\textup{and}
\qquad
\tf{1}{10m}\le \tf{|\xi_{m-1}|}{|\xi_m|}\le 2\, .
\]

 Fix a Schwartz function $\theta$ whose Fourier transform is supported in the annulus $\frac{1}{2}\le |\xi|\le 2$ and
$\sum_{j\in \mathbf Z} \widehat \theta (2^{-j}\xi)=1$ for $\xi \in \rn\setminus \{0\}. $
Associated with   $\theta$ we define the Littlewood--Paley operator
$\Delta_j^\theta(g)= g* \theta_{2^{-j}},$ where $\theta_t(x)=
t^{-n}\theta(t^{-1}x) $  for $t>0$.
The function $\theta$ can be extended to the function $\Theta$ defined on $\mathbb R^{nm}$ by setting $\widehat{\Theta}(\vec{\xi}\,)=\widehat{\Theta}(\xi_1,\ldots,\xi_m) = \widehat{\theta}(\xi_1+\cdots+\xi_m).$
For given Schwartz functions $f_j$ we have 
\begin{align*}
  \Delta^\theta_j \big(T_{\sigma_1}(f_1,&\ldots,f_m)\big)(x) \\
=& \int_{\mathbb R^{mn}}\widehat{\theta}(2^{-j}(\xi_1+\cdots+\xi_m))
\sigma_1(\vec{\xi}\,)\widehat{f_1}(\xi_1)\cdots\widehat{f_m}(\xi_m)
 e^{2\pi ix\cdot(\xi_1+\cdots+\xi_m)}
d\vec{\xi}\\
=& \int_{\mathbb R^{mn}}\widehat{\Theta}(2^{-j}\vec{\xi}\,)
\sigma_1(\vec{\xi}\,)\widehat{f_1}(\xi_1)\cdots\widehat{f_m}(\xi_m)
 e^{2\pi ix\cdot(\xi_1+\cdots+\xi_m)}
d\vec{\xi}.
\end{align*}
Note that for all $\vec \xi = (\xi_1,\ldots,\xi_m)$ in the support of the function $\widehat{\Theta}(2^{-j}\vec \xi\,)\sigma_1(\vec \xi\,),$ we always have $2^{j-2}\le |\xi_m|\le 2^{j+2}.$
Therefore we can take a Schwartz function $\eta$ whose Fourier transform
is supported in $\frac{1}{8}\le|\xi_m|\le 8$ and identical to $1$ on $\tf{1}{4}\leq |\xi_m|\leq 4$ and insert the factor $\widehat{\eta}(2^{-j}\xi_m)$ into the above integral without changing the outcome. More specifically
\begin{align*}
\Delta^\theta_j&\big(T_{\sigma_1}(f_1,\ldots,f_m)\big)(x)\\
 =&
\int_{\mathbb R^{mn}}\widehat{\Theta}(2^{-j}\vec{\xi}\,)
\sigma_1(\vec{\xi}\, )\widehat{f_1}(\xi_1)\cdots\widehat{f_{m-1}}(\xi_{m-1})\widehat{\eta}(2^{-j}\xi_m)
\widehat{f_m}(\xi_m)e^{2\pi ix\cdot(\xi_1+\cdots+\xi_m)}d\vec{\xi}.
\end{align*}
Now define
$
\wh{\Psi_*}(\vec \xi\,)= \sum_{|k|\le 4}\widehat{\Psi}(2^{ -k}\vec \xi\,)
$
and note that $\wh{\Psi_*}(2^{-j}\vec \xi\,)$ is equal to $1$
on the annulus $\big\{\vec \xi\in \mathbb R^{mn}  : \ 2^{j-4}\le |\vec \xi\,|\le 2^{j+4}\big\}$ which contains
the      support of $\sigma_1\widehat{\Theta}(2^{-j}\cdot) $. Then we   write
\begin{align*}
&\Delta^\theta_j\big(T_{\sigma_1}(f_1,\ldots,f_m)\big)(x)\\
 =&
 \int_{\mathbb R^{mn}}\widehat{\Psi_*}(2^{-j }\vec{\xi}\,)\widehat{\Theta}(2^{-j}\vec{\xi}\,)
\sigma_1(\vec{\xi}\, )\widehat{f_1}(\xi_1)\cdots\widehat{f_{m-1}}(\xi_{m-1})\widehat{\eta}(2^{-j}\xi_m)
\widehat{f_m}(\xi_m)e^{2\pi ix\cdot(\xi_1+\cdots+\xi_m)}d\vec{\xi}.
\end{align*}
Taking the inverse Fourier transform, we
obtain that
$
\Delta^\theta_j\big(T_{\sigma_1}(f_1,\ldots,f_m)\big)(x)
$
is equal to
\begin{equation}
\label{eq.2A01}
 \int_{(\rn)^m}2^{mnj } (\sigma_1^{j }\widehat{\Psi_*}\wh{\Theta} )\spcheck
 \big(2^{j}(x-y_1), \dots ,2^{j}(x-y_m)\big)
\prod_{i=1}^{m-1} f_i(y_i)\,
(\Delta_j^\eta  f_m)(y_m)\, d\vec y,
\end{equation}
where {$d\vec y= dy_1\cdots dy_m$,} and
$
\sigma_1^j(\xi_1,\xi_2,\dots ,\xi_m)
=\sigma_1(2^j\xi_1 ,2^j\xi_2,\dots ,2^j\xi_m).
$

Recall our assumptions that  $\max\limits_{1\le i \le m} \max\limits_{1\le \ell \le n} \f{1}{\ga_{i\ell}}<r$  and $\max\limits_{1\le i \le m} \max\limits_{1\le \ell \le n} \f{1}{\ga_{i\ell}}<\min(p_1,\dots , p_m)$.
  If  $r>1$ we pick $\rho $ such that $1< \rho < 2$    and   $\max\limits_{1\le i \le m} \max\limits_{1\le \ell \le n}\f{1}{\ga_{i\ell}} <\rho < \min(p_1,\dots , p_m,r ) $. If $r=1$, we set 
$\rho=1$. 

Define a weight for $(y_1,\dots , y_m)  \in (\R^n)^m$ by setting
$$
w_{\vec \ga}(y_1,\dots , y_m) = \prod\limits_{1\le i \le m}
 \prod\limits_{1\le \ell \le n} (1+4\pi^2 |y_{i\ell}|^2)^{\f{\ga_{i\ell}}{2}}\, . 
$$
  Let us first suppose that $\rho>1$.  We   have
{\allowdisplaybreaks
\begin{align*}
&| \De_j^\theta \big( T_{\sigma_{1}}( f_1 ,\dots , f_{m-1} , f_m )\big)(x)|\\
\leq&
\int_{(\rn)^m} \!\!\!\!\!
w_{\vec \ga} \big(2^j(x-y_1),\dots ,2^j(x-y_m)\big)  \,\,
| (\sigma_1^j\, \wh{\Psi_*} \wh{\Theta} )\spcheck(2^{j}(x-y_1),\dots ,2^{j}(x-y_m))|\\
&\qq\qq\qq\qq \times  \frac{2^{mnj} | 
 f_1 (y_1)\cdots  f_{m-1} (y_{m-1})( \Delta_j^\eta f_m)(y_m)|}{ w_{\vec \ga} \big(2^j(x-y_1),\dots ,2^j(x-y_m)\big)} \, d\vec y\\
\leq&
\bigg[\int_{(\rn)^m}\!\!  \big|
\big( w_{\vec \ga } \,
  (\sigma_1^j\, \wh{\Psi_*}\wh{\Theta})\spcheck
  \big) (2^{j}(x  - y_1),\dots,2^{j}(x  - y_m))
\big|^{\rho'}d\vec y\bigg]^{\frac{1}{\rho'}}\\
&\qq \times 2^{mnj}\left(\int_{(\rn)^m}\frac{ | f_1 (y_1)\cdots
 f_{m-1} (y_{m-1})   ( \Delta_j^\eta  f_m) (y_m)|^\rho}{w_{ \rho\vec \ga}
\big( 2^j(x-y_1), \dots , 2^j (x-y_m) \big)} \, d\vec y
\right)^{\frac{1}{\rho}}\\
\leq& C\left(\int_{(\rn)^m} \Big| w_{\vec  \ga   }(y_1,\dots , y_m)
  (\sigma_1^j\, \wh{\Psi})
\spcheck    (y_1,\dots ,y_m) \Big|^{\rho'}d\vec y\right)^{\frac{1}{\rho'}}\\
&\qq \times \left(\int_{(\rn)^m}\frac{2^{mnj}| f_1 (y_1)\cdots
 f_{m-1} (y_{m-1})  (\Delta_j^\eta f_m) (y_m)|^\rho}{\big( \prod_{\ell=1}^n (1+2^j|x_\ell-y_{1\ell} |)^{ \rho \ga_{1\ell} } \big)\cdots \big( \prod_{\ell=1}^n (1+2^j|x_\ell-y_{m\ell} |)^{ \rho \ga_{m\ell} } \big) }\, d\vec y
\right)^{\frac{1}{\rho}}\\
\leq& C\Big\| \prod_{i=1}^m\prod_{\ell=1}^n (I-\p_{\xi_{i\ell} }^2)^{\f{\ga_{i\ell}}{2}} (\sigma_1^j\, \wh{\Psi_*}\wh{\Theta}) \Big\|_{L^{\rho } }   \prod_{i=1}^{m-1} \left(\int_{\rn}\frac{ 2^{jn} |  f_{i} (y_{i})|^\rho}{   \prod_{\ell=1}^n (1+2^j|x_\ell-y_{i\ell} |)^{ \rho \ga_{i\ell} }        }\, dy_{i}\right)^{\frac{1}{\rho}} \\
&\qqq\qqq\qqq\qqq\times \left(\int_{\rn}\frac{ 2^{jn} | (\Delta_j^\eta
 f_m) (y_m)|^\rho}{     \prod_{\ell=1}^n (1+2^j|x_\ell-y_{m\ell} |)^{ \rho \ga_{m\ell} }     }\, dy_m\right)^{\frac{1}{\rho}}\\
\leq&C'
\Big\| \prod_{i=1}^m\prod_{\ell=1}^n (I-\p_{\xi_{i\ell} }^2)^{\f{\ga_{i\ell}}{2}} (\sigma_1^j\, \wh{\Psi_*}\wh{\Theta}) \Big\|_{L^{\rho } }   \bigg[\prod_{i=1}^{m-1}
  \mathcal M (|f_i|^\rho) (x)  ^{\frac{1}{\rho}} \bigg] \mathcal M (|\De_j^\eta f_m|^\rho) (x)  ^{\frac{1}{\rho}} 
\end{align*}} 
\noindent  where $\mathcal M$ is the strong maximal function given as $\mathcal M= M^{(1)}\circ \cdots \circ M^{(n)}$,
where $M^{(j)}$ is the Hardy-Littlewood maximal operator acting in the $j$th variable. 
Here we made use of the hypothesis    that $\ga_{i\ell} \rho >1$ and we used the Hausdorff-Young inequality, which is 
possible since $1\le \rho<2$. Now using \eqref{Lemm11ineq} we obtain 
\[
\Big\| \prod_{i=1}^m\prod_{\ell=1}^n (I-\p_{\xi_{i\ell} }^2)^{\f{\ga_{i\ell}}{2}} (\sigma_1^j\, \wh{\Psi_*}\wh{\Theta}) \Big\|_{L^{\rho } }  
\lesssim
\Big\| \prod_{i=1}^m\prod_{\ell=1}^n (I-\p_{\xi_{i\ell} }^2)^{\f{\ga_{i\ell}}{2}}
 (\sigma(2^j (\cdot)) \, \wh{\Psi_*} ) \Big\|_{ L^{r } }  \lesssim A \, . 
\]
 We now turn to the case where $r=1$ in which case $\rho=1$. We choose $\ga_{i\ell}'<\ga_{i\ell}$ and $\de>0$ such that 
$$
\f{1}{\ga_{i\ell} }=\f{1}{\ga_{i\ell}'+\de} <\f{1}{\ga_{i\ell}'} <\f{1}{\ga_{i\ell}'-\de}  < \frac{1}{r}=1
$$
for all $i,\ell$. The preceding argument with $\ga_{i\ell}'-\de$ in place of $\ga_{i\ell}$ yields that 
  is bounded by 
$$
| \De_j^\theta \big( T_{\sigma_{1}}( f_1 ,\dots ,  f_m )\big) | \le C'
\Big\| \prod_{i=1}^m\prod_{\ell=1}^n (I-\p_{\xi_{i\ell} }^2)^{\f{\ga_{i\ell}'-\de}{2}} (\sigma_1^j\, \wh{\Psi_*}\wh{\Theta}) \Big\|_{L^{1 } }   \bigg[\prod_{i=1}^{m-1}
  \mathcal M (|f_i| )     \bigg] \mathcal M (|\De_j^\eta f_m|  ) \, . 
  $$
  In view of   \eqref{Lemm11ineq2} we obtain 
\[
\Big\| \prod_{i=1}^m\prod_{\ell=1}^n (I-\p_{\xi_{i\ell} }^2)^{\f{\ga_{i\ell}'-\de}{2}} (\sigma_1^j\, \wh{\Psi_*}\wh{\Theta}) \Big\|_{L^{1} }  
\lesssim
\Big\| \prod_{i=1}^m\prod_{\ell=1}^n (I-\p_{\xi_{i\ell} }^2)^{\f{\ga_{i\ell}'+\de}{2}} (\sigma(2^j (\cdot)) \, \wh{\Psi } ) \Big\|_{L^1}  \lesssim A \, . 
\]

Thus,  we have   obtained the estimate
\begin{equation*}
 | \De_j^\theta  \big(T_{\sigma_{1}}( f_1 ,\dots , f_{m-1} , f_m ) \big)(x)| 
 \lesssim A
 \bigg[\prod_{i=1}^{m-1}
  \mathcal M (|f_i|^\rho) (x)  ^{\frac{1}{\rho}} \bigg] \mathcal M (|\De_j^\eta f_m|^\rho) (x)  ^{\frac{1}{\rho}} 
\end{equation*}
from which it follows that 
\begin{equation*}
\bigg(\sum_{j\in \mathbb Z} | \De_j^\theta T_{\sigma_{1}}( f_1 ,\dots , f_{m-1} , f_m ) |^2\bigg)^{\f12}
 \lesssim A
 \bigg[\prod_{i=1}^{m-1}
  \mathcal M (|f_i|^\rho)   ^{\frac{1}{\rho}} \bigg] 
  \bigg(\sum_{j\in \mathbb Z} \mathcal M (|\De_j^\eta f_m|^\rho)    ^{\frac{2}{\rho}} \bigg)^{\f12}\, .
\end{equation*}
The claimed bound follows by applying H\"older's inequality with exponents $p_1,\dots, p_m$ and using the 
boundedness of $\mathcal M$ on $L^{p_i/\rho}$, $i=1,\dots ,m$, and the Fefferman-Stein \cite{FS}  vector-valued 
Hardy-Littlewood maximal  function inequality on $L^{p_m/\rho}$.  (Note     $1<2/\rho \le 2$.)

 Next we  deal with $\sigma_2$. Using the notation introduced earlier, we write
\[
T_{\sigma_2}(f_1,\dots,f_{m-1},f_m)=\sum_{j\in \mathbb Z}
T_{\sigma_2}(f_1,\dots,f_{m-1},\De_j^\theta f_m )\, .
\vspace{-.1in}\]
We introduce another Littlewood--Paley operator {$\De_j^\zeta$,} which is
given on the Fourier transform by multiplying with a bump
$\wh{\zeta}(2^{-j}\xi)$, where $\wh{\zeta}$ is equal to one on the
annulus $\{\xi\in \rn:\,\, \tf1{2^k}\le |\xi|
\le 4\}$ with $\f1{2^k}\le\tf{1}{20m}$,
vanishes off the annulus $\frac{1}{2^{k+1}}\le |\xi|\le 8$,
and $\sum_{j}\wh\zeta(2^{-j}\xi)=k+3$.
The key observation in this case is that  
\begin{equation}
\label{eq.3A4}
T_{\sigma_2}(f_1,\dots,f_{m-1},\De_j^\theta f_m )
=
T_{\sigma_2}\big(f_1,\dots , f_{m-2},\Delta_j^\zeta
 f_{m-1} ,\Delta_j^\theta f_m \big)\, .
\end{equation}

As in the previous case, we  have
\begin{align}
\notag
&\hspace{-.7in} T_{\sigma_2}\big(f_1,\dots , f_{m-2},\Delta_j^\zeta
 f_{m-1} ,\Delta_j^\theta f_m \big)(x)\\
 \label{eq.2A02}
=& \int_{(\mathbf R^{ n})^m}\!\!\!\!\!\!
{
\sigma_2(\vec \xi \, )
 \prod_{i=1}^{m-2} \widehat{f_i}(\xi_i) \,\,
\widehat{ \Delta_j^\zeta
 f_{m-1} }(\xi_{m-1}) \widehat{ \Delta_j^\theta
 f_m }(\xi_m)
e^{2\pi ix\cdot(\xi_1 +\cdots
+\xi_m)}
}
\, d\vec \xi.
\end{align}
The  integrand in the right-hand side of \eqref{eq.2A02}
is supported in
$
\frac{1}{2}  2^j\le|\xi_1|+\dots + |\xi_m|\le \frac{11m}{5}  2^j.
$
Thus one may insert the factor 
$$
\wh{\Psi_*}(2^{-j }\xi_1,\dots , 2^{-j }\xi_m)=
\sum_{|k|\le m+1}\wh{\Psi}(2^{-j-k}\xi_1,\dots , 2^{-j-k}\xi_m)
$$
in  the integrand.

A similar calculation as in the case for $\si_1$ yields the
estimate
$$
 |T_{\sigma_{2}}(f_1,\dots ,f_{m-2},
\Delta_j^\zeta f_{m-1} ,\Delta_j^\theta f_m ) |
\lesssim 
 A  \bigg(\prod_{i=1}^{m-2}
 \mathcal  M( |f_i|^\rho  )^{\f{1}{\rho}}   \bigg)
\mathcal   M( | \Delta_j^\zeta f_{m-1} |^\rho  )^{\f{1}{\rho}}
 \mathcal  M( | \Delta_j^\theta  f_m|^\rho   )^{\f{1}{\rho}}  \, .
$$

\noindent Summing over $j$ and taking $L^p$ norms {yields}
\begin{equation*}
\begin{split}
&\!\!\!\!\big\| T_{\sigma_{2}}(f_1,\dots ,f_{m-1},f_m)\big\|_{L^p(\rn)}\\
\leq &C\, A \,\Big\|     \bigg[ \prod_{i=1}^{m-2}
 \mathcal M (|f_i|^\rho)  ^{\frac{1}{\rho}} \bigg]
  \sum_{j\in \zzz}   \mathcal {M} (|\Delta_j^\theta
 f_{m-1} |^\rho  )^{\frac{1}{\rho}}   \mathcal {M} ( |\Delta_j^\eta
 f_{m} |^\rho ) ^{\frac{1}{\rho}}
\Big\|_{L^p }\\
\leq &C\, A \,\Big\| 
 \bigg[  \prod_{i=1}^{m-2}  \mathcal M (|f_i|^\rho)  ^{\frac{1}{\rho}} \bigg]
  \bigg( \sum_{j\in \zzz}     \mathcal {M} (|\Delta_j^\theta
 f_{m-1} |^\rho  )^{\frac{2}{\rho}} \bigg)^{\f12} \bigg(\sum_{j\in \zzz} \mathcal {M} ( |\Delta_j^\eta
 f_{m} |^\rho ) ^{\frac{2}{\rho}} \bigg)^{\f12} 
\Big\|_{L^p(\rn)}
\end{split}
\end{equation*}
Applying H\"older's inequality, the  
boundedness of $\mathcal M$ on $L^{p_i/\rho}$, $i=1,\dots ,m-1$, and the Fefferman-Stein \cite{FS} vector-valued 
Hardy-Littlewood maximal  function inequality on $L^{p_{m-1}/\rho}$ or on $L^{p_m/\rho}$  (noting that  
  $1<2/\rho\le 2$)
concludes the proof of the theorem.
 \end{proof}

 \begin{remark}\label{REMARK}
 In case I we obtained the estimate
 \begin{equation*}
 | \De_j^\theta  \big(T_{\sigma_{1}}( f_1 ,\dots , f_{m-1} , f_m ) \big) | \lesssim A
 \bigg[\prod_{i=1}^{m-1}
  \mathcal M (|f_i|^\rho)    ^{\frac{1}{\rho}} \bigg] \mathcal M (|\De_j^\eta f_m|^\rho)   ^{\frac{1}{\rho}} 
{\color{blue}
.}
\end{equation*}
 
 In case II we obtained the estimate
 $$
 |T_{\sigma_2}(f_1,\dots,f_{m-1},\De_j^\theta f_m )| \lesssim 
 A  \bigg(\prod_{i=1}^{m-2}
 \mathcal  M( |f_i|^\rho  )^{\f{1}{\rho}}   \bigg)
\mathcal   M( | \Delta_j^\zeta f_{m-1} |^\rho  )^{\f{1}{\rho}}
 \mathcal  M( | \Delta_j^\theta  f_m|^\rho   )^{\f{1}{\rho}} 
 {\color{blue}
.}
 $$
 
By symmetry for any $k_0\neq j_0$ in $\{1,\dots , m\}$ we have for $\si   \Phi_{j_0,k_0}$
  \begin{equation*}
 | \De_j^\theta  \big(T_{\si   \Phi_{j_0,k_0}}( f_1 ,\dots ,   f_m ) \big) | \lesssim A
 \bigg[\prod_{\substack { 1\le i \le   m \\  i \neq j_0 }}
  \mathcal M (|f_i|^\rho) ^{\frac{1}{\rho}} \bigg] \mathcal M (|\De_j^\eta f_{j_0}|^\rho)    ^{\frac{1}{\rho}} 
\end{equation*}
and for $\si   \Psi_{j_0,k_0}$
  \begin{equation*}
 |   T_{\si   \Psi_{j_0,k_0}}( f_1 ,\dots , \De_j^\theta f_{j_0} , \dots ,  f_m ) | \lesssim A
 \bigg[\prod_{\substack { 1\le i \le   m \\  i \neq j_0 \\ i\neq k_0 }}
  \mathcal M (|f_i|^\rho)   ^{\frac{1}{\rho}} \bigg] \mathcal M (|\De_j^\eta f_{j_0}|^\rho)   ^{\frac{1}{\rho}} 
  \mathcal   M( | \Delta_j^\zeta f_{k_0} |^\rho  )^{\f{1}{\rho}}. 
\end{equation*}
 \end{remark}

\section{The proof of Theorem~\ref{End}}

\begin{proof}
For $1\le k\ne l\le m,$ recall the sets $ U_{k,l}$ and $W_{k,l} $
and the functions  $\Phi_{k,l}$ and $\Psi_{k,l}$
  in the proof of Theorem \ref{1dil}.
 Letting $\sigma^1_{k,l}=\sigma\Phi_{k,l}$ and $\sigma^2_{k,l}=\sigma\Psi_{k,l} $, we write
  $$
  \sigma=\sum_{1\le k\ne l\le n}\big(\sigma^1_{k,l}+\sigma^2_{k,l}\big).
  $$
By the symmetry, it suffices to consider the case where $k=m-1$ and $l=m.$ We  establish  the claimed estimate
for $T_{\sigma_1}$ and $T_{\sigma_2}$ with
  $\sigma_1 = \sigma^1_{m-1,m}$ and $\sigma_2 = \sigma^2_{m-1,m}.$

  We first consider $T_{\sigma_1}(f_1, \dots , f_m)$, where
$f_j$ are fixed Schwartz functions.
We will   prove 
\begin{equation}\label{equ:HpTSigmEND}
\norm{\Big(\sum_j\Delta_j^\tht (T_{\sigma_1}(f_1,\ldots,f_m))|^2\Big)^{1/2}}_{L^{1/m,\nf}(\rn)}\lesssim A\norm{f_1}_{H^{ 1}(\rn)}\cdots\norm{f_m}_{H^{1}(\rn)}.
\end{equation}

Let $H^{1/m,\nf}$ denote the weak Hardy space of all bounded tempered distributions whose
smooth maximal function lies in   weak $L^{1/m}$. 
Given $0<p<\nf$,
for   $F$ in $  L^2(\rn)$ there is a polynomial $Q$ on $\rn$ such that
\begin{equation}\label{Q=0}
\norm{F-Q}_{L^{p,\nf}(\rn)}\le C_{p,n} \norm{F-Q}_{H^{p,\nf}(\rn)} \approx \Big\|\Big(\sum_j|\De_j(F)|^2\Big)^{1/2}\Big\|_{L^{p,\nf}(\rn)},
\end{equation}
 by a    result  of He \cite{He2014}.    
But the fact that $F$ lies in $L^2$ implies that $Q=0$. 
 Applying \eqref{Q=0} with $F=  T_{\sigma_1}(f_1,\ldots,f_m)$, for which 
 we observe that 
$
\| T_{\sigma_1}(f_1,\ldots,f_m)\|_{  L^2(\rn) }<\nf\,
$
for   Schwartz functions $f_j$, 
   we conclude from  
  \eqref{equ:HpTSigmEND} that \eqref{equ:TSigmaESTH1}  holds   for   $\sigma_1$.

To verify \eqref{equ:HpTSigmEND}, we recall  \eqref{eq.2A01} and 
set $\om_{\ga_i}( y) =   (1+4\pi^2  | y |^2  )^{\f{\ga_{i }}{2}}$ for $y\in \R^n$.
Choose $\ga_j'$ and $\delta>0$ such that 
$n< \gamma_i'-\delta <\gamma_i'<\ga_i'+\de=\ga_i$  for all $1\le i\le m$.

Now we   rewrite 
\begin{align}
&  |\Delta^\theta_j\big(T_{\sigma_1}(f_1,\ldots,f_m)\big)(x) |   \notag \\
 \leq&
\!\! \int\limits_{(\rn)^m}\!
\Big\{  \prod_{i=1}^m  \om_{\ga_i'-\de} (2^j(x-y_i)) \Big\}
| (\sigma_1^{j }\, \widehat{\Psi_*}\wh{\Theta} )\spcheck(2^{{j }}(x\! -\! y_1),\dots ,2^{{j }}(x\! -\! y_m))|   \notag  \\
&\qq\qq  \times  \frac{2^{mn{j}} |f_1(y_1)|\cdots |f_{m-1}(y_{m-1})|| (\Delta_j^\eta f_m)(y_m)|}
{  \prod_{i=1}^m  \om_{\ga_i'-\de} (2^j(x-y_i)) } \, d\vec y  \notag\\
\lesssim&\,\,\, \Big\| \Big( \prod_{i=1}^m
\omega_{  \gamma_i'-\de}  \Big)   (\sigma_1^{j }\, \widehat{\Psi_*}\wh{\Theta} )\spcheck      \Big\|_{\li}
\bigg( \prod_{i=1}^{m-1}
  M( f_i  )(x) \bigg)
  M( \Delta_j^\eta  f_m   )(x)  \, , \label{MMMJJJ}
\end{align}
as a consequence of the fact that $  \gamma_i'-\de>n $ for all $1\le i\le m$. Here $M$ is the 
uncentered Hardy-Littlewood maximal operator. In view of the Hausdorff-Young inequality, the first factor in 
  \eqref{MMMJJJ} is bounded by 
$$
\Big\|   \prod_{i=1}^m (I-\De_{\xi_i})^{\f{\ga_i'-\de}{2}}
    \big(\sigma_1^{j }\, \widehat{\Psi_*}\wh{\Theta} \big)       \Big\|_{L^1} \lesssim 
   \Big\|   \prod_{i=1}^m (I-\De_{\xi_i})^{\f{\ga_i'+\de}{2}}
    \big(\sigma (2^{j }(\cdot)) \, \widehat{\Psi }  \big)       \Big\|_{L^1}   \lesssim A
$$ 
where the penultimate inequality is a consequence of  \eqref{Lemm11ineq2} and that $\ga_i'+\de=\ga_i$.

Thus, we proved
\begin{equation*}
 |\Delta^\theta_j\big(T_{\sigma_1}(f_1,\ldots,f_m)\big)|
 \lesssim
 A \bigg( \prod_{i=1}^{m-1}
  M( f_i ) \bigg)    M( \Delta_j^\eta  f_m  ) .
\end{equation*}
 Using the preceding inequality we obtain
\begin{eqnarray*}
& &\hspace{-.7in} 
\big\|T_{\sigma_{1}}(f_1,\dots,f_{m-1},f_m)\big\|_{H^{1/m,\nf}(\rn)}\\
&\lesssim &\Big\| \Big\{\sum_j
| \Delta^\theta_j\big(T_{\sigma_1}(f_1,\ldots,f_m)\big) |^2 \Big\}^{\f12}\Big\|_{L^{1/m,\nf}(\rn)}\\
&\lesssim & A\Big\| \Big\{\sum_j   M( \Delta_j^\eta
 f_m   )^{ {2} }\Big\}^{\f12} \Big\|_{L^{1,\nf}(\rn)}
\prod_{i=1}^{m-1}  \big\|
M( f_i )   \big\|_{L^{1,\nf}(\rn)} \\
&\lesssim & A\Big\| \Big\{\sum_j    |\Delta_j^\eta
 f_m   |  ^{ {2} }\Big\}^{\f{1}{2}} \Big\|_{L^{1}(\rn)}
\prod_{i=1}^{m-1} \|f_i \|_{L^{1}(\rn)}   \\
&\lesssim &  A
\prod_{i=1}^{m } \|f_i \|_{H^{1}(\rn)}\, .
 \end{eqnarray*}
This   proves   estimate \eqref{equ:TSigmaESTH1} for $\si_1$.

 Next we  deal with $\sigma_2$. From \eqref{eq.3A4}, we have
\[
T_{\sigma_2}(f_1,\dots,f_{m-1},f_m)=\sum_{j\in \mathbb Z}
T_{\sigma_2}(f_1,\dots,f_{m-1},\De_j^\theta f_m ),
\vspace{-.1in}\]
where 
$T_{\sigma_2}\big(f_1,\dots , f_{m-2},\Delta_j^\zeta
 f_{m-1} ,\Delta_j^\theta f_m \big)$ 
is defined in \eqref{eq.2A02}.
A similar calculation as in the case for $\si_1$ yields the
estimate
$$
 |T_{\sigma_{2}}(f_1,\dots ,f_{m-2},
\Delta_j^\zeta f_{m-1} ,\Delta_j^\theta f_m ) |
\lesssim\,\,
 A  \bigg(\prod_{i=1}^{m-2}
  M( f_i  )   \bigg)
   M( \Delta_j^\zeta  f_{m-1}   )
   M( \Delta_j^\theta  f_m   )  \, .
$$

Summing over $j$, taking $L^{1/m,\nf}$ quasinorms and applying the Littlewood-Paley
characterization of $H^1$   we deduce
\begin{align*}
 & \hspace{-.1in}\big\| T_{\sigma_{2}}(f_1,\dots ,f_{m-1},f_m)\big\|_{L^{1/m,\nf}(\rn)}\\
& \lesssim  A\Big\| \prod_{i=1}^{m-2}
  M( f_i  )
  \sum_{j\in \mathbb Z}   M\big( \Delta_j^\zeta
 f_{m-1}   \big)\  M\left( \Delta_j^\theta
 f_{m}   \right)
\Big\|_{L^{1/m,\nf}(\rn) }\\
&\lesssim
{ A}  \Big\| \Big\{\prod_{i=1}^{m-2}
   M( f_i ) \Big\}
 \Big\{\sum_{j\in \mathbb Z}    M\big( \Delta_j^\zeta
 f_{m-1}   \big) ^{ 2 }
   \Big\}^{\f12}
   \Big\{\sum_{j\in \mathbb Z}    M\left( \Delta_j^\theta
 f_{m}   \right) ^{2}
   \Big\}^{\f12}
\Big\|_{L^{1/m,\nf} (\rn)} \\
&\lesssim
{ A}\Big( \prod_{i=1}^{m-2}  \big\|
   M( f_i )  \big\|_{L^{1,\nf} } \Big)
 \Big\|   \Big\{\sum_{j\in \mathbb Z}    M\big( \Delta_j^\zeta
 f_{m-1}   \big) ^{ 2 }
   \Big\}^{\!\! \f12} \Big\|_{L^{1,\nf} }  \Big\|
   \Big\{\sum_{j\in \mathbb Z}    M\big( \Delta_j^\theta
 f_{m}   \big) ^{2}
   \Big\}^{\!\!\f12}
\Big\|_{L^{1 ,\nf}  }\\
&\lesssim
{ A}\Big( \prod_{i=1}^{m-2}  \big\|
   f_i  \big\|_{L^{1 }(\rn)} \Big)
 \Big\|   \Big\{\sum_{j\in \mathbb Z}    \big| \Delta_j^\zeta
 f_{m-1}   \big| ^{ 2 }
   \Big\}^{\f12} \Big\|_{L^{1 }(\rn)}  \Big\|
   \Big\{\sum_{j\in \mathbb Z}    \left| \Delta_j^\theta
 f_{m}   \right| ^{2}
   \Big\}^{\f12}
\Big\|_{L^{1  }(\rn) }\\
&\lesssim
{ A} \prod_{i=1}^{m }  \big\|
   f_i  \big\|_{H^{1 }(\rn)}  \, .
\end{align*}

This concludes the proof of   Theorem \ref{End}.

\end{proof}

\section{The proof of Theorem~\ref{Tensor}}
  We provide the proof of Theorem~\ref{Tensor} next,
which is similar to the proof of Theorem \ref{1dil} but could be
read independently. 


Since the detailed proof of Theorem~\ref{Tensor} is notationally cumbersome,  we first present a proof in the case where $m=4$ and $n=3$, i.e., the case of $4$ variables and $3$ coordinates. This case captures all the ideas of the general case. Then we discuss the 
general case at the end. 

Consider the following  matrix of the    coordinates of all variables:  
$$
 \begin{bmatrix}
    \xi_{11} & \xi_{12} &     \xi_{13} \\
    \xi_{21} & \xi_{22}   & \xi_{23} \\
   \xi_{31} & \xi_{32}    & \xi_{33} \\
      \xi_{41} & \xi_{42}    & \xi_{43}  
\end{bmatrix}
=
\begin{bmatrix}
    \xi_{1 }  \\
    \xi_{2 } \\
   \xi_{3 }   \\
      \xi_{4 }  
\end{bmatrix}\, . 
$$

Along each column we encounter two cases: the case where the largest coordinate is larger than all the other ones 
(case I) and the other case where the largest coordinate is comparable to the second largest (case II). 
Such a splitting along all columns produces 8 cases. We only study a representative of these 8 cases, and 
in each one of those we make an 
arbitrary  assumption about the largest variable. The case below illustrates the general one. Assume that:
\begin{itemize}
\item along column 1: case I (largest  in modulus variable is $\xi_{41} $);
\item  along column 2: case II  (largest  in modulus variable is $\xi_{42} $ and second largest is $\xi_{12}$);
\item  along column 3: case I  (largest  in modulus variable is $\xi_{23} $). 
\end{itemize}
We denote the symbol associated with this case by  
$$
\tau=\si^{41,(42,12), 23}_{I,II,I}. 
$$
This symbol is obtained   by multiplying  $\si$ by a function of the form 
$$
\Phi \Big(  \f{|\xi_{11}| }{|\xi_{41}| } ,  \f{|\xi_{21}| }{|\xi_{41}| }, \f{|\xi_{31}| }{|\xi_{41}| } \Big)
\Phi \Big(  \f{|\xi_{12}| }{|\xi_{42}| } ,  \f{|\xi_{22}| }{|\xi_{42}| }, \f{|\xi_{32}| }{|\xi_{42}| } \Big)
\Psi  \Big(   \f{|\xi_{12}| }{|\xi_{42}| }    \Big) 
\Phi \Big(  \f{|\xi_{13}| }{|\xi_{23}| } ,  \f{|\xi_{33}| }{|\xi_{23}| }, \f{|\xi_{43}| }{|\xi_{23}| } \Big)
$$
where $\Phi(u_1,u_2,u_3)$ is supported in 
 $\big[0,\f{11}{200}]\times[0, \f{11}{200}]\times[0, \f{1}{20} \big]$ while 
$\Psi(u )$ is supported in $\big[\f{1}{40}, 2\big]$; see the proof of Theorem \ref{1dil} or 
\cite{GrafakosMFA} (pages 570-571 or Exercise 7.5.4).

Fix a Schwartz function $\theta$ whose Fourier transform is supported in  $[\frac{1}{2} , 2]\cup[-2,-\f12 ]$ and satisfies 
$\sum_{j\in \mathbf Z} \widehat \theta (2^{-j}v)=1$ for $v \in \mathbb R\backslash \{0\}. $
Associated with   $\theta$ we define the Littlewood--Paley operator
$\Delta_j^{(i)}(f)= f*_i \theta_{2^{-j}},$ where $\theta_t(u)=
t^{-n}\theta(t^{-1}u) $  for $t>0$ and $*_i$ denotes the convolution in the $i$th variable.
In a Littlewood-Paley operator $\De_j^{(k)}$ the upper letter inside the parenthesis indicates the coordinate on which it acts, so $1\le k\le 3$.
We write
$$
T_{\tau}(f_1,f_2,f_3,f_4)=  \sum_{j_1} \sum_{j_2} \sum_{j_3} T_{\tau} \big(  f_1,\De_{j_3}^{(3)} f_2,f_3,  \De_{j_2}^{(2)}\De_{j_1}^{(1)}f_4\big)
$$
and we have 
\begin{align*}
&\qquad  T_\tau \big(   f_1,\De_{j_3}^{(3)} f_2,f_3,  \De_{j_2}^{(2)}\De_{j_1}^{(1)}f_4\big)(x) =\\
&  \int_{\mathbb R^{12}}   
\tau(\vec \xi\, )\widehat{f_1}(\xi_1) \widehat{\theta}(2^{-j_3} \xi_{23})  \widehat{f_2}(\xi_2)\widehat{f_3}(\xi_3) \widehat{\theta}(2^{-j_2} \xi_{42}) \widehat{\theta}(2^{-j_1} \xi_{41}) \widehat{f_4}(\xi_4)
e^{2\pi ix\cdot(\xi_1+\xi_2+\xi_3+\xi_4)} d\vec{\xi} .
\end{align*}
Since $\xi_{41}$ is the largest variable among $\xi_{11},\xi _{21},\xi_{31}, \xi_{41}$, we have that 
$$
|\xi_{41}| \le |\xi_{11}|+|\xi_{21}|+|\xi_{31}|+| \xi_{41}| \le \f{232}{200} |\xi_{41}|
$$
and   since $\xi_{42}$ is the largest variable among $\xi_{12},\xi _{22},\xi_{32}, \xi_{42}$, we have that 
$$
|\xi_{42}| \le  |\xi_{12}|+|\xi_{22}|+|\xi_{32}|+| \xi_{42}| \le \f{232}{200} |\xi_{42}|.
$$
Likewise 
$$
|\xi_{23}| \le  |\xi_{13}|+|\xi_{23}|+|\xi_{33}|+| \xi_{43}| \le   \f{232}{200} |\xi_{23}|  \, .
$$
We may therefore   insert in the preceding integral the function 
 $$
\widehat{\Omega}\big(D_{-j_1,-j_2,-j_3} (\xi_1,\xi_2,\xi_3,\xi_4)\big)  =
\widehat{\Theta }(2^{-j_1}(\xi_{11}+\xi_{21}+\xi_{31}+\xi_{41}))
\widehat{\Theta }(2^{-j_3}(\xi_{13}+\xi_{23}+\xi_{33}+\xi_{43})), 
$$ 
where $\widehat{\Theta} (u) =  \widehat{\theta} (u/2) +\widehat{\theta} (u) +\widehat{\theta} (2u) $; notice that $\widehat{\Theta}$ 
equals $1$ on the support of $\widehat{\theta}$. 
We denote by $\widetilde {\De}_j$ the Littlewood-Paley operators  associated to $\Theta$. 
For the same reason we may also insert the function 
$$
\begin{aligned}
&\widehat{\Psi^*}\big( D_{-j_1,-j_2,-j_3}(\xi_1,\xi_2,\xi_3),\xi_4\big) 
\\
&= \widehat{\Psi_1^*}(2^{-j_1}(\xi_{11},\xi_{21},\xi_{31},\xi_{41}))\widehat{\Psi_2^*}(2^{-j_2}(\xi_{12},\xi_{22},\xi_{32},\xi_{42}))\widehat{\Psi_3^*}(2^{-j_3}(\xi_{13},\xi_{23},\xi_{33},\xi_{43}))
\end{aligned}
$$
where 
$$
\wh{\Psi_\ell^*}(u_1,u_2,u_3,u_4)= \sum_{|k|\le 1}\widehat{\Psi_\ell}(2^{ -k} (u_1,u_2,u_3,u_4) )\, , 
$$
and $\Psi_\ell$ is as in the hypotheses of the theorem. 
Let 
$$
 D_{j_1,j_2,j_3} (\xi_1,\xi_2,\xi_3,\xi_4)=
 \left( 
  \begin{bmatrix}
    \xi_{11} & \xi_{12} &     \xi_{13} &\xi_{14} \\
    \xi_{21} & \xi_{22}   & \xi_{23} &\xi_{24} \\
   \xi_{31} & \xi_{32}    & \xi_{33} &\xi_{34} \\
      \xi_{41} & \xi_{42}    & \xi_{43}&\xi_{44} 
\end{bmatrix}
\begin{bmatrix}
   2^{j_1}  \\
   2^{j_2} \\
 2^{j_3}\\
 1
\end{bmatrix} 
\right)
$$
and
$$
 \tau^{j_1,j_2,j_3} (\xi_1,\xi_2,\xi_3,\xi_4)  
 = \tau \Big( D_{j_1,j_2,j_3} (\xi_1,\xi_2,\xi_3,\xi_4)\Big).
$$
 
Additionally, in case II there is  the  second largest variable which is comparable to the largest one. 
Therefore we can take a Schwartz function $\eta$ whose Fourier transform
is supported in $[\frac{1}{256}, 8] \cup [-8,-\f1{256}]$ and identical to $1$ on $[\frac{1}{128}, 4] \cup [-4,-\f1{128}]$ and insert the factor $\widehat{\eta}(2^{-j_2}\xi_{12})$ into the above integral without changing the outcome. Let us denote the 
Littlewood-Paley operator associated with $\eta$  by $\overline{\De}_j$.

We may therefore rewrite
\begin{align*}
  T_{\tau} \big(  f_1,\De_{j_3}^{(3)} f_2,f_3,  \De_{j_2}^{(2)}\De_{j_1}^{(1)}f_4\big)
\,\,= & \,\,  
 T_{\tau}  \big(\overline{\De}_{j_2}^{(2)} f_1,\De_{j_3}^{(3)} f_2,f_3,  \De_{j_2}^{(2)}\De_{j_1}^{(1)}f_4\big) \\
=&\,\, \widetilde\De_{j_1}^{(1)}\widetilde\De_{j_3}^{(3)}
 T_{\tau}  \big(\overline{\De}_{j_2}^{(2)} f_1,\De_{j_3}^{(3)} f_2,f_3,  \De_{j_2}^{(2)}\De_{j_1}^{(1)}f_4\big)\, .
\end{align*}
Manipulations with the Fourier transform give that the above can be expressed as 
\begin{align*}
&  \int_{\R^{12} }  2^{4(j_1+j_2+j_3) }\Big(\tau^{j_1,j_2,j_3}  \widehat{\Psi^*} \,\,
 \widehat{\Omega}     \Big)\spcheck   \!
 \big(  D_{j_1,j_2,j_3} (x-y_1,x-y_2,x-y_3,x-y_4) \big)    \\
&\qquad  (\overline{\De}_{j_2}^{(2)} f_1 )(y_1) (\De_{j_3}^{(3)} f_2 )(y_2)  f_3(y_3)(  \De_{j_2}^{(2)}\De_{j_1}^{(1)}f_4)(y_4)
d y_1 dy_2 dy_3 dy_4\, . 
\end{align*}
 If $r=1$, set $\rho=1$. If   $r>1$  pick   $\rho$ 
  such that $1< \rho <2 $ and that  
  $$\max\limits_{1\le i \le m} \max\limits_{1\le \ell \le n}\f{1}{\ga_{i\ell}} <\rho < \min(p_1,\dots , p_m,r).$$  

Setting $\om_{\be}( y) =   (1+4\pi^2  | y |^2  )^{\f{\be}{2}}$ for $y\in \R $, we   write 
{\allowdisplaybreaks
\begin{align*}
&  \Big| T_{\tau} \big(  f_1,\De_{j_3}^{(3)} f_2,f_3,  \De_{j_2}^{(2)}\De_{j_1}^{(1)}f_4\big) (x_1,x_2,x_3) \Big|   \notag \\
 \leq&
  \int\limits_{\R^{12}}\!
2^{\f{4( j_1+j_2+j_3)}{\rho'} } \Big\{  \prod_{i=1}^4 \prod_{\ell=1}^3   \om_{\ga_{i\ell}} (2^{j_\ell}(x_\ell -y_{i\ell})) \Big\}
(\tau^{j_1,j_2,j_3}\widehat{\Psi^*}\widehat{\Omega} )\spcheck ({ D_{j_1,j_2,j_3} (x-y_1,x-y_2,x-y_3,x-y_4)})   \\
&\qquad  \f{2^{\f{ j_1+j_2+j_3}{\rho } } (\overline{\De}_{j_2}^{(2)} f_1 ) (y_1)}  
{    \prod_{\ell=1}^3   \om_{\ga_{1\ell}} (2^{j_\ell}(x_\ell -y_{1\ell}))   } 
 \quad  \f{ 2^{\f{ j_1+j_2+j_3}{\rho } }(\De_{j_3}^{(3)} f_2 )(y_2) }
{  \prod_{\ell=1}^3   \om_{\ga_{2\ell}} (2^{j_\ell}(x_\ell -y_{2\ell})) }  \\
&\qquad \f{2^{\f{ j_1+j_2+j_3}{\rho } } f_3(y_3) }
{  \prod_{\ell=1}^3   \om_{\ga_{3\ell}} (2^{j_\ell}(x_\ell -y_{3\ell})) }
\f{2^{\f{ j_1+j_2+j_3}{\rho } } (  \De_{j_2}^{(2)}\De_{j_1}^{(1)}f_4)(y_4)}
{  \prod_{\ell=1}^3   \om_{\ga_{4\ell}} (2^{j_\ell}(x_\ell -y_{4\ell})) } 
dy_1dy_2dy_3dy_4 .
\end{align*}
}
 We now apply H\"older's inequality with exponents $\rho$ and $\rho'$ to obtain the estimate  
\begin{align}\begin{split}\label{Es78}
&  \Big| T_{\tau} \big( \overline\De_{j_2}^{(2)} f_1,\De_{j_3}^{(3)} f_2,f_3,  \De_{j_2}^{(2)}\De_{j_1}^{(1)}f_4\big) (x_1,x_2,x_3) \Big|     \\
 &
\qquad \le C  A      \mathcal M(|\overline\De_{j_2}^{(2)} f_1|^\rho)^{\f{1}{\rho}}
\mathcal M(|\De_{j_3}^{(3)} f_2|^\rho)^{\f{1}{\rho}}\mathcal M(|f_3|^\rho)^{\f{1}{\rho}}
\mathcal M(|\De_{j_2}^{(2)}\De_{j_1}^{(1)}f_4|^\rho)^{\f{1}{\rho}}, 
\end{split} \end{align}
 where we used that   $\rho\ga_{i\ell}>1$ for all $i,\ell$ and also that 
\[
\Big\| \prod_{i=1}^4\prod_{\ell=1}^3 (I-\p_{\xi_{i\ell} }^2)^{\f{\ga_{i\ell}}{2}} (\tau^{j_1,j_2,j_3}\, \wh{\Psi^*}\wh{\Omega}) \Big\|_{L^{\rho } }  
\lesssim
\Big\| \prod_{i=1}^4\prod_{\ell=1}^3 (I-\p_{\xi_{i\ell} }^2)^{\f{\ga_{i\ell}}{2}} (\tau^{j_1,j_2,j_3} \, \wh{\Psi^*} ) \Big\|_{L^{\rho } }  
\]
\[
\lesssim \Big\| \prod_{i=1}^4\prod_{\ell=1}^3 (I-\p_{\xi_{i\ell} }^2)^{\f{\ga_{i\ell}}{2}} (\sigma \circ D_{j_1,j_2,j_3}) \, \wh{\Psi^*}   \Big\|_{L^{r } }  
\lesssim A
\]
which is a consequence  of Lemma~\ref{XL1} and of the fact that $\Psi^*$ is a finite sum of $\Psi_\ell$'s.

We now use \eqref{Es78} to estimate  our operator. We write 
$$
T_{\tau}(f_1,f_2,f_3,f_4)= 
\sum_{j_1} \sum_{j_2} \sum_{j_3} \widetilde\De_{j_1}^{(1)}\widetilde\De_{j_3}^{(3)}
{ T_{\tau}} \big(\overline\De_{j_2}^{(2)} f_1,\De_{j_3}^{(3)} f_2,f_3,  \De_{j_2}^{(2)}\De_{j_1}^{(1)}f_4\big)\, .
$$

Let $\mathcal M$ denote the strong maximal function. 
For each $j_1$ and $j_3$ we have the pointwise estimate 
\begin{align*}
&\big| \widetilde\De_{j_1}^{(1)}\widetilde\De_{j_3}^{(3)} \sum_{j_2} 
T_{\tau} \big(\overline\De_{j_2}^{(2)} f_1,\De_{j_3}^{(3)} f_2,f_3,  \De_{j_2}^{(2)}\De_{j_1}^{(1)}f_4\big) \big|
\\
&\qquad \le C  A   \sum_{j_2}  \mathcal M(|\overline\De_{j_2}^{(2)} f_1|^\rho)^{\f{1}{\rho}}
\mathcal M(|\De_{j_3}^{(3)} f_2|^\rho)^{\f{1}{\rho}}\mathcal M(|f_3|^\rho)^{\f{1}{\rho}}
\mathcal M(|\De_{j_2}^{(2)}\De_{j_1}^{(1)}f_4|^\rho)^{\f{1}{\rho}} \\
&\qquad \le C  A  \Big( \sum_{j_2}  
\mathcal M (|\overline\De_{j_2}^{(2)} f_1|^\rho)^{\f{2}{\rho}}\Big)^{\f12}
\mathcal M(|\De_{j_3}^{(3)} f_2|^\rho)^{\f{1}{\rho}} \mathcal M(|f_3|^\rho)^{\f{1}{\rho}}
\Big(\sum_{j_2}  \mathcal M (|\De_{j_2}^{(2)}\De_{j_1}^{(1)}f_4|^\rho)^{\f{2}{\rho}}\Big)^{\f12}
\end{align*}

We now apply Lemma~\ref{GLP} (hypothesis \eqref{lp0} is easy to check), { more precisely by Remark \ref{11071}}, to write
$$
\big\| T_{\tau}(f_1,f_2,f_3,f_4)\big\|_{L^p} \lesssim 
\Big\| \Big(\sum_{j_1} \sum_{j_3} \Big|\widetilde\De_{j_1}^{(1)}\widetilde\De_{j_3}^{(3)} 
\sum_{j_2}   { T_{\tau}}\big(\overline\De_{j_2}^{(2)} f_1,\De_{j_3}^{(3)} f_2,f_3,  \De_{j_2}^{(2)}\De_{j_1}^{(1)}f_4\big)  \Big|^2 \Big)^{\f12} \Big\|_{L^p} 
$$
and using the preceding estimate we control this expression by 
$$
A \bigg\| 
\Big( \sum_{j_2}  
\mathcal M(|\overline\De_{j_2}^{(2)} f_1|^\rho)^{\f{2}{\rho}}\Big)^{\f12}
\Big(\sum_{j_3}  \mathcal M(|\De_{j_3}^{(3)} f_2|^\rho)^{\f{2}{\rho}} \Big)^{\f12}
\mathcal M(|f_3|^\rho)^{\f{1}{\rho}}
\Big(\sum_{j_2}\sum_{j_1}\mathcal M(|\De_{j_2}^{(2)}\De_{j_1}^{(1)}f_4|^\rho)^{\f{2}{\rho}}\Big)^{\f12}
\bigg\|_{L^p}.
$$
The required conclusion follows by applying H\"older's inequality, the Fefferman-Stein inequality
\cite{FS}, { and  Lemma~\ref{GLP}} using the facts that $1\le\rho<2$ and $\rho<p_i$ for all $i$.

\bigskip

We show now how to modify { the above} proof to obtain the general case. To do so, we introduce 
some notation. We consider the set $\{1,2,\dots , n\}$ that indexes the columns of the $m\times n$ matrix
$(\xi_{kl})_{ \{1\le k \le m, 1\le l \le n\} }$. We split the set $\{1,2,\dots , n\}$ into two pieces I and II, by 
placing $l\in I$ if the $l$th column follows in the first case (where there the largest variable dominates all the 
other ones) and placing 
$l\in II$ if the $l$th column follows in the second case (where there the largest variable and the second largest are comparable). To make the notation a bit simpler, without  loss of generality we suppose that 
$I=\{1, \dots , q\}$ and $II=\{q+1,\dots , n\}$ for some $q$. Notice that one of these sets could be empty.

Recall the notation for the Littlewood-Paley operators $\De_j^{(l)}$ as in the case $m=4$, $n=3$. For the purposes 
of this theorem we introduce a slightly more refined notation using two upper indices in $\De_j^{(k,l)}$. The first 
index shows the function $f_k$ on which $\De_j^{(k,l)}$ acts and the second one the coordinate $\xi_{kl}$
of the variable $\xi_k$ on which $\De_j^{(k,l)}$ acts.

Define a map 
$$
u:  \{1,2,\dots , n\}\to  \{1,2,\dots , m\}
$$
 such that for each $l$, $u(l)$ denotes the index such that 
$\xi_{u(l)l}$ is largest among $ \xi_{kl}$. Also define a map 
$$
\bar u : \{q+1,\dots , n\}\to \{1,2,\dots , m\}
$$
such that 
$\xi_{\bar u(l)l}$ is second largest among $ \xi_{kl}$. We always have $\bar u(l) \neq u(l)$ for all $l$ in 
 $\{q+1,\dots , n\}$. We also  define 
$$
{ \De}_{j  }^{(u(r),r )}  \vec f = { \De}_{j  }^{(u(r),r )}   (f_1,\dots , f_m) =
   (f_1,\dots , { \De}_{j }^{(r)} f_{u(r)} ,\dots , f_m) 
$$
and we extend this definition to the case where ${ \De}_{j_{i_1} }^{(u(i_1),i_1 )} \cdots { \De}_{j_{i_r} }^{(u(i_r),i_r )}$
 acts on $(f_1,\dots , f_m)$.  Additionally, we use the definitions of $\widetilde \De_j$ and $\overline \De_j$ 
 as introduced in the special case $m=4$, $n=3$.

Let $\tau$ be the multilinear multiplier associated with a given fixed mapping $u$. 
We write
\begin{eqnarray*}
& & T_\tau(f_1,\dots , f_m) \\
& =  &  
 \sum_{ j_1,\dots , j_n\in \mathbb Z } T_\tau \big[{ \De}_{j_1 }^{(u(1),1  )}\cdots { \De}_{j_n }^{(u(n),n  )}  (f_1,\dots , f_m)  \big]\\
& =  & \sum_{\substack{ j_1,\dots , j_q \in \mathbb Z  }} 
  \widetilde{ \De}_{j_{1}}^{(u(1),1)} \cdots   \widetilde{ \De}_{j_{q}}^{(u(q),q)}
\sum_{\substack{ j_{q+1},\dots , j_n\in \mathbb Z}}    T_\tau  \Big[ 
\prod_{\kappa=1}^q  { \De}_{j_\kappa}^{(u(\kappa),\kappa)} 
 \prod_{\lambda=q+1}^n  { \De}_{j_{\lambda}}^{(u(\lambda),\lambda )} 
 \overline{ \De}_{j_{ \lambda}}^{ ( \overline u(\lambda),\lambda) }  
\vec f \, \Big] \, .
\end{eqnarray*}

The estimates in the case $m=4$ and $n=3$ show that 
the term in the interior sum satisfies  
\begin{align*}
& \Big|   \widetilde{ \De}_{j_{1}}^{(u(1),1)} \cdots   \widetilde{ \De}_{j_{q}}^{(u(q),q)}
\sum_{ j_{q+1},\dots , j_n\in \mathbb Z}    T_\tau  \Big[ 
\prod_{\kappa=1}^q  { \De}_{j_\kappa}^{(u(\kappa),\kappa)} 
 \prod_{\lambda=q+1}^n  { \De}_{j_{\lambda}}^{(u(\lambda),\lambda )} 
 \overline{ \De}_{j_{ \lambda}}^{ ( \overline u(\lambda),\lambda) }  
(f_1,\dots , f_m)  \Big] \Big|   \\
 & \qquad
\lesssim A \sum_{\substack{  j_{q+1},\dots , j_n\in \mathbb Z}} 
\prod_{i=1 }^m \mathcal M \bigg( \bigg| 
\prod_{\substack{ 1\le \kappa \le q \\ \kappa \in u^{-1}[i]} }  { \De}_{j_\kappa}^{(i,\kappa)} 
 \prod_{\substack{q+1 \le \lambda \le n \\ \lambda\in u^{-1}[i]  }  } { \De}_{j_{\lambda}}^{(i,\lambda )} 
  \prod_{\substack{q+1 \le \mu \le n \\ \mu\in \overline u^{-1}[i]  }}   
 \overline{ \De}_{j_{ \mu}}^{ ( i,\mu) }  
f_i  \bigg|^\rho\bigg)^{\f1 \rho} ,
\end{align*}
where $u^{-1}[i]=\{k \in \{1,\dots , n\}:\,\, u(k)=i\}$ and  
with the understanding that if any of the index sets is empty, then the corresponding Littlewood-Paley 
operators do not appear.  
Applying the Cauchy-Schwarz inequality $m-q$ times successively for the indices $j_{q+1}, j_{q+1}, \dots , j_m$
we estimate the last displayed expression by 
\begin{align}
\label{eq.5A10}
A\prod_{i=1 }^m \bigg[ 
\sum_{\substack{ j_\lambda\in \mathbb Z\\  \lambda \in u^{-1}[i] \\ q+1\le \lambda \le n }} 
\sum_{\substack{ j_\mu \in\mathbb Z\\ \mu \in \overline u^{-1}[i] \\ q+1\le \mu \le n }} 
 \mathcal M \bigg( \bigg| 
\prod_{\substack{ 1\le \kappa \le q \\ \kappa \in u^{-1}[i]} }  { \De}_{j_\kappa}^{(i,\kappa)} 
 \prod_{\substack{q+1 \le \lambda \le n \\ \lambda\in u^{-1}[i]  }  } { \De}_{j_{\lambda}}^{(i,\lambda )} 
  \prod_{\substack{q+1 \le \mu \le n \\ \mu\in \overline u^{-1}[i]  }}   
 \overline{ \De}_{j_{ \mu}}^{ ( i,\mu) }  
f_i  \bigg|^\rho\bigg)^{\!\f2 \rho} \bigg]^{\f12}. 
\end{align}

When $I\ne\emptyset$, we use Lemma~\ref{GLP} and \eqref{eq.5A10} to obtain
\begin{align*}
&   \big\|T_\tau(f_1,\dots , f_m)  \big\|_{L^p}  \\  
& =  \bigg\| 
\sum_{\substack{ j_1,\dots , j_q \in \mathbb Z  }} 
  \widetilde{ \De}_{j_{1}}^{(u(1),1)} \cdots   \widetilde{ \De}_{j_{q}}^{(u(q),q)}
\sum_{\substack{ j_{q+1},\dots , j_n\in \mathbb Z}}    T_\tau  \Big[ 
\prod_{\kappa=1}^q  { \De}_{j_\kappa}^{(u(\kappa),\kappa)} 
 \prod_{\lambda=q+1}^n  { \De}_{j_{\lambda}}^{(u(\lambda),\lambda )} 
 \overline{ \De}_{j_{ \lambda}}^{ ( \overline u(\lambda),\lambda) }  
\vec f\,  \Big]
\bigg\|_{L^p} \\
& \lesssim   \Bigg\|\bigg[ \sum_{\substack{ j_1,\dots , j_q \in \mathbb Z  }} \Big|
\widetilde{ \De}_{j_{1}}^{(u(1),1)} \cdots   \widetilde{ \De}_{j_{q}}^{(u(q),q)}
\!\!\!\!\! \sum_{\substack{ j_{q+1},\dots , j_n\in \mathbb Z}}    T_\tau  \Big[ 
\prod_{\kappa=1}^q  { \De}_{j_\kappa}^{(u(\kappa),\kappa)} 
 \prod_{\lambda=q+1}^n  { \De}_{j_{\lambda}}^{(u(\lambda),\lambda )} 
 \overline{ \De}_{j_{ \lambda}}^{ ( \overline u(\lambda),\lambda) }
\vec f\,   \Big]  \Big|^2 \bigg]^{\f12} \Bigg\|_{L^p} \\ 
& \lesssim   A\Bigg\| \bigg[\sum_{\substack{ j_1,\dots , j_q \in \mathbb Z  }} \prod_{i=1 }^m   \bigg\{  \!\!\!\!
\sum_{\substack{ j_\lambda\in \mathbb Z\\  \lambda \in u^{-1}[i]  \\ q+1\le \lambda \le n }} 
\sum_{\substack{ j_\mu \in\mathbb Z\\ \mu \in \overline u^{-1}[i] \\ q+1\le \mu \le n }} 
\!\!\!\! \mathcal M \bigg( \bigg|  \!\!\!
\prod_{\substack{ 1\le \kappa \le q \\ \kappa \in u^{-1}[i]} }  { \De}_{j_\kappa}^{(i,\kappa)} 
\!\!\! \prod_{\substack{q+1 \le \lambda \le n \\ \lambda\in u^{-1}[i]  }  } { \De}_{j_{\lambda}}^{(i,\lambda )} 
\!\!\!  \prod_{\substack{q+1 \le \mu \le n \\ \mu\in \overline u^{-1}[i]  }}   
 \overline{ \De}_{j_{ \mu}}^{ ( i,\mu) }  
f_i  \bigg|^\rho\bigg)^{\! \f2 \rho} \bigg\}
 \bigg]^{\f12} \Bigg\|_{L^p} \\
 & \lesssim   A\Bigg\| \bigg(\prod_{i=1 }^m  
 \sum_{\substack{ j_\kappa \in \mathbb Z\\   \kappa\in u^{-1}[i] \\ 1\le \kappa \le q}}  
\sum_{\substack{ j_\lambda\in \mathbb Z\\  \lambda \in u^{-1}[i]  \\ q+1\le \lambda \le n }} 
\sum_{\substack{ j_\mu \in\mathbb Z\\ \mu \in \overline u^{-1}[i] \\ q+1\le \mu \le n }} 
\!\!\! \mathcal M \bigg( \bigg| 
\prod_{\substack{ 1\le \kappa \le q \\ \kappa \in u^{-1}[i]} }  { \De}_{j_\kappa}^{(i,\kappa)} 
\!\!\! \prod_{\substack{q+1 \le \lambda \le n \\ \lambda\in u^{-1}[i]  }  } { \De}_{j_{\lambda}}^{(i,\lambda )} 
\!\!\!  \prod_{\substack{q+1 \le \mu \le n \\ \mu\in \overline u^{-1}[i]  }}   
 \overline{ \De}_{j_{ \mu}}^{ ( i,\mu) }  
f_i  \bigg|^\rho\bigg)^{\! \f2 \rho}
 \bigg)^{\f12} \Bigg\|_{L^p} .
\end{align*} 
Otherwise, when $I=\emptyset$, from \eqref{eq.5A10} we can see that $T_\tau(f_1,\dots , f_m)$ is controlled by
\begin{align}
\label{eq.5A11}
A\prod_{i=1 }^m \bigg[ 
\sum_{\substack{ j_\lambda\in \mathbb Z\\  \lambda \in u^{-1}[i] \\ 1\le \lambda \le n }} 
\sum_{\substack{ j_\mu \in\mathbb Z\\ \mu \in \overline u^{-1}[i] \\ 1\le \mu \le n }} 
 \mathcal M \bigg( \bigg| 
 \prod_{\substack{1 \le \lambda \le n \\ \lambda\in u^{-1}[i]  }  }
  { \De}_{j_{\lambda}}^{(i,\lambda )} 
  \prod_{\substack{1 \le \mu \le n \\ \mu\in \overline u^{-1}[i]  }}   
 \overline{ \De}_{j_{ \mu}}^{ ( i,\mu) }  
f_i  \bigg|^\rho\bigg)^{\!\f2 \rho} \bigg]^{\f12}. 
\end{align}
At this point we apply   H\"older's inequality and the Fefferman-Stein inequality
\cite{FS} using the facts that $1<\rho<2$ and $\rho<p_i$ for all $i$. Then we 
{ 
control
$\big\|T_\tau(f_1,\dots , f_m)  \big\|_{L^p}$
}
by a constant multiple of 
$$
A \prod_{i=1}^m 
\Bigg\| \bigg(   \sum_{\substack{j_\kappa \in \mathbb Z\\   \kappa\in u^{-1}[i] \\ 1\le \kappa \le q}}  
\sum_{\substack{ j_\lambda\in \mathbb Z\\  \lambda \in u^{-1}[i]  \\ q+1\le \lambda \le n }} 
\sum_{\substack{ j_\mu \in\mathbb Z\\ \mu \in \overline u^{-1}[i] \\ q+1\le \mu \le n }} 
  \bigg| 
\prod_{\substack{ 1\le \kappa \le q \\ \kappa \in u^{-1}[i]} }  { \De}_{j_\kappa}^{(i,\kappa)} 
 \prod_{\substack{q+1 \le \lambda \le n \\ \lambda\in u^{-1}[i]  }  } { \De}_{j_{\lambda}}^{(i,\lambda )} 
  \prod_{\substack{q+1 \le \mu \le n \\ \mu\in \overline u^{-1}[i]  }}   
 \overline{ \De}_{j_{ \mu}}^{ ( i,\mu) }  
f_i  \bigg|^2 
 \bigg)^{\f12} \Bigg\|_{L^{p_i}} 
$$
{ 
or
$$
A \prod_{i=1}^m 
\Bigg\| \bigg(   
\sum_{\substack{ j_\lambda\in \mathbb Z\\  \lambda \in u^{-1}[i]  \\ 1\le \lambda \le n }} 
\sum_{\substack{ j_\mu \in\mathbb Z\\ \mu \in \overline u^{-1}[i] \\ 1\le \mu \le n }} 
  \bigg| 
 \prod_{\substack{1 \le \lambda \le n \\ \lambda\in u^{-1}[i]  }  } { \De}_{j_{\lambda}}^{(i,\lambda )} 
  \prod_{\substack{1 \le \mu \le n \\ \mu\in \overline u^{-1}[i]  }}   
 \overline{ \De}_{j_{ \mu}}^{ ( i,\mu) }  
f_i  \bigg|^2 
 \bigg)^{\f12} \Bigg\|_{L^{p_i}} 
$$
}
and by the Littlewood-Paley theorem the last expression is bounded by $A$ times the product of the $L^{p_i}$ norms
of the $f_i$. 

{ 
\begin{remark}
We see from the proof that we do not use the property
that $\xi_{kl}\in\R$, so the same argument generalizes our result
to the case when each $f_k$ is defined on $\R^{d}$
with $\xi_{kl}\in\R^d$.
This covers 
\cite[Theorem 1.10]{CL}, as we claimed in the introduction.

\end{remark}
}

\medskip

 \section{Applications: Calder\'on-Coifman-Journ\'e commutators}
 \setcounter{equation}{0}
 
 \subsection{Calder\'on commutator} 
 In 1965 Calder\'on \cite{AC} introduced the (first-order) commutator
\begin{eqnarray}\label{e6.1}
{ \mathcal C}_1 (f;a)(x)= \textup{p.v.}
\int_{\mathbb R} \frac{A(x)-A(y)}{(x-y)^2}  f(y )  dy,
\end{eqnarray}
where $a$ is the derivative of a
 Lipschitz  function $A$ and  $f$ is a test function on the real line. 
 It is known that ${ \mathcal C}_1$ is a bounded operator in $L^p(\Bbb R), 1<p<\infty$, if $A$ is a Lipchitz function
 on $\Bbb R$ and
 $$
 \|{ \mathcal C}_1 (f;a)\|_{L^p({\Bbb R})}\leq C_p \|a\|_{L^\infty({\Bbb R})}\|f\|_{L^p({\Bbb R})}, \ \ 
 1<p<\infty.
 $$
 See  Calder\'on \cite{AC, {C78}} and Coifman-Meyer \cite{CM1} for its history.  

Viewed as a bilinear operator acting on the pair $(f,a)$, then 
the operator ${\mathcal C}_1$ can be written as a bilinear multiplier operator
\begin{eqnarray}\label{e6.2}
\qquad  { \mathcal C}_1(f;a)(x)=
-i\pi\int_{\mathbb R}\int_{\mathbb R} \wh{f}(\xi)\, \wh{a}(\eta)\, 
\left( \sgn (\eta) \Phi\big(  \xi/\eta\big)\right) \, e^{2\pi i x(\xi+\eta)}  \,
d\xi d\eta\, ,
\end{eqnarray}
where $\Phi$ is the following Lipschitz function on the real line:
\begin{eqnarray}\label{e6.2}
\Phi (s)    = \left\{
\begin{array}{lll}
-1, & s\le -1;\\ [6pt]
1+2s, &-1<s \le 0; \\ [6pt]
1, &s>0.
\end{array}
\right.
\end{eqnarray}

The operator ${\mathcal C}_1$ is too singular to fall under the   scope of
 multilinear Calder\'on-Zygmund theory \cite{GT2}. However   it was shown to be  bounded
 from $L^{p_1}(\RR)    \times L^{p_2}(\RR)$ to $L^p(\RR)$ when
$1<p_1, p_2<\nf$ and $({1/p_1} + {1/p_2})^{-1} =p>1/2$;
see C. Calder\'on \cite{CC}.  See also  Coifman-Meyer \cite{CM1} and
Duong-Grafakos-Yan \cite{DGY}. The boundedness of   ${ \mathcal C}_1$ on $L^p$ for $p\ge 1$
 was also studied by Muscalu \cite{Mu1}   via time-frequency analysis.

In this work we  will apply Theorem~\ref{1dil}  to
obtain a direct proof of the boundedness of ${ \mathcal C}_1 $
from $L^{p_1}(\mathbb R) \times L^{p_2}(\mathbb R)$ to
  $L^p(\mathbb R)$ in the full range of $p>1/2$.  Our proof is
 based on exploiting the (limited) smoothness of the function $\Phi$, measured in terms of a  Sobolev  space  norm.
 { A partial result using a similar idea in this direction with the restriction $p>2/3$ has been obtained by
 \cite{MiTo14}.}
 
 {  For $r\ge 1$ and $\ga>0$, we recall the Sobolev space $L^r_\ga(\mathbb R^n)$, $\ga>0$  of all functions $g$ with 
 $\|(I-\De)^{\ga/2} g\|_{L^p}<\nf $.}
 {  For $\vec\ga=(\ga_1,\dots,\ga_n)$,
we denote by $L^r_{\vec\ga}(\bbr^n)$
the class of distributions $f$ such that
$$
\bigg\|{ \prod_{ 1\le \ell \le n  } (I-\p_{\ell}^2)^{\f{\ga_{\ell}}{2}}   f }\bigg\|_{L^r (\mathbb{R}^{n})}<\infty.
$$
It is easy to verify using  {  multiplier theorems}
that $L^r_{\ga}(\bbr^n)\subset L^r_{\vec\ga}(\bbr^n)$,
where $\ga=|\vec\ga|=\ga_1+\cdots+\ga_n$.
The  spaces $L^r_{\vec\ga}(\bbr^n)$ are sometimes referred to as 
Sobolev spaces with dominating mixed smoothness in the literature,
see \cite{STr} for more details and references.}
 
 To begin, 
 we  need the following characterizations of   Sobolev norms, given by Stein \cite{St2},
 \cite[Lemma 3, p. 136]{St1}.
\begin{lemma}[Stein]\label{le6.2}
(i)
Let $0< \alpha<1$ and ${2n/(n+ 2\alpha)}<p<\infty$. Then $f\in L_{\alpha}^p(\R^n)$ if and only if
$\|f\|_{L_{\alpha}^p(\R^n)} \simeq \|f\|_{L^p(\R^n)} +\|I_{\alpha}(f)\|_{L^p(\R^n)}$ where
$$
I_{\alpha}(f)(x)=\left(\int_{\R^n} {|f(x)-f(y)|^2\over |x-y|^{n+2\alpha} }dy\right)^{1/2}.
$$

(ii)
Let $1\leq \alpha<\infty$ and $1<p<\infty$. Then $f\in L_{\alpha}^p(\R^n)$ if and only if
$f\in L_{\alpha-1}^p(\R^n)$ and for $1\le j\le n$,
 ${\partial f \over \partial x_j} \in L_{\alpha-1}^p(\R^n).$
Furthermore, we have
$$\|f\|_{L_{\alpha}^p(\R^n)} \simeq \|f\|_{L^p_{\alpha-1}(\R^n)} +
\sum_{j=1}^n\left\|{\partial f \over \partial x_j}\right\|_{L^p_{\alpha-1}(\R^n)}.
$$
\end{lemma}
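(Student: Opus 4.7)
The plan is to prove part (i) by first reducing it to the $L^p$-equivalence
$$\|(-\De)^{\al/2}f\|_{L^p} \approx \|I_{\al}(f)\|_{L^p},$$
since the Bessel norm $\|f\|_{L^p_{\al}} = \|(I-\De)^{\al/2}f\|_{L^p}$ is comparable to $\|f\|_{L^p}+\|(-\De)^{\al/2}f\|_{L^p}$ by a Mikhlin-type argument applied to the ratio $(1+|\xi|^2)^{\al/2}/(1+|\xi|^{\al})$ and its reciprocal (both of which are smooth multipliers of Laplace-Beltrami type). For the reduced equivalence I would exploit the singular-integral representation
$$(-\De)^{\al/2}f(x) = c_{n,\al}\,\textup{p.v.}\int_{\R^n}\frac{f(x)-f(y)}{|x-y|^{n+\al}}\,dy$$
together with a Littlewood--Paley $g$-function attached to the Poisson semigroup $P_t$, namely
$$g_{\al}(f)(x) = \bigg(\int_0^{\infty}|t^{1-\al}\p_t P_tf(x)|^2\,\frac{dt}{t}\bigg)^{1/2},$$
whose $L^p$-norm is equivalent to $\|(-\De)^{\al/2}f\|_{L^p}$ by standard square-function theory. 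A pointwise comparison between $g_{\al}(f)$ and $I_{\al}(f)$, carried out through Fubini and the explicit Poisson kernel estimates, then closes the equivalence for the $L^p$-norms.

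For part (ii) the plan is cleaner: factor $(I-\De)^{\al/2} = (I-\De)^{(\al-1)/2}\,(I-\De)^{1/2}$ and establish the intermediate identity
$$\|(I-\De)^{1/2}g\|_{L^p} \approx \|g\|_{L^p}+\sum_{j=1}^n\|\p_jg\|_{L^p}\qquad (1<p<\infty).$$
This identity follows by applying the Mikhlin multiplier theorem to the symbols $(1+|\xi|^2)^{-1/2}$, $\xi_j(1+|\xi|^2)^{-1/2}$, and their reciprocals on the appropriate subspaces. Setting $g=(I-\De)^{(\al-1)/2}f$ and commuting Bessel potentials with partial derivatives (which is trivial on the Fourier side) then yields the stated equivalence between $\|f\|_{L^p_{\al}}$ and $\|f\|_{L^p_{\al-1}}+\sum_j\|\p_jf\|_{L^p_{\al-1}}$.

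The main obstacle lies in the sub-$L^2$ range of part (i): when $p<2$, and in particular when $p\le 1$ (possible for $n=1$ and $\al>1/2$), a naive Cauchy--Schwarz estimate controlling $(-\De)^{\al/2}f$ pointwise by $I_{\al}(f)$ fails because the dual integral $\int|x-y|^{-n}dy$ diverges. To handle this range one has to view the map $f\mapsto\bigl((f(\cdot)-f(y))|\cdot-y|^{-n/2-\al}\bigr)_{y\in\R^n}$ as a vector-valued Calder\'on--Zygmund operator with values in $L^2(\R^n,dy)$ and invoke its Hardy-space extension; this replaces the pointwise step and is where part (i) requires genuine harmonic-analysis input beyond the multiplier argument that sufficed for part (ii).
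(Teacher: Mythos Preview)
The paper does not prove this lemma at all; it is quoted as a known result of Stein with citations to \cite{St2} and \cite[Lemma 3, p.~136]{St1}, and the paper proceeds to use it as a black box in the proofs of Lemmas~\ref{var}--\ref{lem.PhiR1}. Your proposal therefore supplies something the paper deliberately outsources.

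That said, your outline is sound and in fact close to Stein's own arguments in the cited references. Part~(ii) via the Mikhlin multiplier theorem applied to $\xi_j(1+|\xi|^2)^{-1/2}$ and its companions is exactly the standard route. For part~(i), Stein's proof in \cite{St1} likewise passes through a square function of Littlewood--Paley type (built from the Poisson semigroup) and compares it pointwise with $I_\alpha(f)$; the extension below $p=2$ is indeed handled by recognising $f\mapsto I_\alpha(f)$ as an $L^2(\R^n)$-valued Calder\'on--Zygmund operator, just as you describe. So your plan matches the original source the paper defers to, and there is no gap.
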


Throughout this section fix   a nondecreasing  smooth function $h$ on $\R$ such that
\begin{equation}\label{eq.func.h}
 h(t)=\begin{cases}
   3, & \mbox{if } t\in [4, +\infty);  \\[2pt]
   \mbox{smooth}, & \mbox{if } t\in [2, 4); \\[2pt]
   t, & \mbox{if } t\in [1/8, 2); \\[2pt]
   \mbox{smooth}, & \mbox{if } t\in[1/32, 1/8); \\[2pt]
   1/16, & \mbox{otherwise}.
 \end{cases}
\end{equation}

  \begin{lemma}\label{var}
 Let $u$ be a function supported in the rectangle
\begin{equation}\label{SETU}
\{(y_1,y_2):\,\, |y_1|\le 101/100, 1/4\le y_2\le 7/4\}
\end{equation}
 in $\mathbb R^2$ such that $\nabla u\in L^{\nf}(\R^2)$,
 and $u(x)\in L^r_{\ga}(\R^2)$ with $1<\gamma<2$, $2/\ga<r<1/(\ga-1)$.
 Define $U(y_1,y_2) = u(y_1/h(y_2),y_2).$
 Then $U\in L^r_{\ga}(\R^2)$
 and
 $$
 \|U\|_{L^r_{\ga}(\R^2)}\le C\big(\|\nabla u\|_{L^{\nf}(\R^2)}+\|u\|_{L^r_{\ga}(\R^2)}\big).
 $$
 \end{lemma}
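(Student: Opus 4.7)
The plan is to reduce matters to Stein's characterizations in Lemma~\ref{le6.2}, exploiting that $U = u \circ \Phi$ where $\Phi(y_1,y_2) = (y_1/h(y_2), y_2)$ is a global $C^\infty$ diffeomorphism of $\R^2$ whose Jacobian determinant $1/h(y_2)$ is bounded above and below, and whose derivatives of all orders are bounded (since $h$ is smooth, satisfies $1/16 \le h \le 3$, and has $h'$ compactly supported). In particular $\Phi$ is bi-Lipschitz: $c|y-z| \le |\Phi(y)-\Phi(z)| \le C|y-z|$ for all $y,z \in \R^2$. Since $1<\gamma<2$, part (ii) of Lemma~\ref{le6.2} gives
$$
\|U\|_{L^r_\gamma} \simeq \|U\|_{L^r_{\gamma-1}} + \sum_{j=1}^2 \|\partial_j U\|_{L^r_{\gamma-1}},
$$
and since $\gamma-1 \in (0,1)$ with $r > 2/\gamma = 2n/(n+2(\gamma-1))$ for $n=2$, part (i) further writes each piece as an $L^r$ norm plus an $\|I_{\gamma-1}(\cdot)\|_{L^r}$ seminorm. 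The $L^r$ pieces are trivial: change variables $\xi = \Phi(y)$ to obtain $\|U\|_{L^r} \lesssim \|u\|_{L^r}$, and direct differentiation combined with the chain rule yields $\|\partial_j U\|_{L^r} \lesssim \|\nabla u\|_{L^\infty}$ on the (bounded) support of $U$.

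For $\|I_{\gamma-1}(U)\|_{L^r}$, I would establish the general composition bound
$$
I_{\gamma-1}(v \circ \Phi)(y) \lesssim \bigl(I_{\gamma-1}(v)\bigr)(\Phi(y))
$$
for $v$ compactly supported, by substituting $\zeta = \Phi(z)$ in the defining integral, using the bi-Lipschitz estimate to replace $|y-z|^{-2-2(\gamma-1)}$ with a constant multiple of $|\Phi(y)-\Phi(z)|^{-2-2(\gamma-1)} = |\Phi(y)-\zeta|^{-2-2(\gamma-1)}$, and absorbing the bounded Jacobian. Taking $L^r$ norms and applying a second change of variables in $y$ yields $\|I_{\gamma-1}(U)\|_{L^r} \lesssim \|I_{\gamma-1}(u)\|_{L^r} \lesssim \|u\|_{L^r_{\gamma-1}} \lesssim \|u\|_{L^r_\gamma}$.

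The harder step is $\|I_{\gamma-1}(\partial_j U)\|_{L^r}$. By the chain rule, $\partial_j U$ is a finite sum of terms of the form $A_{jk}(y)\,(\partial_k u)\circ\Phi$, where each $A_{jk}$ is a smooth, bounded function (in fact $A_{11} = 1/h(y_2)$, etc.) whose restriction to the support of $U$, after multiplication by a fixed smooth cutoff $\chi$ adapted to the rectangle, lies in $L^r_{\gamma-1}$. I would then invoke the Kato-Ponce-type Leibniz inequality
$$
\|fg\|_{L^r_{\gamma-1}} \lesssim \|f\|_{L^r_{\gamma-1}}\,\|g\|_{L^\infty} + \|f\|_{L^\infty}\,\|g\|_{L^r_{\gamma-1}}
$$
(applicable by the restrictions $2/\gamma < r < 1/(\gamma-1)$ on the exponents) with $f = \chi A_{jk}$ and $g = (\partial_k u)\circ\Phi$. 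The first term contributes $C\|\nabla u\|_{L^\infty}$ via the pointwise bound $\|(\partial_k u)\circ\Phi\|_{L^\infty} \le \|\nabla u\|_{L^\infty}$, and the second term contributes $C\|(\partial_k u)\circ\Phi\|_{L^r_{\gamma-1}} \lesssim \|\partial_k u\|_{L^r_{\gamma-1}} \lesssim \|u\|_{L^r_\gamma}$ by the composition bound from the previous paragraph.

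The main obstacle is justifying the composition bound for $I_{\gamma-1}$ uniformly, including the contribution from the region where $z$ lies outside the support of $v$ (so that $v(z)=0$ but $v(y)\neq 0$); here one must verify the decay of $|y-z|^{-2-2(\gamma-1)}$ against the boundary behavior of $v$, which is controlled by the Lipschitz hypothesis $\nabla u \in L^\infty$ and by the exponent constraint $r < 1/(\gamma-1)$. Once that pointwise composition estimate is in place, the rest of the proof assembles from Stein's characterization and a single application of the Kato-Ponce product rule.
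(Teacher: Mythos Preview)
Your approach is essentially the same as the paper's: both reduce to Stein's characterization (Lemma~\ref{le6.2}), exploit the bi-Lipschitz change of variables $\Phi$, and handle $\partial_j U$ via a Leibniz-type splitting. The only packaging difference is that the paper carries out the product estimate by hand at the level of the $I_\alpha$ integral---writing $\partial_1 u(x)\,\tilde h(x) - \partial_1 u(x')\,\tilde h(x') = [\partial_1 u(x)-\partial_1 u(x')]\tilde h(x') + \partial_1 u(x)[\tilde h(x)-\tilde h(x')]$ directly---whereas you invoke Kato--Ponce as a black box and then apply the composition bound; these are interchangeable.

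One clarification: your ``main obstacle'' at the end is not an obstacle at all. The pointwise composition bound $I_{\gamma-1}(v\circ\Phi)(y)\lesssim I_{\gamma-1}(v)(\Phi(y))$ follows immediately from the global bi-Lipschitz change of variables $\zeta=\Phi(z)$ (bounded Jacobian, $|y-z|\approx|\Phi(y)-\zeta|$) and requires neither the support hypothesis, nor $\nabla u\in L^\infty$, nor the constraint $r<1/(\gamma-1)$. The condition $r\gamma>2$ is used elsewhere (to make Stein's criterion applicable and, in the paper's version, to localize the outer integral to a bounded region), while $r<1/(\gamma-1)$ plays no role in this lemma---it enters later in Lemma~\ref{lem.PhiR1}.
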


 \begin{proof}
 Because of Lemma  \ref{le6.2}, it suffices to show
for $\al=\ga-1$  and $2/(1+\alpha) <r<{1/\alpha}$ that $U\in L^r_1(\RR^2)$, $I_{\al}(U)\in L^r(\RR^2)$
and
$I_{\al}(\p_jU)\in L^r(\RR^2)$ with $j=1,2$.
The first assertion  follows trivially by checking
the derivatives directly while the second one is verified in a way similar
 to the third one, where we adapt an argument found in
Triebel \cite[Section 4.3]{Tri} with a suitable change of variables.

Next, we     show that $I_{\al}(\p_1U)\in L^r(\RR^2)$.
We will estimate the following expression
  \begin{eqnarray*}\label{eeeee}
\|I_{\al}(\p_1U)\|_{L^r(\RR^2)}^r=\int_{\R^2} \left(\int_{\R^2}
{|\partial_1U(y)-\partial_1U(y')|^2\over |y-y'|^{2+2\alpha} }\,dy\right)^{r/2} dy'.
 \end{eqnarray*}
 Denote by $B$ a finite ball centered at $0$ containing the support of $\p_1U$. Then
 it is easy to check that, since $\p_1U\in L^{\nf}$, $r(1+\al)=r\ga>2$,
 $$
\|I_{\al}(\p_1U)\|_{L^r(\RR^2)}^r\le C\left(\|\nabla u\|_{L^{\nf}}^r+\int_{3B}
\left(\int_{3B} {|\partial_1U(y)-\partial_1U(y')|^2\over |y-y'|^{2+2\alpha} }\,dy\right)^{r/2} dy'\right),
 $$
 where $C$ is a constant depending on $B$.

Denote $x=(x_1, x_2)$, $y=(y_1, y_2)$.
One writes   $y=\vp(x)$ and $x=\psi(y)$ in the form
\begin{eqnarray*}
  \left\{
 \begin{array}{ll}
 y_1&=\vp_1(x_1, x_2)=x_1 h(x_2),\\[2pt]
  y_2&=\vp_2(x_1, x_2)=x_2
   \end{array}
 \right.
\end{eqnarray*}
and
\begin{eqnarray*}
  \left\{
 \begin{array}{ll}
 x_1&=\psi_1(y_1, y_2)={ y_1 /h(y_2)},\\[2pt]
  x_2&=\psi_2(y_1, y_2)=y_2,
     \end{array}
 \right.
\end{eqnarray*}
where $ h $ is a function defined in \eqref{eq.func.h}.
By the change of variables $y=\varphi(x)$ with $|{\rm det} \vp'(x)|<C<\infty$, direct computations give
\begin{align*}
 {\partial_1}U(y)=& {\partial \over \partial y_1}u(\psi(y))  \cdot {1\over  h(y_2)}=:
     \partial_1{  u}(\psi(y)) \cdot {1\over  h(y_2)},
\\
 {\partial_2}U(y)=&- {\partial \over  \partial y_1}
  {  u}(\psi(y)) \cdot {y_1h'(y_2)\over h(y_2)} +{\partial \over  \partial y_2} {  u}(\psi(y))
 =:- {\partial_1} {  u}(\psi(y)) \cdot {y_1h'(y_2)\over h(y_2)} +{\partial_2} {  u}(\psi(y)),
\end{align*}
and the fact that
$|\psi(y)-\psi(y')|\le \max\{\|\nabla\psi_1\|_{\nf},\|\nabla\psi_2\|_{\nf}\}|y-y'|,$
we have
\begin{eqnarray}
& & \hspace{-.7in}	\|I_{\al}(\p_1U)\|_{L^r(\RR^2)}^r \nonumber \\
 &\leq &C\|\nabla u\|_{L^{\nf}(\R^2)}^r+C\int_{\R^2} \left(\int_{\R^2}
 {|\partial_1U(y)-\partial_1U(y')|^2\over |\psi(y)-\psi(y')|^{2+2\alpha} }
 dy\right)^{r/2} dy'\nonumber\\
 &\leq &
  C\|\nabla u\|_{L^{\nf}(\R^2)}^r+
 C\int_{\R^2} \bigg[\int_{\R^2} {\left|   {\partial_1    {  u}(\psi(y))\over  h(y_2)}
 -{\partial_1    {  u}(\psi(y'))\over  h(y'_2)} \right|^2\over |\psi(y)-\psi(y')|^{2+2\alpha} }
  dy\bigg]^{r/2}dy'\nonumber\\
  &\leq &
  C\|\nabla u\|_{L^{\nf}(\R^2)}^r+
 C\int_{\R^2} \bigg[\int_{\R^2} {\left|   {\partial_1    {  u}(x)\over  h(x_2)}
 -{\partial_1    {  u}(x')\over  h(x'_2)} \right|^2\over |x-x'|^{2+2\alpha} }
  |{\rm det} \vp'(x)| dx\bigg]^{r/2}   |{\rm det} \vp'(x')| \, dx'\nonumber\\
  &\leq &
  C\|\nabla u\|_{L^{\nf}(\R^2)}^r+
 C\int_{\R^2} \bigg[\int_{\R^2} {\left|   {\partial_1    {  u}(x)\over h(x_2)}
 -{\partial_1    {  u}(x')\over h(x'_2)} \right|^2\over |x-x'|^{2+2\alpha} }
 dx\bigg]^{r/2} \nonumber  dx'.
 \end{eqnarray}
 Now take $\eta(x_1,x_2)\in C_0^{\infty}(\RR^2)$
 assuming value $1$ on the support of $\p_1u$ so that
 the support of $\eta$
is just a bit larger than that of $\p_1u$, and
 $h(x_2)=x_2$
 on the support of $\eta$.
Define  ${\tilde h}(x_1,x_2)=\eta(x_1,x_2)/h(x_2) $ and
then    write
\begin{eqnarray*}
  {\partial_1    {  u}(x) \over  h(x_2)}
 -{\partial_1    {  u}(x') \over  h(x'_2)} &=&  {\partial_1    {  u}(x)   {\tilde h}(x)}
 -{\partial_1    {  u}(x')  {\tilde h}(x')} \\
&=& {[\partial_1    {  u}(x)-\partial_1    {  u}(x')] {\tilde h}(x')} +
 {\partial_1    {  u}(x)} [{  {\tilde h}(x)} -{  {\tilde h}(x')}],
\end{eqnarray*}
 which 
  yields
      \begin{eqnarray*}
	\|I_{\al}(\p_1U)\|_{L^r(\RR^2)}^r \!\!\!\!\!\!
 & &\leq
  C\|\nabla u\|_{L^{\nf}(\R^2)}^r+
 C\int_{\R^2} \left(\int_{\R^2} {\left|   {\partial_1    {  u}(x) }
 -{\partial_1    {  u}(x') } \right|^2\over |x-x'|^{2+2\alpha} }
 dx\right)^{r/2}   dx'\\
 & & \hspace{.4in}+ C\|\nabla u\|_{L^{\nf}(\R^2)}^r\int_{\RR^2}  \left(\int_{\R^2}   {|{ {\tilde h}(x)}-{  \tilde h(x')}|^2
 \over |x-x'|^{2+2\alpha}} dx_1dx_2
 \right)^{r/2}   dx'_1dx'_2 \\
  &&\leq
  C\|\nabla u\|_{L^{\nf}(\R^2)}^r+
 C\|  {\partial_1   u}\|_{L^r_{\alpha}(\R^2)}^r+C\|\nabla u\|_{L^{\nf}}^r \|\tilde h \|_{L^r_{\alpha}(\R^2)}^r\\
 &&\le  C\left(\norm{\nabla u}_{L^\infty(\R^2)} + \norm{u}_{L^r_{\gamma}(\R^2)}\right)^r.
 \end{eqnarray*}
A similar argument as the one above shows that
   \begin{eqnarray*}
 \|I_{\al}(\p_2U)\|_{L^r(\RR^2)}^r&=& \int_{\R^2} \left(\int_{\R^2}
 {|\partial_2U(y)-\partial_2U(y')|^2\over |y-y'|^{2+2\alpha} }dy\right)^{r/2} dy'\\[3pt]
&\leq&
C\|\nabla u\|_{L^{\nf}(\R^2)}^r+
 C\|  {\partial_1  u}\|_{L^r_{\alpha}(\R^2)}^r +
 C\|  {\partial_2   u}\|_{L^r_{\alpha}(\R^2)}^r\\
 &\le & C\left(\norm{\nabla u}_{L^\infty(\R^2)} + \norm{u}_{L^r_{\gamma}(\R^2)} \right)^r.
 \end{eqnarray*}
Also, by repeating the preceding argument we obtain,
\[
\norm{I_{\alpha}(U)}_{L^r(\mathbb{R}^2)}\le C\left(\norm{u}_{L^\infty(\R^2)}+\norm{u}_{L^r_{\alpha}(\mathbb{R}^2)}\right)
 \le C\norm{u}_{L^r_{\gamma}(\R^2)},
\]
where we used the Sobolev embedding theorem in the last inequality with $\gamma r>2.$
The proof of Lemma~\ref{var} is now complete.
\end{proof}

For $g,h$ on $\R $ define a the tensor $g\otimes h$  as the following function on
 $\R^2 $ by setting $(g\otimes h)(\xi,\eta) = g(\xi)h(\eta)$.

\begin{lemma}\label{ten}
 Let $f\in L^r_{\ga}(\R)$ supported in  $[-1,1]$,
 and $\wh{\Tht}$ is a smooth function supported in an annulus centered at $0$
 with size comparable to $1$, then we have
 $$
 \big\| f\otimes\wh\Tht\big\|_{ L^r_{\ga}(\R^2)} \le C \| f\|_{ L^r_{\ga}(\R) } \, .
 $$
 \end{lemma}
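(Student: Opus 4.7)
The plan is to work on the Fourier transform side and exploit the tensor product structure. Set $h:=\wh\Tht$; since $h$ is smooth with compact support in an annulus it is Schwartz, and $\widehat{f\otimes h}(y_1,y_2)=\wh f(y_1)\wh h(y_2)$. The aim is to factor the two-dimensional Bessel multiplier as
$$
(1+4\pi^2|y|^2)^{\ga/2} = M(y)\,(1+4\pi^2 y_1^2)^{\ga/2}(1+4\pi^2 y_2^2)^{\ga/2},
$$
where
$$
M(y)=\f{(1+4\pi^2|y|^2)^{\ga/2}}{(1+4\pi^2 y_1^2)^{\ga/2}(1+4\pi^2 y_2^2)^{\ga/2}}\in(0,1],
$$
and to argue that the Fourier multiplier operator $T_M$ is bounded on $L^r(\mathbb R^2)$.

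Granted that, the tensor structure yields the identity
$$
(I-\p_{\xi_1}^2)^{\ga/2}(I-\p_{\xi_2}^2)^{\ga/2}(f\otimes h) = \bigl[(I-\p^2)^{\ga/2}f\bigr]\otimes\bigl[(I-\p^2)^{\ga/2}h\bigr],
$$
whose $L^r(\mathbb R^2)$ norm factors by Fubini as the product of two one-dimensional norms. Since $h$ is Schwartz the second factor is finite, and the first equals $\|f\|_{L^r_\ga(\mathbb R)}$, giving
$$
\|(I-\De)^{\ga/2}(f\otimes h)\|_{L^r(\mathbb R^2)}=\bigl\|T_M\bigl[(I-\p_{\xi_1}^2)^{\ga/2}(I-\p_{\xi_2}^2)^{\ga/2}(f\otimes h)\bigr]\bigr\|_{L^r(\mathbb R^2)}\lesssim \|f\|_{L^r_\ga(\mathbb R)}.
$$

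The main obstacle is establishing the $L^r$-boundedness of $T_M$: while $M$ is smooth and bounded by $1$, the Mikhlin-H\"ormander condition fails in the anisotropic regions where $|y_1|\ll|y_2|$ or vice versa. To bypass this, I would rewrite
$$
M(y) = \bigl(1-\phi(y_1)\phi(y_2)\bigr)^{\ga/2},\qquad \phi(t)=\f{t^2}{1+t^2},
$$
and note that the identity
$$
1-\phi(y_1)\phi(y_2)=\f{1}{1+y_1^2}+\f{1}{1+y_2^2}-\f{1}{(1+y_1^2)(1+y_2^2)}
$$
exhibits $1-\phi(y_1)\phi(y_2)$ as the Fourier-Stieltjes transform of a finite signed Borel measure on $\mathbb R^2$ built from one-dimensional Poisson-type kernels. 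I would then embed $M$ into the analytic family $z\mapsto M^z$ in a vertical strip and interpolate between $z=it$ (where the measure representation produces an $L^r$-bound polynomial in $|t|$) and $z=2N+it$ (handled via Leibniz's rule and the classical Sobolev multiplier theorem, exactly as in the proof of Lemma~\ref{XL1}), which yields the boundedness of $T_M$ on $L^r(\mathbb R^2)$ for all $1\le r\le\infty$.
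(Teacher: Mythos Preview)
Your overall strategy---reduce to $L^r$-boundedness of the multiplier $T_M$ with $M(y)=(1+|y|^2)^{\gamma/2}(1+y_1^2)^{-\gamma/2}(1+y_2^2)^{-\gamma/2}$, then use the tensor factorization $(I-\partial_1^2)^{\gamma/2}(I-\partial_2^2)^{\gamma/2}(f\otimes h)=[(I-\partial^2)^{\gamma/2}f]\otimes[(I-\partial^2)^{\gamma/2}h]$---is sound and, once $T_M$ is known to be bounded, cleaner than the paper's proof. (The paper works on the physical side via Stein's square-function characterization of $L^r_\gamma$, inserting a cutoff and estimating the integrals $\int\!\!\int|\partial_j(f\otimes\widehat\Theta)(x)-\partial_j(f\otimes\widehat\Theta)(x')|^2|x-x'|^{-2-2\alpha}dx\,dx'$ by hand.)

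The gap is your justification that $T_M$ is bounded. The interpolation template of Lemma~\ref{XL1} does not transfer here: at $z=it$ you need the multiplier $M^{it}=(1-\phi(y_1)\phi(y_2))^{it\gamma/2}$ to be $L^r$-bounded, and the fact that $1-\phi(y_1)\phi(y_2)$ is the Fourier--Stieltjes transform of a finite measure says nothing about its \emph{imaginary} powers---raising $\widehat\nu$ to the power $it$ does not yield another $\widehat\mu$. Your appeal to Leibniz's rule at $z=2N+it$ is also misplaced: there is no product with a cutoff in $T_M$, and $M^{2N+it}=M^{2N}M^{it}$ still contains the unproved factor $M^{it}$, so the argument is circular. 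The correct route is to drop the interpolation and verify directly that $M$ satisfies the Marcinkiewicz product condition $|y_1^ay_2^b\partial_{y_1}^a\partial_{y_2}^bM|\le C$ for $a,b\in\{0,1\}$: writing $M=N^{\gamma/2}$ with $N=1-\phi(y_1)\phi(y_2)$, the key estimate is $|y_j\partial_{y_j}N|\lesssim N$, which follows from $N\approx(1+y_1^2)^{-1}+(1+y_2^2)^{-1}$ and $|y_1\phi'(y_1)|\phi(y_2)\lesssim(1+y_1^2)^{-1}$. This gives $T_M$ bounded on $L^r$ for $1<r<\infty$, after which your proof goes through.
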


\begin{proof}
We use the same idea as in the proof of Lemma \ref{var}.
It suffices to prove that
$f\otimes\wh\Theta\in L^r_1(\R^2)$ and that $I_{\al}(\p^{\beta}(f\otimes\wh\Theta))
\in L^r(\R^2)$ with $|\beta|=1$.
It is easy to check that $\|f\otimes\wh\Theta\|_{L^r_1}\le C\|f\|_{L^r_1}$, so we only prove  that
$I_{\al}(\p_{\xi}(f\otimes\wh\Theta))
\in L^r(\R^2)$.

Note that $f\otimes\wh\Theta$ is compactly supported and we can choose
a function $\vp(\xi,\eta)\in C^{\nf}_0(\R^2)$ assuming $1$ on the support
of $f\otimes\wh\Theta$ and therefore
$f\otimes\wh\Theta=f(\xi)\vp(\xi,\eta)\wh\Theta(\eta)\vp(\xi,\eta)$.
Then $\intrr|I_{\al}(\p_{\xi}(f\otimes\wh\Theta))|^r d\xi d\eta $ is split  into the parts
$$
\intrr\left(\intrr\f{|[f'(\xi)\vp(\xi,\eta)-f'(\xi')
\vp(\xi',\eta')]\wh\Theta(\eta')\vp(\xi',\eta')|^2}
{|(\xi,\eta)-(\xi',\eta')|^{2+2\al}}d\xi'd\eta'\right)^{r/2}d\xi d\eta
$$
and
$$
\intrr\left(\intrr\f{|f'(\xi)\vp(\xi,\eta)[\wh\Theta(\eta)\vp(\xi,\eta)-
\wh\Theta(\eta')\vp(\xi',\eta')]|^2}
{|(\xi,\eta)-(\xi',\eta')|^{2+2\al}}d\xi'd\eta'\right)^{r/2}d\xi d\eta.
$$
We prove only that the first one is finite since the latter can be proved similarly.

To prove the boundedness of the first one, we split it further via the identity
$$
f'(\xi)\vp(\xi,\eta)-f'(\xi')
\vp(\xi',\eta')=
(f'(\xi)-f'(\xi'))
\vp(\xi,\eta)
+f'(\xi')(\vp(\xi,\eta)-
\vp(\xi',\eta')).
$$
The integral containing the second part is finite because $f'$ is bounded and
$\vp\in L^r_{\ga}(\R^2)$.
For the other part, a simple change of variable
$\eta'\rightarrow(\eta-\eta')/(\xi-\xi')$ shows that it is equal to
$$
C\intrr\left(\intr\f{| f'(\xi)-f'(\xi')|^2}
{|\xi-\xi'|^{1+2\al}}d\xi'\right)^{r/2} \abs{\vp(\xi,\eta)}d\xi d\eta,
$$
which, by  Lemma~\ref{le6.2},  is bounded by
$\|f\|_{L^r_{\ga}(\R)}^r$ since  $\vp\in C^{\nf}_0(\R^2)$.
\end{proof}

 \begin{lemma}\label{lem.PhiR1}
 Let $\gamma\in (1,2)$ and $1<r<\f{1}{\ga-1}$. Then
 $\norm{\Phi\varphi}_{L^r_\gamma(\mathbb{R})}<\infty,$
 where $\varphi$ is a smooth function with compact support, and $\Phi$ is the function in \eqref{e6.2}.
 \end{lemma}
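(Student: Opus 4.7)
The plan is to apply Stein's Lemma~\ref{le6.2}(ii) with $\alpha=\gamma$, which reduces the claim $\Phi\varphi\in L^r_\gamma(\mathbb{R})$ to showing that both $\Phi\varphi$ and its distributional derivative $(\Phi\varphi)'$ lie in $L^r_{\gamma-1}(\mathbb{R})$, where the fractional index $\gamma-1$ now lies in $(0,1)$. Since $\Phi\varphi$ is Lipschitz continuous with compact support, it already belongs to the classical Sobolev space $L^r_1(\mathbb{R})$, and the continuous embedding $L^r_1\hookrightarrow L^r_{\gamma-1}$ handles this piece for free.

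For the derivative, the product rule (valid almost everywhere) gives $(\Phi\varphi)'=\Phi'\varphi+\Phi\varphi'$. The second summand $\Phi\varphi'$ is again Lipschitz with compact support, so it belongs to $L^r_{\gamma-1}$ by the same embedding argument. The delicate term is the first one: $\Phi'\varphi = 2\chi_{(-1,0)}\varphi$, an indicator-type function with jump discontinuities at $-1$ and $0$. At this point the hypothesis $r<1/(\gamma-1)$, i.e.\ $(\gamma-1)r<1$, becomes essential, since this is precisely the sharp threshold for BV-type jumps to be compatible with fractional Sobolev regularity of order $\gamma-1$.

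To show $\chi_{(-1,0)}\varphi\in L^r_{\gamma-1}(\mathbb{R})$, I would pass to the Fourier side. Writing $\chi_{(-1,0)}\varphi$ as a finite sum of products of smooth cutoffs with translates of a Heaviside function, each piece has Fourier transform of size $O(1/|\xi|)$. Multiplying by the Bessel symbol $(1+4\pi^2|\xi|^2)^{(\gamma-1)/2}$, whose growth is $|\xi|^{\gamma-1}$, and inverting the transform produces a function with local singularity of order $|y-a|^{-(\gamma-1)}$ at each jump $a\in\{-1,0\}$ and rapid decay elsewhere; these singularities are locally in $L^r$ precisely when $(\gamma-1)r<1$. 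The main obstacle I anticipate is making the role of the smooth cutoff $\varphi$ rigorous, which can be handled by a Kato--Ponce-type commutator inequality (in the spirit of Lemma~\ref{XL1}) expressing that multiplication by a function in $C^\infty_c(\mathbb{R})$ is bounded on $L^r_\alpha(\mathbb{R})$ for $0<\alpha<1$ and $1<r<\infty$.
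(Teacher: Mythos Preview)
Your proposal is correct and close in spirit to the paper's argument, but organized differently. The paper works directly at order~$\gamma$: it uses the homogeneous Kato--Ponce inequality to strip off the cutoff~$\varphi$, then computes explicitly
\[
\big(|\xi|^{\gamma}\widehat{\Phi}\,\big)^{\vee}(x)=c_\gamma\big(|x+1|^{1-\gamma}-|x|^{1-\gamma}\big),
\]
by passing through the identity $2\pi i\xi\,\widehat{\Phi}=\widehat{\Phi'}=2\,\widehat{\chi_{[-1,0]}}$, and checks that this lies in $L^r$ exactly when $r(\gamma-1)<1$. Your route inserts an additional reduction via Lemma~\ref{le6.2}(ii), which brings the problem down to order $\gamma-1$ and replaces the piecewise-linear $\Phi$ by the piecewise-constant $\Phi'=2\chi_{(-1,0)}$; at that point you carry out essentially the same Fourier computation the paper does, just one derivative lower. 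Both arguments ultimately hinge on the same singularity analysis and the same Kato--Ponce step for the smooth cutoff, so what you gain is mainly a cleaner conceptual separation (the jump discontinuity is isolated in $\Phi'$), at the cost of one extra lemma invoked.

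One small remark on your Heaviside decomposition: for an \emph{individual} piece $H(\cdot-a)\psi$ the ``rapid decay elsewhere'' claim is true for the inhomogeneous operator $(I-\Delta)^{(\gamma-1)/2}$ thanks to the exponential decay of the Bessel kernel (writing $(I-\Delta)^{(\gamma-1)/2}=(I-\Delta)\circ G_{3-\gamma}\ast$), but it would fail for the homogeneous $(-\Delta)^{(\gamma-1)/2}$, since $|x-a|^{-(\gamma-1)}$ alone is not in $L^r$ at infinity under the hypothesis $(\gamma-1)r<1$. You are working with the inhomogeneous norm, so this is fine, but it is worth making explicit. Alternatively, you could bypass the Heaviside splitting entirely and work with $\chi_{(-1,0)}$ as a whole, where the two endpoint contributions cancel at infinity; this is what the paper's closed-form expression above exhibits.
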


 \begin{proof}
   To obtain the claim, we need to show that
   $D^{\ga }(\varphi \Phi )= \big((1+|\xi |^2)^{\ga/2}   \wh{ \varphi \Phi } \big)\spcheck  \in L^r(\RR)$.
 Since
 \[
 \norm{ D^\ga (\varphi \Phi )}_{L^r(\mathbb{R})}\approx \norm{\varphi\Phi}_{L^r(\mathbb{R})}
 +\norm{ \big(  |\xi |  ^{\ga }   \wh{ \varphi \Phi } \big)\spcheck}_{L^r(\mathbb{R})},
 \]
 and trivially $\varphi   \Phi \in L^r(\RR)$,
 we reduce the proof to establishing
$\big\| \big(  |\xi |  ^{\ga }   \wh{ \varphi \Phi } \big)\spcheck\big\|_{L^r(\mathbb{R})}<\infty.$ 
By the Kato-Ponce inequality for homogeneous type \cite{CW}, \cite{MuPiTaTh}, \cite{GrOh}, it suffices to show that
$ \big(  |\xi |  ^{\ga }   \wh{  \Phi } \big)\spcheck$ lies in $L^r(\RR)$.
Indeed, for   $\ga \in (1,2)$ we write
\begin{align*}
\wh{\Phi}(\xi ) |\xi|^{\ga}  &= \f{1}{\xi} \, \xi \, \wh{\Phi}(\xi ) \, |\xi|^{\ga } =
\f{1}{2\pi i } \f{1}{\xi} \,   \wh{\Phi'}(\xi ) \, |\xi|^{\ga }\\
=&-i  \f{1}{\pi \xi  } \,   \wh{\chi_{[-1,0]}}(\xi ) \, |\xi|^{\ga }
=- i  \f{1}{\pi \xi  } \,  \f{e^{2\pi i \xi}-1 }{2\pi i \xi}      \, |\xi|^{\ga }\\
=& -i    \f{1}{\pi   } \,  \f{e^{2\pi i \xi }-1 }{2\pi i } \, |\xi|^{\ga-2}
=-\f{1}{2\pi^2}  (e^{2\pi i \xi }-1  ) \, |\xi|^{\ga-2}\, .
\end{align*}
Taking inverse Fourier transforms we obtain that
$$
\big( \wh{\Phi}(\xi ) |\xi|^{\ga}\big)\spcheck (x) = c_\ga (|x+1|^{1-\ga}-|x|^{1-\ga})
$$
and this function lies in $L^r(\RR)$ when $1<r<\f{1}{\ga-1}$ and $\ga$ is very close to $2$.
 \end{proof}

 The preceding result can be lifted to $\mathbb{R}^2$ as follows.
 \begin{lemma}\label{lem.PhiR2}
 Let $\gamma\in (1,2)$ and $1<r<\f{1}{\ga-1}$, and let ${\theta}$ be a
 function supported in $\frac{1}{2}\le \abs{\xi}\le 2$ on the real line.
 Define $U(\xi,\eta) = \Phi(\frac{\xi}{\eta})\theta(\frac{\xi}{\eta})\widehat{\psi}(\xi,\eta),$
 where $\widehat{\psi}$ is a smooth function supported in an annulus centered at zero. Then
 $\norm{U}_{L^r_\gamma(\mathbb{R}^2)}<\infty$.
 \end{lemma}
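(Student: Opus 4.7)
The strategy is to reduce matters to Lemma \ref{var} via a change of variables. Observe first that since $\theta(\xi/\eta)$ forces $|\xi|\approx|\eta|$ and $\widehat{\psi}$ is supported in an annulus centered at the origin, $U$ is supported in a compact set bounded away from the $\xi$- and $\eta$-axes. By treating the sign cases $\eta>0$ and $\eta<0$ separately (which is symmetric) and a harmless dilation of $\widehat{\psi}$, I may assume $\eta$ lies in an interval where the truncation function $h$ from \eqref{eq.func.h} satisfies $h(\eta)=\eta$. I now set
$$ u(y_1,y_2):=\Phi(y_1)\,\theta(y_1)\,\widehat{\psi}\!\left(y_1\,h(y_2),y_2\right). $$
Then on the support of $U$ one has $U(y_1,y_2)=u(y_1/h(y_2),y_2)$, which is precisely the composition for which Lemma \ref{var} gives an $L^r_\gamma(\mathbb{R}^2)$ bound.

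To invoke Lemma \ref{var} I must verify $\nabla u\in L^\infty(\mathbb{R}^2)$ and $u\in L^r_\gamma(\mathbb{R}^2)$. The first is immediate: $\Phi$ is Lipschitz, $\theta$ and $\widehat{\psi}$ are smooth, and everything is compactly supported. For the Sobolev bound, I would write $u$ as the product of $\Phi(y_1)\theta(y_1)$ (which depends only on $y_1$) with the smooth compactly supported factor $\widehat{\psi}(y_1 h(y_2),y_2)$. Lemma \ref{lem.PhiR1} yields $\Phi\theta\in L^r_\gamma(\mathbb{R})$ in the required range $1<\gamma<2$, $1<r<1/(\gamma-1)$. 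Tensoring with a bump $\chi(y_2)$ that equals one on the $y_2$-support of $u$ and applying Lemma \ref{ten} gives $(\Phi\theta)\otimes\chi\in L^r_\gamma(\mathbb{R}^2)$. Then the remaining multiplication by the smooth compactly supported function $\widehat{\psi}(y_1 h(y_2),y_2)$ preserves $L^r_\gamma(\mathbb{R}^2)$ by the Kato--Ponce type product rule used in \cite{GrOh} (or directly by Lemma \ref{XL1}). Having verified both hypotheses, Lemma \ref{var} delivers $\|U\|_{L^r_\gamma(\mathbb{R}^2)}<\infty$, completing the argument.

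The main obstacle is the bookkeeping needed to match the precise rectangular support required by Lemma \ref{var}, since $\theta$ is supported in $\tfrac{1}{2}\le|y_1|\le 2$ rather than in $|y_1|\le 101/100$, and $\widehat{\psi}$ may sit in an arbitrary annulus. This is handled by a preliminary dilation of the $y_1$ variable (absorbed into the definition of $\theta$) together with, if necessary, a finite partition of unity splitting $U$ into pieces each of which—after an affine rescaling—fits the hypothesis of Lemma \ref{var}. These reductions are routine because $U$ is compactly supported and $L^r_\gamma$ is invariant under affine changes of variables up to constants depending only on the data.
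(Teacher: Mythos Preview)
Your proposal is correct and follows essentially the same route as the paper: define $u(y_1,y_2)=\Phi(y_1)\theta(y_1)\widehat{\psi}(y_1 h(y_2),y_2)$, invoke Lemma~\ref{var} to reduce to bounding $\|u\|_{L^r_\gamma}$, then handle the latter by inserting a smooth cutoff in $y_2$, applying Lemma~\ref{ten} to the tensor $(\Phi\theta)\otimes\chi$, and finishing with a Kato--Ponce product estimate against the smooth factor $\widehat{\psi}(y_1 h(y_2),y_2)$. Your additional remarks on the sign of $\eta$ and on rescaling to fit the exact support rectangle in Lemma~\ref{var} are points the paper leaves implicit, but the substance of the two arguments is the same.
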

 \begin{proof}
     Set
 $$u(\xi,\eta) = \Phi(\xi ) \theta(\xi ) \wh{\Psi} (\xi\eta,\eta)$$ and
$$U(\xi,\eta)= \Phi(\xi/\eta) \theta(\xi/\eta) \wh{\Psi} (\xi,\eta).$$
Since $h(\eta)=\eta$ on the support of the function $U.$ We now apply Lemma \ref{var} to obtain
$$
\|U\|_{L^r_{\ga}(\R^2)}\le C\big(\|\nabla u\|_{L^{\nf}(\R^2)}+\|u\|_{L^r_{\ga}(\R^2)}\big).
$$
Thus, it is enough to show that $\|u\|_{L^r_{\ga}(\R^2)}<\infty.$
We introduce a compactly supported
smooth function $\wh{\Theta}(\eta)$ which is equal to $1$ on the support of
$\eta\mapsto \theta(\xi )\wh{\Psi} (\xi\eta,\eta)$ for any $\xi$.
the Kato-Ponce inequality (\cite{KP} \cite{GrOh}) allows us to
estimate the Sobolev norm of $u$ as follows:
\begin{align*}
\norm{u}_{L^r_\ga(\RR^2)}
\,\,=&\,\,\big\| \Phi(\xi )\theta(\xi ) \wh{\Theta}(\eta)\wh{\Psi} (\xi\eta,\eta)\big\|_{L^r_\ga(\RR^2)}\\
 \lesssim&\,\,
\big\|  \Phi (\xi)\theta(\xi ) \wh{\Theta}(\eta)\big\|_{L^r_\ga(\RR^2)}
 \big\|  \wh{\Psi} (\xi\eta,\eta)  \big\|_{L^\infty(\RR^2)} \\
&+
\big\|   \wh{\Psi} (\xi\eta,\eta)\big\|_{L^r_\ga (\RR^2)}
\big\|  \Phi(\xi)\theta(\xi )\wh{\Theta}(\eta) \big\|_{L^\infty(\RR^2)}\, .
\end{align*}
We are left with establishing $ \|  \Phi (\xi)\theta(\xi ) \wh{\Theta}(\eta) \|_{L^r_\ga(\RR^2)}<\nf$,
since all other terms on the right of the above inequality are finite.
This is achieved via Lemmas \ref{lem.PhiR1} and \ref{ten}. Thus  the proof of Lemma \ref{lem.PhiR2} is
complete.
 \end{proof}

 Using these ideas we are able to deduce the following result concerning  ${ \mathcal C}_1 $.

\begin{prop}\label{C1}
The Calder\'on commutator ${ \mathcal C}_1 $ maps
$L^{p_1}(\RR) \times L^{p_2}(\RR)$ to
$L^p(\RR)$ when $1/p_1+1/p_2=1/p$, $1<p_1,p_2 <\infty$, and $1/2<p<\infty$.
\end{prop}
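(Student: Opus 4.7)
The plan is to realize $\mathcal{C}_1$ as the bilinear multiplier operator $T_\sigma$ with symbol $\sigma(\xi,\eta) = -i\pi\,\sgn(\eta)\,\Phi(\xi/\eta)$ as in \eqref{e6.2}, and then appeal to Theorem~\ref{1dil} (equivalently Corollary~\ref{cor1}) with $m=2$ and $n=1$. For fixed $p_1,p_2\in(1,\infty)$ with $1/p=1/p_1+1/p_2$, I will choose a single exponent $\beta\in(1/2,1)$ close enough to $1$ that $1/\beta<\min(p_1,p_2)$, together with $r\in(1/\beta,1/(2\beta-1))$ (nonempty since $\beta<1$). Because $p>1/2$ forces $p_1,p_2>1$, this choice is available for every admissible pair. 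The entire proof then reduces to verifying the Sobolev hypothesis \eqref{dilationj} with $\gamma_{11}=\gamma_{21}=\beta$, and via the embedding $L^r_{2\beta}(\mathbb{R}^2)\subset L^r_{\beta,\beta}(\mathbb{R}^2)$ noted in the text it suffices to bound $\|\sigma\widehat\Psi\|_{L^r_{2\beta}(\mathbb{R}^2)}$.

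The first step exploits the fact that $\sigma$ is homogeneous of degree zero in $(\xi,\eta)$: $\sigma(2^j\xi,2^j\eta)=\sigma(\xi,\eta)$ for every $j\in\mathbb{Z}$, so the supremum over $j$ collapses to a single uniform estimate. To handle $\|\sigma\widehat\Psi\|_{L^r_{2\beta}(\mathbb{R}^2)}$ I would introduce a smooth conical partition of unity $\widehat\Psi=\widehat\Psi_a+\widehat\Psi_b+\widehat\Psi_c$, with the three pieces supported respectively in the regions $\{|\eta|\le|\xi|/2\}$, $\{|\xi|/2<|\eta|<2|\xi|\}$, and $\{|\xi|\le|\eta|/2\}$ intersected with the annulus.

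On the support of $\widehat\Psi_a$ one has $|\xi/\eta|>2$, hence $\Phi(\xi/\eta)=\sgn(\xi)$; a quick case analysis on the four sign combinations of $(\xi,\eta)$ shows that $\sgn(\eta)\Phi(\xi/\eta)=\sgn(\xi)$ extends continuously across $\eta=0$, so $\sigma\widehat\Psi_a$ splits as the sum of two smooth, compactly supported summands (according to the sign of $\xi$, which is bounded away from $0$ on that set), each with trivially finite $L^r_{2\beta}$ norm. Symmetrically, on the support of $\widehat\Psi_c$ one has $|\xi/\eta|<1/2$, hence $\Phi(\xi/\eta)=1+2\xi/\eta$ is smooth; splitting once more according to the sign of $\eta$ (bounded away from $0$ on that set) yields smooth, compactly supported summands, again obviously in $L^r_{2\beta}$.

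The genuinely delicate case is $\sigma\widehat\Psi_b$, where $|\xi|\sim|\eta|$ and both the derivative discontinuity of $\Phi$ at $\xi/\eta=-1$ and its merely Lipschitz regularity there prevent any trivial bound. This is exactly what Lemma~\ref{lem.PhiR2} (applied with the lemma's $\gamma$ equal to $2\beta\in(1,2)$ and the chosen $r$) is designed to handle: after splitting $\widehat\Psi_b$ into the two half-plane pieces $\eta>0$ and $\eta<0$ to make $\sgn(\eta)$ a constant on each and writing the result in the form $\Phi(\xi/\eta)\theta(\xi/\eta)\widehat\psi(\xi,\eta)$ with $\theta$ supported in $|\xi/\eta|\in[1/2,2]$, the lemma delivers the required $L^r_{2\beta}$ bound. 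Assembling the three pieces verifies \eqref{dilationj}, and Theorem~\ref{1dil} then yields the proposition. The main obstacle is precisely the middle piece $\sigma\widehat\Psi_b$, but the groundwork already laid in Lemmas~\ref{var}, \ref{ten}, \ref{lem.PhiR1}, and~\ref{lem.PhiR2} reduces it to a routine invocation.
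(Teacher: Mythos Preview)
Your overall strategy matches the paper's: exploit homogeneity to reduce to a single dyadic piece, localize away from the singularities of $\Phi$, use Lemmas~\ref{var}--\ref{lem.PhiR2} for the non-smooth part, and invoke the multiplier theorem. The parameter choices $\beta\in(1/2,1)$, $r\in(1/\beta,1/(2\beta-1))$, and the embedding $L^r_{2\beta}\subset L^r_{\beta,\beta}$ are all correct, and your analysis of $\sigma\widehat\Psi_a$ (where $\sgn(\eta)\Phi(\xi/\eta)=\sgn(\xi)$ extends continuously across $\eta=0$) is fine.

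There is, however, a genuine gap in the treatment of $\widehat\Psi_c$. You assert that on $|\xi/\eta|<1/2$ one has $\Phi(\xi/\eta)=1+2\xi/\eta$, hence smooth; but $\Phi(s)=1+2s$ only for $-1<s\le 0$, while $\Phi(s)=1$ for $s>0$. Thus $\Phi$ has a corner at $s=0$, which lies \emph{inside} the cone $|\xi/\eta|<1/2$, and $\sigma\widehat\Psi_c$ is not smooth across $\xi=0$. This piece needs the same care as your $\widehat\Psi_b$. The paper avoids this by decomposing differently: it localizes with a cutoff $\theta(\xi/\eta)$ supported in small neighborhoods of \emph{both} corners $s=-1$ and $s=0$, calling the resulting piece $\sigma_1$; the complement $\sigma_2$ is then genuinely smooth on the annulus. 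The fix for your argument is easy: either adopt the paper's corner-based localization, or further split $\widehat\Psi_c$ near $\xi=0$ and observe that there $|\eta|$ is bounded away from $0$ on the annulus, so the change of variables of Lemma~\ref{var} (or the method behind Lemma~\ref{lem.PhiR2}) applies equally well to this corner and reduces matters to Lemma~\ref{lem.PhiR1}.
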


\begin{proof}
Note that $\si(\xi,\eta)=\sgn(\eta)\Phi(\xi/\eta)$ has an obvious modification which is continuous
on $\R^2\backslash\{0\}$. We denote the latter by $\sgn(\eta)\Phi(\xi/\eta)$ as
well since there is no chance to introduce any confusion.

We introduce a smooth function with compact support $\theta$ on the real line which is supported
in two small intervals, say, of length $1/100$ centered at the points $-1$ and $0$. Then we write
$$
1= \theta (\xi/\eta) + 1-\theta (\xi/\eta)
$$
and we decompose the function $\sgn(\eta)\Phi(\xi/\eta) =\si_1(\xi,\eta)+\si_2(\xi,\eta)$, where
$\si_1(\xi,\eta)=\sgn(\eta)\Phi(\xi/\eta)\theta (\xi/\eta)$
and $\si_2(\xi,\eta)=\sgn(\eta)\Phi(\xi/\eta)(1-\theta (\xi/\eta))$.
 Let $\wh{\Psi}$ be a smooth bump supported in the annulus
$1/2<|(\xi,\eta)| <3/2$ in $\RR^2$.
The function $\si_2$ is smooth away from zero and $\si_2\wh\Psi$ lies in $L^r_{\ga}(\R^2)$ for any $r,\ga>1$
Also, $\si_1\wh\Psi$ lies in $L^r_{\ga}(\R^2)$ with
$r\ga>1$.
in view of Lemma \ref{lem.PhiR2}.  Then Corollary \ref{cor1} implies the
required conclusion.
\end{proof}

\bigskip

 \subsection{Commutators  of Calder\'on-Coifman-Journ\'e}
 Now
   we focus on the boundedness
properties   of the
following $n$-dimensional version of  ${\mathcal  C}_1 $:
\begin{align}\begin{split}\label{PTCJ}
\hspace{-29pt} &  { \mathcal C}_1^{(n)}(f,a)(x)\\
&     =\textup{p.v.}\!\!
\int_{\mathbb R^n} \! f(y) \!  \left(
\prod_{l=1}^n  { 1\over
 (y_l-x_l)^2 } \right ) \! \int_{x_1}^{y_1} \!\!\!
 \cdots \!\!\!\int_{x_n}^{y_n}  a(u_1, \dots, u_n) \, du_1\cdots  du_n \, dy,
\end{split}\end{align}
where  $f$ is a function on $\mathbb R^n$, and
 $x=(x_1,\dots, x_n)\in \mathbb R^n$, $y=(y_1, \dots , y_n) \in \mathbb R^n$.
 The operator ${ \mathcal C}_1^{(n)}$ was introduced by a suggestion of  Coifman
 when $n=2$.
 The $L^2\times L^\nf\to L^2$ bound for ${ \mathcal C}_1^{(2)}$ was studied  by Aguirre \cite{Aguirre} and
   Journ\'e \cite{Jo1, Jo2}, namely,
\begin{equation}\label{Journe1Rev}
\|{ \mathcal C}_1^{(2)}(f,a)\|_{L^2({\mathbb R^2})}
\leq C \|a\|_{L^{\infty}(\mathbb R^2)}\|f\|_{L^{2}(\mathbb R^2)}.
\end{equation}
For general $n\geq 2$, boundedness for $\mathcal C_1^{(n)}$ from $L^{p_1}\times L^{p_2}$ to $L^p$ for $p>1/2$,
 can be derived by
Muscalu's work  on Calder\'on
commutators on polydiscs \cite[Theorem 6.1]{Mu3}  via time-frequency analysis.

In this section we  will apply Corollary~\ref{less1} to
obtain a direct proof of the boundedness of ${ \mathcal C}_1^{(n)} $
from $L^{p_1}(\mathbb R^n) \times L^{p_2}(\mathbb R^n)$ to
  $L^p(\mathbb R^n)$ in the full range of $p>1/2$. 


\begin{prop}\label{th7.1}

 Let  $1<p_1, p_2<\infty$,   $1/2<p<\infty$ and $1/p={1/p_1}+{1/p_2}.$
Then the operator  ${ \mathcal C}_1^{(n)}(f,a)$ is bounded from $L^{p_1}({\mathbb R}^n)\times L^{p_2}({\mathbb R}^n)$
into $L^p({\mathbb R}^n)$, i.e.,
\begin{eqnarray*}
\|{ \mathcal C}_1^{(n)}(f,a)\|_{{L^p(\RR^n)}}\leq C_p\|a\|_{L^{p_1}(\RR^n)}\|f\|_{L^{p_2}(\RR^n)}.
\end{eqnarray*}
\end{prop}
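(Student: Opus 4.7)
The plan is to write $\mathcal{C}_1^{(n)}$ as a bilinear Fourier multiplier whose symbol factors as a tensor product across the $n$ coordinates and then to invoke Corollary~\ref{less1} with $m=2$. Carrying out the Fourier inversion in each coordinate separately, exactly as in the derivation of \eqref{e6.2}, and using the fact that both the kernel $\prod_{l=1}^n (y_l - x_l)^{-2}$ and the iterated integration $\int_{x_1}^{y_1} \cdots \int_{x_n}^{y_n}$ split across coordinates, I obtain
$$\mathcal{C}_1^{(n)}(f,a)(x) = (-i\pi)^n \int_{\mathbb{R}^n}\!\!\int_{\mathbb{R}^n} \widehat{f}(\xi) \widehat{a}(\eta) \sigma(\xi,\eta) e^{2\pi i x \cdot (\xi+\eta)}\, d\xi\, d\eta,$$
where $\sigma(\xi, \eta) = \prod_{l=1}^n \sigma_l(\xi_l, \eta_l)$ and $\sigma_l(\xi_l, \eta_l) = \sgn(\eta_l) \Phi(\xi_l / \eta_l)$ with $\Phi$ as in \eqref{e6.2}. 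Thus $\sigma$ has exactly the tensor-product form featured in Corollary~\ref{less1}.

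Following the splitting used in Proposition~\ref{C1}, I decompose $\sigma_l = \sigma_l^{\flat} + \sigma_l^{\sharp}$ by setting $\sigma_l^{\flat} = \sigma_l \cdot \theta(\xi_l/\eta_l)$ and $\sigma_l^{\sharp} = \sigma_l \cdot (1-\theta(\xi_l/\eta_l))$, where $\theta$ is a smooth cutoff supported in small neighborhoods of $-1$ and $0$. Expanding $\prod_{l=1}^n (\sigma_l^{\flat} + \sigma_l^{\sharp})$ produces a sum of $2^n$ tensor-product symbols $\prod_{l=1}^n \tau_l$ with $\tau_l \in \{\sigma_l^{\flat}, \sigma_l^{\sharp}\}$, and I apply Corollary~\ref{less1} to each summand separately.

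To verify hypothesis \eqref{dilationjQQ}, I observe that each $\tau_l$ is homogeneous of degree zero, so $\tau_l(2^k \cdot) = \tau_l$ and the supremum over $k$ is trivial. For the smooth pieces $\sigma_l^{\sharp}$, the product $\sigma_l^{\sharp} \widehat{\Psi_\ell}$ is smooth on the annulus, since $\eta_l$ is bounded away from zero on the support of $(1-\theta(\xi_l/\eta_l))\widehat{\Psi_\ell}$ restricted to the annulus and so $\sgn(\eta_l)$ is locally constant there; hence this symbol lies in $L^r_\gamma(\mathbb{R}^2)$ for every $r,\gamma$. For the rough pieces $\sigma_l^{\flat}$, Lemma~\ref{lem.PhiR2} gives $\sigma_l^{\flat} \widehat{\Psi_\ell} \in L^r_\gamma(\mathbb{R}^2)$ whenever $\gamma \in (1,2)$ and $1 < r < 1/(\gamma-1)$. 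The isotropic-to-anisotropic embedding $L^r_\gamma(\mathbb{R}^2) \subset L^r_{(\gamma/2,\, \gamma/2)}(\mathbb{R}^2)$ then delivers the mixed-smoothness estimate required by \eqref{dilationjQQ} with $\gamma_{1\ell} = \gamma_{2\ell} = \gamma/2$.

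Finally, given $p_1, p_2 > 1$, I pick $\gamma \in (1,2)$ with $\gamma > 2/\min(p_1, p_2)$, set $\gamma_{1\ell} = \gamma_{2\ell} = \gamma/2$, and choose $r$ with $2/\gamma < r < 1/(\gamma-1)$; this is possible precisely because $\gamma < 2$. These choices satisfy $r\gamma_{i\ell} > 1$ and $p_i\gamma_{i\ell} > 1$, so all hypotheses of Corollary~\ref{less1} are met for each tensor summand, and summing the $2^n$ contributions yields the claimed $L^{p_1} \times L^{p_2} \to L^p$ boundedness of $\mathcal{C}_1^{(n)}$ in the full range $p > 1/2$. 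The main obstacle is arranging the parameter constraints $r\gamma > 2$, $r(\gamma-1) < 1$, and $p_i \gamma > 2$ simultaneously; this forces $\gamma$ to be chosen close to $2$ and $r$ close to $1$ when $\min(p_1, p_2)$ is close to $1$, which is the sharp endpoint regime where $p$ approaches $1/2$.
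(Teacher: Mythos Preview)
Your proof is correct and follows essentially the same route as the paper: express $\mathcal{C}_1^{(n)}$ as a tensor-product bilinear multiplier and invoke Corollary~\ref{less1}, reducing to the one-variable Sobolev estimate already established in Proposition~\ref{C1}. The paper does this more directly, applying Corollary~\ref{less1} to the undecomposed factors $\sigma_l$ and citing Proposition~\ref{C1} wholesale for the bound $\sigma_l(2^k\cdot)\widehat{\Psi}\in L^r_\gamma(\mathbb{R}^2)$ uniformly in $k$; your expansion into $2^n$ tensor summands is correct but unnecessary.

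Two small corrections. First, your justification that $\sigma_l^{\sharp}\widehat{\Psi_\ell}$ is smooth because ``$\eta_l$ is bounded away from zero on the support'' is not right: on the annulus one can have $|\xi_l|\approx 1$ and $\eta_l\to 0$, and then $1-\theta(\xi_l/\eta_l)=1$. The correct reason is that for $|\xi_l/\eta_l|$ large one has $\Phi(\xi_l/\eta_l)=\pm 1$ and $\theta=0$, so $\sgn(\eta_l)\Phi(\xi_l/\eta_l)(1-\theta(\xi_l/\eta_l))$ is locally constant (hence smooth) near $\eta_l=0$, $\xi_l\neq 0$. Second, Corollary~\ref{less1} requires $1\le r\le 2$; your choice $2/\gamma<r<1/(\gamma-1)$ should be supplemented by $r\le 2$, which is always possible since $2/\gamma<2$ for $\gamma>1$.
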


\begin{proof} 
 The operator  ${ \mathcal C}^{(n)}_1(f,a)$
 is a bilinear operator  which can also be expressed in bilinear Fourier multiplier form as
 \begin{eqnarray*}
   { \mathcal C}^{(n)}_1(f,a)(x)
= (-i\pi)^n  \iint_{\mathbb R^n\times \mathbb R^n} \wh{f}(\xi_1,\cdots,  \xi_n)\, \wh{a}(\eta_1,\cdots, \eta_n)\,
e^{2\pi i x\cdot(\xi+\eta)  } m(\xi; \eta)
    \,
d\xi d\eta\, ,
\end{eqnarray*}
where the  symbol $m$ is given by
$$
m(\xi; \eta)=\prod_{i=1}^n\left[\sgn (\eta_i) \,
  \Phi\Big({\xi_i\over \eta_i}\Big)\right],
$$
and $\xi=(\xi_1, \cdots, \xi_n)$ and $\eta=(\eta_1, \cdots, \eta_n).$
Since $m(\xi,\eta)=\prod_{i=1}^n\si(\xi_i,\eta_i)$
is a product of $n$ equal pieces, by Corollary~\ref{less1},
it suffices to verify that
$\sup_{k\in \mathbb Z}\|\si(2^k\cdot)\wh\Psi\|_{L^r_{\ga/2,\ga/2}(\mathbb R^2)}
=B<\nf$.
Note that $\si(2^k\cdot)\wh\Psi\in L^r_{\ga}(\bbr^2)$
uniformly in $k$
by Proposition~\ref{C1}, so they are  also in 
$L^r_{\ga/2,\ga/2}(\bbr^2)$ uniformly
due to that $L^r_\ga(\bbr^2)\subset L^r_{\ga/2,\ga/2}(\bbr^2)$.
We complete the proof of Proposition \ref{th7.1}.
\end{proof}

\bigskip

 \end{document}